\newcommand{\calA}{\mathcal{A}}
\newcommand{\Sigmacol}{\Sigma_{\varepsilon_{\cdot j}}}
\newcommand{\Sigmarow}{\Sigma_{\varepsilon_{i \cdot}}}
\newcommand{\Sigmanu}{\Sigma_{\nu}}
\newcommand{\Nu}{\mathcal N}
\newcommand{\Tau}{\mathcal T}
\newcommand{\F}{\mathcal F}
\newcommand{\htheta}{\hat\theta}
\newcommand{\stheta}{\tilde\theta}
\newcommand{\sTheta}{\Theta^{\star}}
\newcommand{\bsTheta}{\bar\Theta^{\star}}
\newcommand{\R}{\mathbb{R}}
\newtheorem{theorem}{Theorem}
\newtheorem{lemma}[theorem]{Lemma}
\newtheorem{proposition}[theorem]{Proposition}
\newtheorem{corollary}[theorem]{Corollary}
\theoremstyle{definition}
\newtheorem{remark}{Remark}
\newtheorem{example}{Example}
\newcommand{\excess}{\mathcal L}
\DeclareMathOperator{\trace}{trace}
\DeclareMathOperator*{\argmin}{argmin}
\DeclareMathOperator{\rank}{rank}
\DeclareMathOperator{\width}{w}
\DeclareMathOperator{\rad}{rad}
\DeclareMathOperator{\diam}{diam}
\DeclareMathOperator{\var}{Var}
\DeclareMathOperator{\E}{E}
\DeclarePairedDelimiter{\p}{(}{)}
\DeclarePairedDelimiter{\sqb}{[}{]}
\DeclarePairedDelimiter\set{\{}{\}}
\DeclarePairedDelimiter\norm{\lVert}{\rVert}
\DeclarePairedDelimiter\inner{\langle}{\rangle}
\DeclarePairedDelimiter\abs{\lvert}{\rvert}
\DeclarePairedDelimiter{\smallabs}{\lvert}{\rvert}
\DeclarePairedDelimiter{\floor}{\lfloor}{\rfloor}
\DeclarePairedDelimiter{\ceil}{\lceil}{\rceil}
\author{David A. Hirshberg}
\title{Least Squares with Error in Variables}
\date{Stanford University}
\begin{document}
\maketitle

\begin{abstract}
Error-in-variables regression is a common ingredient in treatment effect estimators using panel data.
This includes synthetic control estimators, counterfactual time series forecasting estimators, and combinations.
We study high-dimensional least squares with correlated error-in-variables with a focus on these uses.
We use our results to derive conditions under which the synthetic control estimator 
is asymptotically unbiased and normal with estimable variance, permitting inference 
without assuming time-stationarity, unit-exchangeability, or the absence of weak factors.
These results hold in an asymptotic regime in which the number of pre-treatment periods goes to infinity
and the number of control units can be much larger $(p \gg n)$.
\end{abstract}

\section{Introduction}
Constrained least squares regression is widely used in the analysis of panel data. 
While these tend to be high dimensional regressions, often with comparable numbers of variables $p$ and equations $n$,
panel data is typically assumed to have properties that are atypical in the study of high dimensional regression.
In particular, the panel is often assumed to be the sum of a low rank matrix and a matrix of idiosyncratic noise, 
the latter being mean zero and independent unit-to-unit,
\citep[e.g.,][]{abadie2010synthetic, athey2017matrix, bai2009panel}.
This is based on the intuition that units will mostly follow a mixture of relatively few trends over time; characterized
their blend of industries, environments, cultures, etc.; with the remainder of what we observe 
approximable by idiosyncratic noise. We explore high-dimensional least squares with such data,
which does not satisfy the restricted-eigenvalue-type conditions \citep[see e.g.,][]{bickel2009simultaneous,candes2007dantzig} 
that lead to fast, sparsity-dependent rates.
Our goal is to understand two common approaches to estimating treatment effects:
those based on counterfactual time series forecasting and synthetic controls.

Concretely, we consider a setting in which we observe an $n \times p$ matrix $X = A + \varepsilon$ and
a vector $y = b + \nu$ where $A$ and $b$ are deterministic and either the rows or the columns of the $n \times (p+1)$ matrix 
$[\varepsilon, \nu]$ are independent subgaussian vectors. Our goal is to estimate a linear prediction model, i.e., $(\theta_0,\theta)$ such that 
$y \approx \theta_0 + X \theta$. This model can be used to estimate a treatment effect using panel data as follows.

\paragraph{The Counterfactual Forecasting Method.}
Suppose that the rows of $[X, y]$ represent the independent trajectories of a set of control units,
with $X$ containing the pre-treatment trajectories and $y$ outcomes during a single post-treatment period.\footnotemark 
Then we estimate a map  $x \to \hat\theta_0 + x' \hat \theta$ that predicts post-treatment outcomes, absent exposure, of a unit with the pre-treatment trajectory $x$. 
By applying this to the pre-treatment trajectory $x_e$ of an exposed unit, we impute the potential outcome $\hat y_e(0) = \hat\theta_0 + x_e' \hat \theta$ 
that we'd have observed post-treatment if, contrary to fact, it had not been exposed. The difference $\hat \tau = y_e - \hat y_e(0)$ between this 
and its observed post-treatment outcome $y_e$ estimates the effect of exposure on that unit.

\footnotetext{If we observe outcomes during multiple post-treatment periods, 
we can estimate a weighted average treatment effect over these periods by taking
$y$ to be the corresponding weighted average of post-treatment observations.}

\paragraph{The Synthetic Control Method.}
Suppose that the columns of $[X,y]$ represent the independent pre-treatment trajectories of a set of units,
with $X$ containing the trajectories of control units and $y$ that of a single exposed unit.\footnotemark
Then we estimate a map  $x \to \hat \theta_0 + x' \hat \theta$ that predicts the exposed unit's value, absent exposure, when the vector of control units takes the value $x$.
By applying this to the post-treatment values $x_e$ of the control units, we impute the potential outcome $\hat y_e(0) = \hat \theta_0 + x_e' \hat \theta'$
that we'd have observed post-treatment if, contrary to fact, it had not been exposed. 
The difference $\hat \tau = y_e - \hat y_e(0)$ between this and its observed post-treatment outcome $y_e$ estimates the effect of exposure on that unit.

\footnotetext{If we observe outcomes for multiple exposed units, 
we can estimate a weighted average treatment effect over these units by taking
$y$ to be the corresponding weighted average of pre-treatment trajectories for those units.}

\ 
%\citep[e.g.,][]{abadie2010synthetic,ben2018augmented,doudchenko2016balancing,ferman2019synthetic}.

%Least squares is widely used to estimate these maps, often subject to constraints on $\theta_0$ and $\theta$.
%For example, the synthetic control method of \citet{abadie2010synthetic}
%uses a weighted average of control units' outcomes as the synthetic control, 
%fixing $\theta_0=0$ and choosing $\theta$ by least squares from the set of nonnegative weights summing to one.
%The use of Tikhonov-regularized least squares, with or without constraint and intercept, has been proposed in follow-up work \citep[e.g.,][]{ben2018augmented, doudchenko2016balancing}.
%In Sections~\ref{sec:least-squares} and \ref{sec:sketch}, we characterize these least squares estimators. 
%We return to treatment effect estimation in Section~\ref{sec:treatment-effects},
%where we will use this characterization to discuss the synthetic control estimator. 
%Results for the counterfactual forecasting estimator can be derived analogously.

Our approach to the analysis of these methods is straightforward. 
\begin{enumerate}
\item We show that the estimated map $(\htheta_0,\htheta)$ concentrates around a deterministic limit $(\stheta_0, \stheta)$.
In particular, we bound both the prediction error $\norm{A\htheta + \htheta_0 - A\stheta - \stheta_0}$
and the coefficient error $\norm{\htheta - \stheta}$.
\item Using this, we bound the deviation of our estimator $\hat \tau$ 
from an oracle estimator $\tilde\tau$ that uses this limiting map:
$\hat \tau - \tilde\tau = x_e' (\tilde\theta - \hat\theta)$. Decomposing $x_e$ into components respectively spanned by
and orthogonal to the columns of $A$, we can bound the term involving the first in terms of prediction error and the one involving the second in terms of coefficient error.
\item Finally, we characterize the oracle estimator $\tilde\tau$. As it is a simple
linear function of the exposed observations, $\tilde\tau = y_e - x_e' \tilde\theta$,
its error is straightforwardly decomposable into a deterministic bias component and a mean zero noise component.
\end{enumerate}
If the  bias component and the oracle difference $\hat \tau - \tilde \tau$ are negligible relative to the oracle noise component, 
then inference tends to be relatively straightforward. While, as usual, it is difficult to know how much bias there is,
our theory offers some heuristics.

After a brief discussion of related work, we will carry out this first step in Sections~\ref{sec:least-squares} and \ref{sec:sketch} with a fair amount of generality.
We return to treatment effect estimation in Section~\ref{sec:treatment-effects},
where we will proceed with the remaining steps for the synthetic control method.
Results for counterfactual forecasting can be derived analogously.
% We conclude by discussing inference for SC in practice.

\section{Related Work}
\subsection{High-dimensional regression with error-in-variables}

Many generalizations of the lasso for error-in-variables regression have been studied 
in the high dimensional statistics literature \citep[e.g.,][]{datta2017cocolasso, loh2011high, rosenbaum2013improved}.
In most of this work, the signal matrix $A$ is assumed to satisfy some generalization of the restricted eigenvalue condition, i.e.,
it is assumed that $A$ is invertible as a map on approximately sparse vectors $\delta$ in the sense that
$\norm{A\delta}$ is small only if $\norm{\delta}$ is small.
When this assumption holds, these methods can achieve a 
sparsity dependent \emph{fast rate} analogous to those achieved by the lasso without error-in-variables.
The essential challenge is to find such an approximately sparse solution 
despite the sparsity-discouraging \emph{implicit} Tikhonov regularization that arises from the error in variables ---
note that $\E \norm{X\theta - y}^2 = \norm{A\theta - b}^2 + \E \norm{\varepsilon \theta - \nu}^2$,
and when $\E[\nu \mid \varepsilon] = 0$, the latter term is the Tikhonov penalty $n\norm{\Sigmarow^{1/2}\theta}^2$ 
where $\Sigmarow=\E \varepsilon' \varepsilon/n$.
The essence of the solution is to estimate and subtract off this Tikhonov penality,
for example by minimizing $\norm{X\theta - b}^2 - n\norm{\hat \Sigma \theta}^2$ for an estimate $\hat\Sigma$ of $\Sigmarow$.

However, while it is well-known that (absent error in variables) the lasso is a sensible estimator with or without a restricted eigenvalue condition, 
achieving fast rates with and an often-acceptable \emph{slow rate} without, 
these generalizations tend not to be studied without.  The same goes for the lasso itself with error in variables.
That is what we do here, studying the constrained minimizer of  $\norm{X\theta - b}^2 + n(\eta^2 - 1)\norm{\Sigmarow^{1/2} \theta}^2$ for $\eta>0$.
For $\eta$ near zero, this is least squares with the aforementioned correction for the implicit Tikhonov penalty; 
for $\eta=1$ and $\eta > 1$ this is least squares uncorrected and with additional Tikhonov regularization respectively.
Without a carefully designed signal matrix $A$, some degree of regularization is crucial: while choosing $\eta$ near zero allows us 
fast rates when there is a sparse solution and a restricted eigenvalue condition is satisfied, our slow-rate analysis 
shows that this choice is problematic when this is not the case.

%While we do not ... directly, we use a variety of techniques ... that are used there;
%We take a direct approach focused on characterization of weights. Using techniques from the literature on empirical processes and high dimensional probability (Rahout \& Mendelson, Mendelson, Liaw, etc), we establish assumption-light slow-rate bounds for high dimensional least squares and ridge. Using these, we establish a more robust and flexible inferential framework for these widely used estimators based on normal approximation. 6. (maybe: seems tacky). This is a step toward a deeper relationship between high dim and panel results. Though we focus on weight estimators currently used in panel data, the inferential framework relies on a few simple characterizations of the weights, and could be used with novel (e.g. nonconvex) weight estimators  or theory adapting ideas from the high-dim literature [e.g. methods for multiplicative noise, estimated covariance in ridge].

\subsection{Inference in Panel Data}
While the synthetic control and counterfactual forecasting methods are widely considered sensible as point estimators,
 there is little consensus on how to do inference. Much of extant inferential theory requires
invariants like exchangeability of units, like the unit permutation test proposed by \citet{abadie2010synthetic},
or stationarity over time, like the time permutation test and t-test for bias-corrected synthetic control 
proposed by \citet{chernozhukov2017exact} and \citet{chernozhukov2018practical} respectively.
\citet{li2020statistical} characterizes the asymptotic distribution of the synthetic control 
estimator when the number of control units ($p$) is fixed and the number of time periods ($n$) goes to infinity
with comparable numbers of pre and post-treatment periods and proposes a subsampling approach to inference
that is asymptotically exact.
\citet{cattaneo2019prediction} bounds the difference between 
a synthetic control estimator and a related oracle estimator by 
a quantity that is approximated by the supremum of a certain gaussian process
for $p = o(n)$, which allows for asymptotically conservative inference when
their oracle performs well. Our treatment of the synthetic control estimator
in Section~\ref{sec:treatment-effects} is complementary, stating conditions 
under which a suitably tuned synthetic control estimator differs
negligibly from another oracle estimator. This result, 
which permits $p$ to be exponentially large relative to $n$,
allows for asymptotically exact inference when this oracle performs well. 
% TODO?: add discussion of how we should do inference for SC to end of paper (us vs cattaneo more or less; regularize or no)
% omitted refs:
%    ARCO : it's not synth --- it's not an EIV model and relies on RIP 
%    other Li papers -- subsumed by later one

\section{Tikhonov-regularized least squares}
\label{sec:least-squares}
\subsection{The Estimator}

We consider the Tikhonov-regularized least squares estimator
\begin{equation}
\label{eq:ridge-general}
\begin{aligned}
 (\hat \theta_0, \hat \theta) &= \argmin_{(\theta_0, \theta) \in \Theta_0 \times \Theta} \ell(\theta_0, \theta) \quad \text{ for } \\
 \ell(\theta_0, \theta) &= \norm{\theta_0 + X\theta - y}^2 + n (\eta^2-1) \norm{\Sigmarow^{1/2} (\theta - \psi)}^2 
\end{aligned}
\end{equation}
where $\eta > 0$, $\Theta \subseteq \R^p$ is convex, $\Theta_0$ is either $\set{0}$ (no intercept) or $\R$,
 $\Sigmarow := n^{-1}\E \varepsilon' \varepsilon$ is the row covariance of the errors in covariates,
and $\psi := \argmin_{z \in \R^p} \E \norm{\nu - \varepsilon z}^2$ is the best linear predictor of the errors in the outcome from the errors in covariates.

This is a natural generalization of the least squares estimator itself. Our Tikhonov penalty is implicit in the correlation structure 
of our errors $[\varepsilon, \nu]$, arising as a term in our mean squared error; taking $\eta \neq 1$ simply scales this implicit penalty up or down.
\begin{align*}
 \E \norm{\theta_0 + X\theta - y}^2 
&= \norm{\theta_0 + A \theta - b}^2 + 2\E (\theta_0 + A\theta - b)' (\epsilon \theta - \nu) + \E \norm{\epsilon \theta - \nu}^2 \\
&= \norm{\theta_0 + A \theta - b}^2 + 0 + n\norm{\Sigmarow^{1/2} (\theta - \psi)}^2 + \E \norm{\epsilon \psi - \nu}^2.  
\end{align*}
It is feasible to do this exactly only if the autoregression vector $\psi$ is known and the covariance matrix $\Sigmarow$ is known up to a constant factor,
for example if the columns of $\varepsilon$ are iid, in which case $\psi$ would be zero and $\Sigma$ would be a multiple of the identity.
With more general covariance structures, we would need to estimate $\psi$ and $\Sigmarow$.

We characterize our estimator by showing that estimator converges 
to the deterministic model we would get by minimizing the expectation of the same loss function,
\begin{equation}
\label{eq:ridge-oracle}
\begin{aligned}
 (\tilde \theta_0, \tilde \theta) &= \argmin_{(\theta_0, \theta) \in \Theta_0 \times \Theta} \E \ell(\theta_0, \theta) \quad \text{ for } \\
     \E \ell(\theta_0, \theta)    &= \norm{\theta_0 + A\theta - b}^2 + n \eta^2 \norm{\Sigmarow^{1/2} (\theta - \psi)}^2 + \E\norm{\varepsilon \psi - \nu}^2.
\end{aligned}
\end{equation}
\noindent 
Our characterization is model-free except in the sense that we will focus on the case that $A$
is approximable by a matrix with relatively small rank. 
A concise version of our result, which holds for either independent rows or columns, follows.

\subsection{Setting}
\label{sec:setting}
We observe an $n \times p$ matrix $X = A + \varepsilon$ and
a vector $y = b + \nu$ where $A$ and $b$ are deterministic and either the rows or the columns of the $n \times (p+1)$ matrix 
$[\varepsilon, \nu]$ are independent mean-zero random vectors.
We define $\htheta$ and $\stheta$ as in \eqref{eq:ridge-general} and \eqref{eq:ridge-oracle} for $\eta \ge 0$.

We say that the noise is \emph{essentially gaussian}
if the bound $K$ defined below in \eqref{eq:more-common-notation} can be taken to be a universal constant $c$, i.e., if
the whitened independent rows or columns of $[\varepsilon,\nu]$ are $c$-subgaussian and, in the case with independent columns $\varepsilon_{\cdot j}$,
the squared norms \smash{$\norm{\varepsilon_{\cdot j}}_2^2$} concentrate around their means like a gaussian vector with the same covariance would.\footnote{This last condition rules out subgaussian vectors with non-gaussian anticoncentration behavior, like $T g$ where $T={-1, 0, 1}$ each with probability $1/3$ and $g$ is an independent gaussian vector.}
We say noise is \emph{essentially spherical} if the maximal and average eigenvalue of the column covariance matrix $\Sigmacol$ are comparable,
i.e., if $\norm{\Sigmacol}/\set{n^{-1}\trace(\Sigmacol)}$ is bounded.

\subsection{Formal Results}

\begin{theorem}
\label{theorem:rate-simplified}
Consider the setting described above with essentially gaussian and spherical noise and either the rows of $[\varepsilon, \nu]$ independent and identically distributed
or the columns of $[\varepsilon, \nu]$ independent with those of $\varepsilon$ identically distributed.
\[ \norm{\Sigmarow^{1/2}(\hat \theta -\stheta)} \le s \ \text{ and  }\ 
   \norm{(\hat \theta_0 - \tilde \theta_0) + A (\hat \theta - \tilde \theta_0)} \le \eta n^{1/2} s \]
with probability $1-c\exp\set{-cu(v,s)}$ if $s$ satisfies the fixed point condition
\begin{equation}
\label{eq:fixed-pt-simplified}
\begin{aligned}
s^2 \ge c\bigg[ &\frac{v^2 \sigma^2\width^2(\sTheta_s)}{\min(\eta_R^2,\eta_R^4) n} + \frac{(v^2 \sigma^2 \rank(A)/p_{eff})^{1,1/2}}{\eta_R^2 n} \\
    +           &\frac{v \sigma \norm{A\stheta + \stheta_0 - b} \width(\sTheta_s) + v \sigma^2 (n/p_{eff})^{1/2} \width(\sTheta_s)}{\eta_R^2 n} \bigg],  \\ 
\eta_R^2 &= \max(0,\ \eta^2 - c \rank(A)/n). 
\end{aligned}
\end{equation}
where $\sTheta_s = \set{\theta - \tilde\theta : \theta \in \Theta,\ \norm{\Sigmarow^{1/2} (\theta - \tilde \theta) \le s}}$
and $x^{1,1/2} = x + x^{1/2}$. Here $u(v,s) = \min\set{v^2\sigma^2 \width^2(\sTheta_s)/s^2,\ v^2 \rank(A),\ n}$ for $v \ge 1$.
The same holds if we substitute for $\rank(A)$ a bound on \emph{approximate rank}: any integer $R$ for which
\[ R \ge c \sigma_{R+1}(A)\width(\sTheta_s)/(s + v \sigma p_{eff}^{-1/2}). \]
\end{theorem}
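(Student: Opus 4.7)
The plan follows the standard basic-inequality scheme, refined by a uniform empirical-process bound on a localized set. Since $\E\ell$ is quadratic in $(\theta_0,\theta)$ with second-order term $\norm{\delta_0+A\delta}^{2}+n\eta^{2}\norm{\Sigmarow^{1/2}\delta}^{2}$, convex optimality of $\stheta$ over $\Theta_0\times\Theta$ yields the curvature bound
\[
 \E\ell(\htheta)-\E\ell(\stheta)\;\ge\;\norm{\hat r}^{2}+n\eta^{2}\norm{\Sigmarow^{1/2}\hdelta}^{2},
\]
with $\hdelta=\htheta-\stheta$ and $\hat r=\hdelta_{0}+A\hdelta$. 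Combined with the basic inequality $\ell(\htheta)\le\ell(\stheta)$, this reduces the theorem to a uniform upper bound on the fluctuation $Z(\stheta)-Z(\stheta+\delta)$, where $Z:=\ell-\E\ell$, over $\delta\in\sTheta_{s}$. A direct expansion, using the identity $\E\norm{\varepsilon\theta-\nu}^{2}=n\norm{\Sigmarow^{1/2}(\theta-\psi)}^{2}+\E\norm{\varepsilon\psi-\nu}^{2}$ coming from the definition of $\psi$, yields
\[
 Z(\stheta+\delta)-Z(\stheta)=2\tilde r'\varepsilon\delta+2(\delta_{0}+A\delta)'\xi+2(\delta_{0}+A\delta)'\varepsilon\delta+2[\xi'\varepsilon\delta-\E\xi'\varepsilon\delta]+[\norm{\varepsilon\delta}^{2}-\E\norm{\varepsilon\delta}^{2}],
\]
with $\tilde r=\stheta_{0}+A\stheta-b$ and $\xi=\varepsilon\stheta-\nu$.

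The five terms are controlled uniformly over $\sTheta_{s}$ by different tools. The linear-in-noise terms $\tilde r'\varepsilon\delta$ and $(\delta_{0}+A\delta)'\xi$ are handled by a generic-chaining / Dudley argument: the former produces the bias-times-width term $v\sigma\norm{\tilde r}\width(\sTheta_{s})$, while for the latter we use that $\hat r$ lies in the $(\rank(A)+1)$-dimensional column span of $[1,A]$, so that $\abs{\xi'\hat r}\le\norm{P_{[1,A]}\xi}\norm{\hat r}$ with $\norm{P_{[1,A]}\xi}\lesssim v\sigma\sqrt{\rank(A)+1}$ by subgaussian concentration, and the $\norm{\hat r}$ factor is absorbed into the curvature via AM--GM. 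The two quadratic-in-noise terms are the delicate ones: in the row-independent case they are handled by Hanson--Wright concentration for the forms $\norm{\varepsilon\delta}^{2}$ and $\xi'\varepsilon\delta$, yielding tails that scale in $\width(\sTheta_{s})$; in the column-independent case one instead needs Gaussian-type concentration of $\norm{\varepsilon_{\cdot j}}^{2}$ about its mean -- this is exactly where \emph{essentially gaussian} and \emph{essentially spherical} both enter -- and this produces the $v\sigma^{2}(n/p_{eff})^{1/2}\width(\sTheta_{s})$ term in the fixed point. The remaining bilinear cross term $(\delta_{0}+A\delta)'\varepsilon\delta$ is handled similarly by a decoupled chaos bound, using that $\hat r$ lives in a low-dimensional subspace to extract the $\sqrt{\rank(A)/p_{eff}}$ factor.

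The loop is closed by a peeling argument. If $s$ satisfies the fixed-point inequality, the uniform bound on $Z(\stheta)-Z(\stheta+\delta)$ over $\{\delta\in\sTheta:\norm{\Sigmarow^{1/2}\delta}=s\}$ is strictly smaller than $\eta_R^{2}s^{2}$, the effective curvature left after the centered quadratic noise terms are absorbed. The adjustment $\eta_R^{2}=\max(0,\eta^{2}-c\,\rank(A)/n)$ arises precisely because $\norm{\varepsilon\delta}^{2}$ can fall short of $n\norm{\Sigmarow^{1/2}\delta}^{2}$ by up to a constant times $\rank(A)\cdot\norm{\Sigmarow^{1/2}\delta}^{2}$ on directions in or close to the column span of $A$, eroding the $n\eta^{2}$ curvature. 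Convexity of $\ell$ then implies that $\hdelta$ cannot reach the boundary of $\sTheta_{s}$, establishing both claimed bounds simultaneously. For the approximate-rank statement, we split $A=A_{R}+(A-A_{R})$ via the SVD and absorb the residual contribution into the Gaussian-width term through the stated condition $R\ge c\sigma_{R+1}(A)\width(\sTheta_{s})/(s+v\sigma p_{eff}^{-1/2})$, which equalizes its contribution with the base rate.

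The main obstacle will be the coupled uniform control of the quadratic noise terms: Hanson--Wright-style concentration must be uniformized over $\sTheta_s$ while simultaneously being tracked against the curvature lower bound, and in the column-independent regime $\norm{\varepsilon\delta}^{2}$ cannot be controlled by subgaussianity alone. This is where \emph{essentially gaussian} and \emph{essentially spherical} are both needed, and where the $(n/p_{eff})^{1/2}$ factor enters the slow-rate fixed point.
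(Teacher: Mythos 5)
Your proposal follows essentially the same route as the paper: extract curvature $\norm{\hat r}^2 + n\eta^2\norm{\Sigmarow^{1/2}\hdelta}^2$ from the first-order optimality of $\stheta$ in $\E\ell$, combine with the basic inequality $\ell(\htheta)\le\ell(\stheta)$, decompose the fluctuation $Z(\stheta+\delta)-Z(\stheta)$ into the same five noise terms, bound each by chaining/Chevet (linear) or decoupled chaos (quadratic), close the loop by a scaling argument on the sphere $\norm{\Sigmarow^{1/2}\delta}=s$, and split $A=A_R+(A-A_R)$ for the approximate-rank extension. You also correctly identify that the ``essentially gaussian'' assumption enters precisely through the concentration of $\norm{\varepsilon_{\cdot j}}^2$ in the column-independent regime — the paper encodes this in a custom decoupling lemma that reduces the non-gaussian quadratic chaos to a gaussian one.

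Two attributions need correcting. First, the $(\sigma^2\rank(A)/p_{eff})^{1,1/2}$ term in the fixed point comes from the low-rank linear term $\xi'\hat r=(\varepsilon\stheta-\nu)'(\delta_0+A\delta)$, not from the bilinear cross term $(\delta_0+A\delta)'\varepsilon\delta$; the factor $p_{eff}^{-1/2}$ is the subgaussian scale of the coordinates of $\xi$, so your bound $\norm{P_{[1,A]}\xi}\lesssim v\sigma\sqrt{\rank(A)+1}$ is missing this factor and should read $\lesssim v\sigma p_{eff}^{-1/2}\sqrt{\rank(A)+1}$ — with that correction the AM--GM step yields exactly the right term. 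Second, the adjustment $\eta_R^2=\max(0,\eta^2-c\rank(A)/n)$ is driven by the cross term $\delta'A'\varepsilon\delta$: Chevet's inequality gives it a contribution on the order of $\phi R^{1/2} r s$, and bounding this over $r$ forces a term proportional to $Rs^2$ to be subtracted from the $n\eta^2 s^2$ curvature budget. It is not a deficit of $\norm{\varepsilon\delta}^2$ relative to its mean — that fluctuation is direction-agnostic (independent of $A$) and is instead absorbed into the $\sigma^2\width^2(\sTheta_s)$ term. The remaining gap is that you flag, but do not resolve, the uniformization of the quadratic chaos under the non-gaussian column model; the paper handles this with a decoupling lemma tailored to the norm-concentration hypothesis and a Dirksen-style chaining argument, and in the row-independent case it uses Mendelson's multiplier-process bound rather than Hanson--Wright, which is sharper for this localized uniform control.
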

Here and throughout, each instance of $c$ will denote a potentially different universal constant; $\width(S)$ can be any bound
on the gaussian width of the set $S$; and $\sigma_1(M), \sigma_2(M), \ldots$ will be the decreasing sequence of singular values of $M$ 
with $\sigma_k(M)=0$ for $k > \rank(M)$; and we make the following definitions in terms of the row and column covariance matrices
$\Sigmarow = n^{-1}\E\varepsilon' \varepsilon$ and $\Sigmacol = p^{-1}\E \varepsilon \varepsilon'$
and $\Sigmanu = \E \nu \nu'$. Furthermore, we write $\sigma$ and $p_{eff}$ to denote a noise level and effective sample size,
with definitions depending on whether the rows or columns of $\varepsilon$ are independent.
\begin{equation*}
\label{eq:common-notation}
\begin{array}{l||l|l}
                       & \text{independent rows} 
		       & \text{independent columns} \\
\hline
 \sigma^2              &  \norm{\Sigmarow} 
		       &  \norm{\Sigmacol} \\
 p_{eff}^{-1/2}        &  \norm{\Sigmarow}^{-1/2}\set{\norm{\Sigmarow^{1/2}(\stheta - \psi)} + \norm{\varepsilon_{i\cdot} \psi - \nu_i}_{L_2}}
		       &  \norm{\stheta} + \norm{\Sigmacol}^{-1/2}\norm{\Sigmanu}^{1/2} \\
\end{array}
\end{equation*}
Theorem~\ref{theorem:rate-general}, a refined version of this theorem, is stated and proven in the appendix. 
It captures behavior when noise is highly correlated and non-gaussian more precisely.

We will interpret this result for several examples of the constraint set $\Theta$. 
In this setting, the effective sample size $p_{eff}$ essentially
measures the scale of the outcome errors $\nu$ and oracle-weighted errors in covariates $\varepsilon \stheta$, whichever is larger,
in units of how many independent copies of $\varepsilon_{ij}$ we would need to average to get a random variable with the same variance.
We will assume that rank or approximate rank is not is not extremely large ($\eta_R \approx \eta$),
that we use either positive regularization or none ($\eta \ge 1$), and consider bounds for constant order $v$.

\begin{example}[The unit $\ell_1$-ball]
\label{example:l1}
When $\Theta$ is a subset of the unit $\ell_1$-ball in $\R^p$, 
the gaussian width of $\sTheta_s$ is small irrespective of radius $s$ and essentially irrespective of dimension $p$:
$\width(\sTheta_s) \le \width(\Theta) \le c\sqrt{\log(p)}$. Substituting this upper bound reduces the fixed point condition
defining $s$ to a simple inequality.
\[ s^2 \ge \frac{c}{\eta^2 n}\sqb*{ \sigma^2\log(p) + \set{\sigma \norm{A\stheta + \stheta_0 - b} + \sigma^2(n/p_{eff})^{1/2}}\sqrt{\log(p)} + 
    (\sigma^2 \rank(A)/p_{eff})^{1,1/2}}. \]
We solve this for an essentially fourth root rate, $s = c \sigma \eta^{-1} (n \cdot p_{eff})^{-1/4}\sqrt{\log(p)}$,
if the oracle weights fit reasonably well and the signal is not too complex in the sense that
\smash{$\norm{A\stheta + \stheta_0 - b} \le c\sigma (n/p_{eff})^{1/2}$} and 
\smash{$\rank(A) \le \min\set{(n \cdot p_{eff})^{1/2}\sqrt{\log(p)}, \sigma^2 n \log(p)}$}.
\end{example}

\begin{example}[Euclidean space $\R^p$]
\label{example:euclidean}
When $\Theta \subseteq \R^p$, $\width(\sTheta_s) \le c s\sqrt{p}$. Substituting this bound, we characterize our rate of convergence via the following
fixed point condition.
\[ 
s^2 \ge \frac{c}{\eta^2 n} \sqb*{ \sigma^2 s^2 p + \set{\sigma \norm{A\stheta + \stheta_0 - b} + \sigma^2 (n/p_{eff})^{1/2}}sp^{1/2}
+ \set{\sigma^2 \rank(A)/p_{eff}}^{1,1/2}}. \]
If $p \ge c\eta^2 n/ \sigma^2$, this is not solvable, so our theorem implies no bound. 
Otherwise, we get a variant of the familiar
$\sqrt{p/n}$ rate, $s = c\sigma^2\eta^{-2} \sqrt{p / (n \cdot p_{eff})}$, 
if the oracle weights fit reasonably well and the signal is not too complex in the sense that
$\norm{A\stheta + \stheta_0 - b} \le c\sigma (n/p_{eff})^{1/2}$ and 
$\rank(A) \le \min(\eta^{-2}p, \ \sigma^2  p_{eff}^{-1} \eta^{-4}p^2)$.
\end{example}
%\noindent When $\Theta$ is a subset of the unit $\ell_1$-ball in $\R^p$,
%both bounds apply, and when $p$ is relatively small, the second can be sharper.
%We can refine both by using the sharper bound 
%$\width(\sTheta_s) \le c \set{\sqrt{\log(p)} \vee s\sqrt{p}}$. 

\begin{remark}
\label{remark:low-noise}
The discussion of examples aboves focuses on the high-noise regime, 
in which a term proportional to $\sigma^2$ dominates the right side of our fixed point conditions above.
In the low noise regime, in which $\sigma$ is close to zero, behavior is different:
for small enough $\sigma$, the fixed point condition from Example~\ref{example:euclidean} is approximated by the simplified condition
$s^2 \ge (c/\eta^2 n)\set{\sigma^2 \rank(A)/p_{eff}}^{1/2}$,
which implies a near-$n^{-1/2}$ rate with weak dependence on rank.
\end{remark}

\begin{remark}
\label{remark:rank-dependence}
These bounds do not require that $A$ be exactly rank-degenerate,
as we can substitute for $\rank(A)$ an approximate rank as described in Theorem~\ref{theorem:rate-simplified}.
For the rank bound discussed in Example~\ref{example:l1} to hold
in the high-noise regime $\sigma^2 \ge (p_{eff} / n)^{1/2}$,
it suffices that \smash{$c\sigma_{\ceil{(n \cdot p_{eff})^{1/2}}}(A) \le \sigma n^{1/2}$}.
For comparison, in the case that $p$ and $n$ are comparable, 
the largest singular value of $\varepsilon$
will be on the order of \smash{$\sigma n^{1/2}$}, so this allows $A$ to have 
\smash{$(n \cdot p_{eff})^{1/2}$} principal components that are `visible' above the noise level
and then arbitrarily many `blending in' at or below it.
\end{remark}

To develop intuition for this result, we will now sketch its proof. The complete proof is deferred to the appendix.

\section{Proof Sketch}
\label{sec:sketch}

We sketch our argument in the simplified case of regression without intercept, i.e., taking $\theta_0 = 0$ above. 
We begin by considering the first order optimality conditions for $\tilde \theta$, which imply that
\begin{equation}
\label{eq:projection-theorem-sketch}
 0 \le (A\tilde\theta - b)'A(\theta - \tilde\theta) + n\eta^2 (\tilde\theta - \psi) \Sigmarow (\theta - \tilde\theta) \ \text{ for all }\ \theta \in \Theta.
\end{equation}
This generalizes the more familiar result for least squares projections $(\eta = 0)$, in which we have only the first term on the right,
to the case of Tikhonov regularized projections \citep[see Sections 3.3-3.5]{peypouquet2015convex}.

To characterize $\htheta$, we use the property that %$\ell(\htheta) \le \ell(\tilde\theta)$ and therefore
$\excess(\htheta - \stheta) \le 0$ for $\excess(\delta) := \ell(\delta + \stheta) - \ell(\stheta)$
and therefore, if $\excess(\delta) > 0$ except on some set $\Theta^{\star}$, then $\htheta - \stheta \in \Theta^{\star}$.
Thus, taking $\sTheta$ to be a set on which bounds like those in Theorem~\ref{theorem:rate-simplified} hold,
it suffices to bound $\excess$ above zero on its complement. We begin with some arithmetic.
\begin{equation}
\label{eq:basic-ineq-sketch}
\begin{aligned}
\excess(\delta) &= \norm{X(\delta + \stheta) - y}^2 -  \norm{X\stheta - y}^2 \\ 
&+ n (\eta^2-1) (\norm{\Sigmarow^{1/2}(\delta + \stheta-\psi)}^2 - \norm{\Sigmarow^{1/2}(\stheta-\psi)}^2)  \\
  &=  \norm{X\delta}^2 + n(\eta^2-1) \norm{\Sigmarow^{1/2}\delta}^2 + 2[(X\stheta - y)' X \delta  + n(\eta^2-1) (\stheta-\psi)'\Sigmarow \delta]  \\
  &= \norm{X\delta}^2 +  n(\eta^2-1) \norm{\Sigmarow^{1/2}\delta}^2 + 2[ (A\stheta - b)' A \delta + n\eta^2 (\stheta - \psi)'\Sigmarow \delta ] \\
&+ 2(\varepsilon\stheta - \nu)' A \delta + 2(A\stheta - b)' \varepsilon \delta  
  + 2(\varepsilon\psi - \nu)' \varepsilon \delta  + 2 (\stheta - \psi)'(\varepsilon'\varepsilon - n\Sigmarow)\delta.
\end{aligned}
\end{equation}
The last expression uses a fairly elaborate decomposition of the bracketed term in the preceding expression.
In this decomposition, the bracketed term is nonnegative via \eqref{eq:projection-theorem-sketch}
and all terms in the second line have mean zero --- the penultimate term because $\varepsilon\psi$ is the 
orthogonal projection of $\nu$ on the subspace $\set{ \varepsilon x : x \in \R^p}$. % \citep[Theorem 1.3.7]{peypouquet2015convex}.
Furthermore, as this implies that $(\varepsilon\psi - \nu)' \varepsilon \delta  + (\stheta - \psi)'(\varepsilon'\varepsilon)\delta = 
(\varepsilon \stheta - \nu)'\varepsilon \delta$ has mean $(\stheta - \psi)'n\Sigmarow\delta$,
we can write the sum of the last two terms as $2[(\varepsilon\stheta - \nu)' \varepsilon - \E (\varepsilon\stheta - \nu)' \varepsilon] \delta$.
Thus, 
\begin{equation}
\label{eq:excess-sketch}
\begin{aligned} 
\excess(\delta) &\ge \norm{X\delta}^2 +  n(\eta^2-1) \norm{\Sigmarow^{1/2}\delta}^2 \\
&+  2(\varepsilon\stheta - \nu)' A \delta + 2(A\stheta - b)' \varepsilon \delta 
 + 2[(\varepsilon\stheta - \nu)' \varepsilon - \E (\varepsilon\stheta - \nu)' \varepsilon] \delta.
\end{aligned}
\end{equation}
As \smash{$\norm{X\delta}^2$} concentrates around its mean \smash{$\E \norm{X\delta}^2 = \norm{A \delta}^2 + n\norm{\Sigmarow^{1/2}\delta}^2$,} 
roughly speaking $\excess(\delta) > 0$ when the mean \smash{$\norm{A \delta}^2 + n\eta^2\norm{\Sigmarow^{1/2}\delta}^2$} of the first line 
exceeds the noise process on the second. Thus, we will show that with high probability, $\excess(\delta) \ge 0$ except for $\delta$ in 
a set \smash{$\sTheta=\set{ \delta : \norm{A\delta} \le r, \norm{\Sigmarow^{1/2}\delta} \le s}$} on which the first line's mean is small.

The first of the three noise terms on the second line of \eqref{eq:excess-sketch} is qualitatively similar to what we see without
error in covariates (when $\varepsilon = 0$). It will be small uniformly over $\delta$ in a set of local deviations $\sTheta$
if the image $A\sTheta$ is relatively small, e.g., when it is low dimensional because $A$ is low rank.
The second will be small when the oracle prediction error $\norm{A\stheta - b}$ is small, 
which will again tend to be the case when $A$ is low rank;
in this case, the subspace of solutions $A\theta = b$ is large,
so the constraint $\theta \in \Theta$ and the Tikhonov penalty tend not to push $\stheta$ away from it.
The third, on the other hand, is unrelated to $A$. For intuition about its behavior, 
it is useful to consider a \emph{decoupled} variant, $2(\varepsilon\stheta - \nu)' \tilde \varepsilon\delta$,
in which $\tilde \varepsilon$ is distributed like $\varepsilon$ but independent of $\varepsilon$ and  $\nu$.
If noise were gaussian, this term and our decoupled variant satisfy essentially the same tail bounds \citep[Theoreom 4.2.7]{de2012decoupling},
and for our purposes they behave similarly enough in the essentially gaussian case as well. 
Then conditional on $\varepsilon$ and $\nu$, this would be the inner product  
$2z'\Sigmarow^{1/2}\delta$ where \smash{$z = (\varepsilon\stheta - \nu)' \tilde \varepsilon\ \Sigmarow^{-1/2}$} is a subgaussian vector.
When the rows of $[\varepsilon,\nu]$ are iid and gaussian, this has a particularly simple form:
the right side of \eqref{eq:excess-sketch} is approximately
\begin{equation}
\label{eq:excess-sketch-approx}
\begin{aligned} 
\tilde \excess(\delta) &:= \norm{A\delta}^2 +  n\eta^2 \norm{\Sigmarow^{1/2}\delta}^2 \\ 
&+  2V_{eff}^{1/2} g_1' A \delta + 2\norm{A\stheta - b}g_2' \Sigmarow^{1/2} \delta + 2  V_{eff}^{1/2} n^{1/2} g_3' \Sigmarow^{1/2}\delta 
\end{aligned}
\end{equation}
where $V_{eff} = \E\norm {(\varepsilon_{i\cdot}\stheta - \nu_i)}^2$ and $g_1 \ldots g_3 \in \R^n$ are standard gaussian vectors,
\begin{align*}
g_1 &= V_{eff}^{-1/2}(\varepsilon\stheta - \nu), \\
g_2 &= [(A\stheta - b)'/\norm{A\stheta - b}] \varepsilon \Sigmarow^{-1/2}, \\ 
g_3 &\approx (\E\norm{\varepsilon \stheta - \nu}^2)^{-1/2} z = V_{eff}^{-1/2} n^{-1/2} z.
\end{align*}
For the remainder of our sketch, we'll work with $\tilde \excess(\delta)$ 
as if it were a uniform lower bound on $\excess(\delta)$, showing that
$\tilde\excess(\delta) > 0$ unless $\delta \in \sTheta$.
While the details are slightly different when $[\varepsilon, \nu]$ is not gaussian 
and/or has independent columns rather than rows,  
the intuition we'll develop here is essentially valid in the general case we consider.

\subsection{Bounding $\norm{\Sigmarow^{1/2}(\htheta - \stheta)}$}

If there were no signal ($A=0,b=0$) then the condition $\tilde \excess(\delta) > 0$ 
would reduce to a simple ratio process bound,
\[ -2V_{eff}^{1/2}n^{1/2} g_3'\Sigmarow^{1/2}\delta\ /\ \norm{\Sigmarow^{1/2}\delta}^2 <   n \eta^2. \]
Thus, the possibility that $\norm{\Sigmarow^{1/2}(\hat \theta - \theta)}=s$ is ruled out if
\[ 2V_{eff}^{1/2}n^{1/2}\sup_{\delta \in \Theta^{\circ}_s} -g_3' \Sigmarow^{1/2}\delta <    n \eta^2 s^2 \ \text{ where }\ 
\Theta^{\circ}_s := \set{ \theta - \stheta : \theta \in \Theta, \norm{\Sigmarow^{1/2}(\theta - \stheta)} = s}.
 \]
As the supremum on the left side will concentrate around its mean, the gaussian width \smash{$\width(\Sigmarow^{1/2}\Theta^{\circ}_s)$}, 
we should expect to rule out this possibility for all 
\smash{$s > s^{\star} := \min\{s > 0 :$}\ \smash{ $2V_{eff}^{1/2} \width(\Theta^{\circ}_s) \le n^{1/2} \eta^2 s^2 \}$.}
Thus, we should expect that \smash{$\htheta - \stheta \in \Theta^{\star}_{s^{\star}} := \cup_{s \le s^{\star}} \Theta_s^{\circ}$}. 
This is essentially what we see in Theorem~\ref{theorem:rate-simplified}.

The addition of signal, unless it is carefully designed, does little to improve this rate of convergence $s^{\star}$.
The reason for this is that the term $\norm{A\delta}^2$ helps rule out the possibility that $\tilde\excess(\delta) \le 0$ 
for certain deviations $\delta$, but does not for others, e.g. those within the kernel of $A$ or more generally in subspaces on which $A$ has small operator norm.
If the aforementioned ratio process approximately achieves its maximum for one of these deviations that is not ruled out, 
then the implied rate $s^{\star}$ does not improve. While the design of signals that drive convergence of $\Sigmarow^{1/2}\htheta$ to $\Sigmarow^{1/2}\theta$,
e.g. those satisfying a restricted eigenvalue condition, 
is a widely studied topic in high dimensional statistics, we will not consider signals with this property here.
Thus, when we are interested in convergence in this sense, the presence of signal typically
just saddles us with additional noise.

We'll now consider the negative influence of the noise terms in \eqref{eq:excess-sketch-approx}
when there is signal. We'll show that, by allocating fractions of our second signal term \smash{$\eta^2 n \norm{\Sigmarow^{1/2}\delta}^2$},
we can cancel out the potentially negative influence of each noise term for all $\theta \in \Theta_s^\circ$.
Call these fractions $\alpha_1\ldots\alpha_3$ (with $\alpha_1 + \alpha_2 + \alpha_3 = 1$).
\begin{enumerate}
\item Consider the sum of the first signal term and first noise term,
\smash{$\norm{A\delta}^2 + 2V_{eff}^{1/2} g_1' A \delta$},
for $\delta$ with $\norm{A\delta}=r$. This is no smaller than
$r^2 - 2V_{eff}^{1/2}\sup_{\delta} -g_1' A \delta$ for $\delta$ ranging 
over the set of $\delta \in \Theta^\circ_s$ with $\norm{A\delta}=r$.
Moreover, this supremum concentrates around its mean,
the gaussian width of the set $\set{A\delta : \delta \in \Theta^\circ_s, \norm{A\delta}=r}$.
This is no larger than $cr\rank^{1/2}(A)$, as it is contained in the ball of radius $r$ in 
the $\rank(A)$-dimensional image of $A$. Thus, this sum is essentially at least 
\[ \min_{r} r^2 -  2cV_{eff}^{1/2}\rank^{1/2}(A) r = -[cV_{eff}^{1/2}\rank^{1/2}(A)]^2, \]
and it is cancelled if $\alpha_1 n\eta^2 s^2$ exceeds the magnitude of this quantity.
\item The second noise term is cancelled if 
\smash{$2\norm{A\stheta - b} \sup_{\delta \in \Theta^{\circ}_s} -g_3'\Sigmarow^{1/2}\delta \le$} $\alpha_2n\eta^2 s^2$.
And as the supremum here concentrates around its mean, the gaussian width of \smash{$\Sigmarow^{1/2}\Theta^\circ_s$},
this is equivalent to the condition \smash{$ 2\norm{A\stheta - b} \width(\Sigmarow^{1/2}\Theta^\circ_s)$} $\le \alpha_2 n\eta^2 s^2$.
\item
The third noise term, discussed in the no-signal case,
is cancelled by its fraction of signal if $2V_{eff}^{1/2}n^{1/2}\width(\Sigmarow^{1/2}\Theta^{\circ}_s) \le \alpha_3 n\eta^2 s^2$.
\end{enumerate}
Summing, the negative influence of our noise terms is essentially canceled if 
\begin{equation}
\label{eq:sketch-s-fp} 
n\eta^2 s^2 \ge cV_{eff} \rank(A)
+2\norm{A\stheta - b}\width(\Sigmarow^{1/2}\Theta^\circ_s) 
+2V_{eff}^{1/2}n^{1/2}\width(\Sigmarow^{1/2}\Theta^{\circ}_s). 
\end{equation}
As $V_{eff} \approx \sigma^2/p_{eff}$, this is essentially the fixed point condition from Theorem~\ref{theorem:rate-simplified}.\footnote{It is an even better match for the refined fixed point 
condition in Theorem~\ref{theorem:rate-general}. Differences between this condition and Theorem~\ref{theorem:rate-simplified} occur mostly where we've simplified
Theorem~\ref{theorem:rate-general} for interpretability.} 
%Details:
%First, we have \smash{$\width(\Sigmarow^{1/2}\Theta_s^{\circ})$}
%in place of the larger quantity $\sigma \width(\sTheta_s)$, which was used above to simplify notation;
%our refined result Theorem~\ref{theorem:rate-general} uses a variant of the smaller.
%Second, we have $\eta$ in place of $\eta_R$; these are essentially the same 
%except when $A$, like a matrix of gaussian noise, 
%cannot be approximated by a meaningfully rank-denerate matrix $(\rank \ll n)$.
%Third, we have dropped the first term and the $1/2$ power of the second from our fixed point condition in Theorem~\ref{theorem:rate-simplified}.
%The first is typically negligible  (it is, for example, essentially irrelevant in our examples above) 
%and the second, typically negligible except in the low-noise regime, 
%is not required to establish the bound $\norm{\Sigma^{1/2}(\htheta - \theta)} \le s$; 
%Theorem~\ref{theorem:rate-general} clarifies that its inclusion allows us to use the simple bound $\norm{A(\htheta - \theta)} \le \eta^{-1} n^{1/2} s$
%for the same $s$\footnote{TODO: update the statement of the latter theorem to do so}.
%while useful in that it allows us to state a simpl
%and the second 
%the second, which
%This term is typically negligible 
%and have $\eta$ in place of $\eta_R$. TThese are usually negligible.

Our formal proof differs largely in that it has to deal with two source of additional complexity.
The first is the difference between $\norm{X\delta}$ and  $\E \norm{X \delta}$, which we've ignored above.
This is done using techniques relatively similar to those we've used above.
The second is showing that these conclusions, which we've talked about for one $(s,r)$ pair,
hold simultaneously for all (s,r) pairs on a high probability event. To do this,  
show that it suffices that they hold for a single carefully-chosen pair.
This argument exploits the convexity of $\Theta$,
charactering the terms away from this critical pair 
by considering the way they scale when we replace $\delta$ with a scalar multiple $\alpha\delta$.

\subsection{Bounding $\norm{A(\htheta - \stheta)}$}
In this section, we characterize prediction error, i.e., the convergence of $A\htheta \to A\stheta$.
In the previous section, we showed \smash{$\tilde \excess(\delta) > 0$} 
unless \smash{$\norm{\Sigmarow^{1/2} \delta} \le s$}; here we will strengthen this
by adding the condition that $\norm{A\delta} \le r$ for $r=\eta n^{1/2}s$. In particular,
we will rule out the possibility that \smash{$\tilde \excess(\delta) > 0$} 
for $\delta$ with $\norm{\Sigmarow^{1/2} \delta} \le s$ and $\norm{A\delta} = r$.
Bounding terms in $\tilde\excess(\delta)$ essentially as in the previous section, 
$\tilde \excess(\delta) > 0$ for such all $\delta$ if 
\begin{equation}
\label{eq:sketch-r-fp}
r^2 >  2c V_{eff}^{1/2} \rank^{1/2}(A)r + 2\norm{A\stheta - b}\width(\Sigmarow^{1/2}\Theta^{\circ}_{s}) +  V_{eff}^{1/2} n^{1/2}\width(\Sigmarow^{1/2}\Theta^{\circ}_{s}). 
\end{equation}
This holds for $r$ exceeding the larger root of this quadratic. Up to constant factors, this is $\eta n^{1/2}s$ 
for $s$ satisfying the aforementioned lower bound \eqref{eq:sketch-s-fp} with equality.

%\begin{remark}
%\label{rema:restricted-eigenvalue}
%On restricted eigenvalue conditions and $\eta < 1$
%\end{remark}

\begin{remark}
\label{rema:non-identical}
While we assume that either rows or columns of $\varepsilon$ are identically distributed for simplicity, 
we have tried to define some quantities in ways that are informative about what happens more generally.
In particular, our definition of $\Sigmarow$ and $\psi$ do not rely on identical distribution,
nor does our derivation of the loss lower bound \eqref{eq:excess-sketch}. Thus, even without identical distribution we should expect
that the least squares regression coefficents $\hat \theta$ (\ref{eq:ridge-general} with $\eta=1$)
converge to the autoregression coefficients $\psi$.
\end{remark}

We use the bounds derived here to characterize the synthetic control estimator.

\section{The Synthetic Control Estimator}
\label{sec:treatment-effects}

\subsection{Setting}
\label{sec:sc-setting}
To discuss the synthetic control estimator, we'll have to extend our model to include post-treatment outcomes.
In this extended model, we observe an $(n+1) \times (p+1)$ matrix with independent columns, each a time series for a different unit.
The first $p$ units are controls and the last unit is exposed to treatment only in the last time period. As before,
we call the $n \times (p+1)$ submatrix of pre-treatment outcomes $[X\ y]$, and we let $[x_e \ y_e]$ 
be outcomes for the final period. And we assume that treatment affects only the exposed unit 
and does so only after exposure, i.e., that the only difference in what we'd have observed if
treatment had not occurred would be the substitution of a control potential outcome $y_e(0)$ 
for $y_e$. Our goal is to estimate the effect of treatment on the exposed observation, $\tau = y_e - y_e(0)$,
having observed
\begin{equation}
\label{eq:synthdid-model}
 \begin{pmatrix} X   & y \\ x_e' & y_e \end{pmatrix}  
  = \begin{pmatrix} X & y \\ x_e' & y_e(0)+\tau \end{pmatrix}
  = \begin{pmatrix} A & b \\ a_e' & b_e \end{pmatrix} 
  + \begin{pmatrix} 0 & 0 \\ 0 & \tau \end{pmatrix}
  + \begin{pmatrix} \varepsilon & \nu \\ \varepsilon_e' & \nu_e \end{pmatrix}.
\end{equation}
In the last expression, we decompose our observations into a `sytematic component', a `noise component', and the effect of treatment.
We assume that the elements $A$, $b$, $a_e$, and $b_e$ of the systematic componenent are deterministic; 
the columns of $[\varepsilon;\ \varepsilon_{e}]$ of the noise component are 
identically distributed with mean zero; and, for simplicity, that these columns are essentially spherical and gaussian
and that $[\nu;\ \nu_e]$ is distributed like \smash{$p_e^{-1/2} [\varepsilon;\ \varepsilon_e]$}
for $p_e \in \R$.

This can be interpreted as a model for $p+p_{e}$ units with iid noise vectors, 
in which the treated unit's time series $[y;y_e]$ is the average of $p_e \in \mathbb{N}$ independent time series:
\begin{equation}
\label{eq:synthdid-disaggregated}
\begin{aligned} 
\begin{pmatrix} y \\ y_e \end{pmatrix} 
= \frac{1}{p_e}\sum_{j=p+1}^{p+p_e} \begin{pmatrix} Y_{\cdot j} \\ Y_{ej} \end{pmatrix}  
= \frac{1}{p_e}\sum_{j=p+1}^{p+p_e} \sqb*{ \begin{pmatrix} B_{\cdot j} \\ B_{ej} \end{pmatrix} +
				       \begin{pmatrix} 0 \\  \Tau_j          \end{pmatrix} +
				       \begin{pmatrix} \Nu_{\cdot j} \\  \Nu_{ej} \end{pmatrix} }
\end{aligned}
\end{equation}
where the noise vectors $[\Nu_{\cdot j};\ \Nu_{ej}]$ are independent, independent of $[\varepsilon;\ \varepsilon_e]$,
and distributed like the columns of $[\varepsilon;\ \varepsilon_e]$.
In this interpretation, $[b;\ b_e]$, $\tau$, and $[\nu;\ \nu_e]$ are averages of the corresponding terms in \eqref{eq:synthdid-disaggregated}.
While the use of these aggregates is sufficient for point estimation, 
access to the individual series \smash{$[Y_{\cdot j};\ Y_{ej}]$} opens up a number of possibilities
for inference. 
 
%It can also be useful to think of 
%$y_e$ and the elements of $x_e$ as averages of $n_e$ post-treatment periods, 
%in which case we would expect that the standard deviation of $\nu_e$ is
%roughly \smash{$n_e^{-1/2}$} times the standard deviation of a typical element of $\nu$
%and similarly for $\varepsilon_e$ and $\varepsilon$. We will define and use an
%effective sample size parameter $n_e$ like this in our discussion below.

It is common to assume that the systematic component is a low rank matrix, and interpret its singular value decomposition
$\sum_{k}\sigma_k u_k v_k'$ as describing a set of common trends over time --- the scaled left singular vectors $\sigma_k u_k$ ---
that are reflected in the $i$th unit according to the `mixing weight' $(v_k)_i$ \citep[e.g.,][]{abadie2010synthetic, athey2017matrix, bai2009panel}.
When these units are regions, as is common, these mixing weights are interpreted as reflecting their mix of industries, cultures, environments, etc.
We will not assume low rank, but we will discuss the impact of low rank on our results.

We define $\sigma_e^2$ to be the variance of an element of $\varepsilon_e$ and $\sigma^2=\norm{\Sigmarow}$.\footnote{This definition of $\sigma^2$ differs from the definition $\sigma^2=\norm{\Sigmacol}$ used in Theorem~\ref{theorem:rate-simplified}. Nonetheless, we can use Theorem~\ref{theorem:rate-simplified} as written, changing only unspecified constant factors, with the definition we use here. The two definitions are equivalent up to constant factors, 
as \smash{$\Sigmarow=n^{-1}\trace(\Sigmacol) I$} \eqref{eq:recall-common-notation}
and we have assumed that \smash{$\norm{\Sigmacol} \le c n^{-1}\trace(\Sigmacol)$}, i.e., noise is essentially spherical.}

Throughout, we will consider regression without intercept, taking $\Theta_0=\set{0}$.
\subsection{Summary}
\label{sec:sc-summary}
The synthetic control approach of \citet{abadie2010synthetic} is based on the principle that if
the trajectory of a linear combination of control units approximates the treated unit before onset of treatment,
the same combination should approximate the outcome we'd observe after onset if, contrary to fact, it were not treated.
In the notation above, we can write this as $X\theta \approx y \implies x_e' \theta \approx y_e(0)$,
which suggests the synthetic control treatment effect estimator $\hat \tau = y_e - x_e' \hat \theta$ where $X\hat\theta \approx y$. 
\citet{abadie2010synthetic} chooses $\hat\theta$ by least squares from the set $\Theta$ of nonnegative weights summing to one; 
\citet{doudchenko2016balancing} suggests the inclusion of a Tikhonov penalty as in \eqref{eq:ridge-general}.

The analysis of this estimator is complicated by the use of weights fit to these noisy pre-treatment observations,
which allows the possibility of overfitting and bias arising from dependence between $\hat\theta$ and $[\varepsilon_e', \nu_e]$.
To simplify this, we approximate the behavior of 
$\hat \tau$ by a variant with deterministic weights, $\tilde \tau = y_e - x_e' \tilde\theta$ for $\tilde\theta$ as in \eqref{eq:ridge-oracle},
and analyze deviation from the oracle $\hat \tau - \tilde \tau$ and oracle error $\tilde \tau - \tau$ separately.
Because the oracle weights $\tilde\theta$ are deterministic, the error of the oracle estimator 
decomposes cleanly into a bias term and a noise term, yielding a three-term error decomposition.
\begin{equation}
\label{eq:three-term-error}
\hat \tau - \tau = (\hat \tau - \tilde \tau) +
 \underbrace{(b_e - a_e'\tilde\theta)}_{\E\tilde\tau - \tau} + 
\underbrace{(\nu_e - \varepsilon_e'\tilde\theta)}_{\tilde\tau - \E\tilde\tau}.
\end{equation}
When the first two terms are negligible, this error behaves like the third: an average of independent mean-zero random variables.
And when our treated unit $[y; y_e]$ is an aggregate of many treated units in the sense of \eqref{eq:synthdid-disaggregated}, 
this makes inference straightforward, as we can use resampling methods like the bootstrap or jackknife appropriate for such averages.
In Section~\ref{sec:sc-deviation} below, we bound the first term and give sufficient conditions for it to be negligible. In Section~\ref{sec:oracle-behavior}, we discuss the second term.
%and inference based on the jackknife estimate of standard error to be asymptotically exact.
%In the Sections~\ref{sec:sc-deviation} and \ref{sec:oracle-behavior} below, we bound the first two terms of our error decomposition 
%and give sufficient conditions for them to be negligible. %and inference based on the jackknife estimate of standard error to be asymptotically exact.

In this context, negligible means vanishingly small relative to the standard deviation of the third term: \smash{$\sigma_e p_{eff}^{-1/2}$} where $\sigma_e^2 = \var(\varepsilon_{ej})$ and
the effective sample size parameter $p_{eff}$, defined \smash{$1/p_{eff} = 1/p_e + \norm{\stheta}^2$},
is roughly the smaller of (i) $p_e$, the number of treated units 
in our aggregate model \eqref{eq:synthdid-disaggregated} and (ii) $p_c = 1/\norm{\stheta}^2$, which is roughly the number of strongly-weighted control units.\footnote{If all nonzero synthetic control weights $\tilde\theta_i$ were equal, $p_c$ would be the number of controls with nonzero weight. This new definition of $p_{eff}$ is equivalent up to a factor of two to the one used in Theorem~\ref{theorem:rate-simplified},
as $\Sigma_{\nu}=\Sigmacol / p_e$ in our setting.
%which reduces to \smash{$p_{eff}^{-1/2} = \norm{\stheta} + p_e^{-1/2}$} in this setting.
We will use the new one when applying Theorem~\ref{theorem:rate-simplified}.}

\subsection{Deviation from the Oracle}
\label{sec:sc-deviation}

The first result is stated in terms of the singular value decomposition $A=\sum_k \sigma_k u_k v_k'$;
$\varepsilon_{ej}$ and $\varepsilon_{\cdot j}$, the $j$th element of $\varepsilon_e$ and $j$th column of $\varepsilon$.
\begin{proposition}
\label{proposition:sc-bias-bound}
In the setting described in Section~\ref{sec:sc-setting}, let $\hat \tau = y_e - x_e' \hat \theta$ and $\tilde \tau = y_e - x_e' \tilde\theta$ for $\hat \theta$ and $\tilde \theta$ solving \eqref{eq:ridge-general} and \eqref{eq:ridge-oracle} respectively with any convex
set $\Theta$ and regularization parameter $\eta \ge 0$. For
$s$ and $u(s,v)$ as in Theorem~\ref{theorem:rate-simplified} and any $(w_1,w_2)$,
on an event of probability $1-c\exp\set{-cu(s,v)} - 2\exp(cw_1^2 \width^2(\sTheta_s)/s^2) - 2\exp(-w_2^2)$,
\begin{equation}
\label{eq:sc-deviation}
\begin{aligned}
\abs{\hat \tau - \tilde \tau}
&\le (s/\sigma)(\sqrt{2}D + w_2 \norm{\varepsilon_{ej} - \psi_{col}'\varepsilon_{\cdot j}}_{L_2}) + (1+ w_1)\norm{\psi_{col}'\Sigmacol^{1/2}} \width(\sTheta_s), \\
D^2 &= \sum_k \frac{ (a_e' v_k)^2 }{1+\sigma_k^2/(\sigma^2\eta^2 n)},\  \psi_{col} = \argmin_{v} \E \norm{\psi' \varepsilon - \varepsilon_e}^2, \notag \\ 
\end{aligned}
\end{equation}
\end{proposition}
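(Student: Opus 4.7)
The plan is to write
\[ \hat\tau - \tilde\tau = x_e'(\stheta - \htheta) = -a_e'\delta - \varepsilon_e'\delta, \qquad \delta := \htheta - \stheta, \]
and bound the three natural pieces (the signal term $a_e'\delta$ and, after the decomposition below, two pieces of the noise term $\varepsilon_e'\delta$) on the good event from Theorem~\ref{theorem:rate-simplified}. On that event we have $\norm{\Sigmarow^{1/2}\delta} \le s$ and $\norm{A\delta} \le \eta n^{1/2} s$; because the columns of $\varepsilon$ are iid and essentially spherical, $\Sigmarow = n^{-1}\trace(\Sigmacol)\,I$, so these translate to $\norm{\delta} \le s/\sigma$ and $\norm{A\delta} \le \eta n^{1/2} s$, and in particular $\delta$ lies in the deterministic set $\sTheta_s$.

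For the signal term I would expand $\delta$ in the right-singular basis of $A=\sum_k \sigma_k u_k v_k'$ and apply a weighted Cauchy--Schwarz,
\[ |a_e'\delta| = \Big|\sum_k (a_e'v_k)(v_k'\delta)\Big| \le \sqrt{\sum_k \frac{(a_e'v_k)^2}{\alpha_k}}\cdot\sqrt{\sum_k \alpha_k (v_k'\delta)^2}, \]
choosing the weights $\alpha_k = 1 + \sigma_k^2/(\sigma^2 \eta^2 n)$ so that the first factor is exactly $D$ and the second equals $\sqrt{\norm{\delta}^2 + \norm{A\delta}^2/(\sigma^2\eta^2 n)}$, which is at most $\sqrt{2}\,s/\sigma$ on the good event. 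This yields the $\sqrt{2}\,D\cdot s/\sigma$ contribution.

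For the noise term I would split $\varepsilon_e = \psi_{col}'\varepsilon + \zeta$, the population linear prediction of $\varepsilon_e$ from the rows of $\varepsilon$ plus its residual. Because the columns of $[\varepsilon;\varepsilon_e]$ are iid and gaussian and $\zeta$ is uncorrelated with $\varepsilon$, $\zeta$ is independent of $\varepsilon$ (and of $\nu$), hence of $\delta$, with independent coordinates of variance $\norm{\varepsilon_{ej}-\psi_{col}'\varepsilon_{\cdot j}}_{L_2}^2$; a gaussian tail bound conditional on $\delta$ gives $|\zeta'\delta| \le w_2(s/\sigma)\norm{\varepsilon_{ej}-\psi_{col}'\varepsilon_{\cdot j}}_{L_2}$ with probability at least $1-2\exp(-w_2^2)$. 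For the remaining piece, writing $\psi_{col}'(\varepsilon\delta) = z'\delta$ with $z:=\varepsilon'\psi_{col} \in \R^p$, the vector $z$ has iid coordinates of variance $\norm{\Sigmacol^{1/2}\psi_{col}}^2$; since $\delta \in \sTheta_s$, this is bounded by $\sup_{\delta'\in\sTheta_s} z'\delta'$, a gaussian-process supremum whose expectation is $\norm{\psi_{col}'\Sigmacol^{1/2}}\,\width(\sTheta_s)$ and whose upper tail via Borell--TIS (with Lipschitz constant $\norm{\psi_{col}'\Sigmacol^{1/2}}\cdot s/\sigma$, the max standard deviation over $\sTheta_s$) contributes the $w_1$ factor. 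A union bound over the three events then yields the stated probability.

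The main obstacle is this last step: because $\delta$ depends on $\varepsilon$ in a complicated way, one cannot directly apply a gaussian tail bound to $\psi_{col}'(\varepsilon\delta)$. The key move is to use Theorem~\ref{theorem:rate-simplified}'s containment $\delta \in \sTheta_s$ to replace $\delta$ by a supremum over the \emph{deterministic} set $\sTheta_s$, at which point the linear functional $\delta'\mapsto z'\delta'$ decouples from $\delta$ and standard gaussian-process technology becomes applicable.
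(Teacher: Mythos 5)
Your proposal uses the same three-term decomposition as the paper (a signal piece $a_e'\delta$, the predictable noise $\psi_{col}'\varepsilon\,\delta$, and the residual noise $(\varepsilon_e-\psi_{col}'\varepsilon)'\delta$), and the treatment of the two noise pieces coincides with the paper's, including using $\delta\in\sTheta_s$ to decouple the supremum from the data and then applying Gaussian concentration / conditional normality plus a union bound. The only place you diverge is in bounding the signal piece: you apply a single weighted Cauchy--Schwarz in the $\{v_k\}$ basis with weights $\alpha_k=1+\sigma_k^2/(\sigma^2\eta^2 n)$, whereas the paper introduces an auxiliary $\lambda$, writes $a_e'\delta=\lambda'A\delta+(a_e-A'\lambda)'\delta$, bounds each piece by ordinary Cauchy--Schwarz using $\norm{A\delta}\le\eta n^{1/2}s$ and $\norm{\delta}\le s/\sigma$, and then minimizes over $\lambda$ via the closed-form ridge-regression identity (Lemma~\ref{lemma:ridge-approx}). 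These are dual formulations of the same calculation and yield the identical constant $\sqrt{2}\,D\,s/\sigma$; your version is arguably more direct, as it reads off the optimal weights rather than introducing a Lagrangian-style variable and a separate lemma, while the paper's version makes the connection to Tikhonov regularization explicit. Both are correct.
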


We summarize by stating conditions for approximate normality around the mean of the oracle estimator.
\begin{corollary}
\label{corollary:sc-normality}
In the setting of Section~\ref{sec:treatment-effects}, consider the synthetic control estimator
$\hat \tau = y_e - x_e' \htheta$ for $\htheta$ satisfying \eqref{eq:ridge-general} with $\Theta=\set{\theta \in \R^p : \theta_i \ge 0 \text{ for all } i,\ \sum_{i=1}^p \theta=1}$. 
The $z$-statistic $(\hat \tau - \E\tilde\tau)/\sigma_{\tau}$ for $\sigma_{\tau} = \sigma_e p_{eff}^{-1/2}$ 
converges in law to a standard normal if
\begin{enumerate}
\item $n^{-1}\norm{A\tilde\theta - b}^2 \ll n^{\kappa-1} \sigma_{\tau}^2$ for any $\kappa \in \mathbb{R}$;
\item $\eta$ satisfies usually-almost-vacuous lower bounds \eqref{eq:near-vacuous-eta-bounds} stated in the Appendix;
\item 
\[ \rank(A) \ll \min\set*{ n \p*{\frac{\eta \sigma_e}{\tilde D}}^2,\ \frac{\sigma^2 n^2}{p_{eff}} \p*{\frac{\eta \sigma_e}{\tilde D}}^4 }; \]
\item
\begin{align*} 
p_{eff} \ll \min\bigg[&\frac{n}{\log(p)} 
			\min\set*{\p*{\frac{\eta \sigma_e}{\tilde D}}^2,\ \p*{\frac{\eta \sigma_e}{\tilde D}}^4}, \\
		      &\frac{n^{2-\kappa}}{\log(p)}  \p*{\frac{\sigma}{\sigma_e}}^2 \p*{\frac{\eta \sigma_e}{\tilde D}}^4,\  
		       \frac{\sigma_e^2}{\log(p) \norm{\psi_{col}'\Sigmacol^{1/2}}^2}\bigg]. 
\end{align*}
\end{enumerate}
Here $\tilde D =  D + \norm{\varepsilon_{ej} - \psi_{col}'\varepsilon_{\cdot} j}_{L_2}$ for $D$ as in Proposition~\ref{proposition:sc-bias-bound}.

In place of the bound on $\rank(A)$, we may substitute the singular value bound 
\[ \sigma_{R+1}(A) \le \sigma p_{eff}^{-1/2} R\ /\ \sqrt{\log(p)} \ \text{ for some }\ 
    R \ll  \min\set*{ n \p*{\frac{\eta \sigma_e}{\tilde D^2}}^2,\ \frac{\sigma^2 n^2}{p_{eff}} \p*{\frac{\eta \sigma_e}{\tilde D}}^4 }. \]
\end{corollary}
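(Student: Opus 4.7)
The plan is to reduce asymptotic normality to two simpler claims. From the three-term decomposition \eqref{eq:three-term-error} and $\E\tilde\tau = \tau + (b_e - a_e'\stheta)$,
\[
\frac{\hat\tau - \E\tilde\tau}{\sigma_\tau} \;=\; \frac{\hat\tau - \tilde\tau}{\sigma_\tau} \;+\; \frac{\nu_e - \varepsilon_e'\stheta}{\sigma_\tau},
\]
so by Slutsky's theorem it suffices to show (i) the second summand converges in distribution to $\mathcal{N}(0,1)$ and (ii) the first is $o_P(1)$. For (i), column-wise independence of $[\varepsilon;\,\varepsilon_e]$ and the aggregation \eqref{eq:synthdid-disaggregated} give $\var(\nu_e - \varepsilon_e'\stheta) = \sigma_e^2(1/p_e + \norm{\stheta}^2) = \sigma_\tau^2$. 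Under the essentially-gaussian assumption this rescaled sum of independent mean-zero variables is (asymptotically) standard normal: exactly so under exact gaussianity, and via a Lindeberg--Feller argument otherwise, with no Lyapunov issue since simplex weights are bounded by one.

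For (ii), I would apply Proposition~\ref{proposition:sc-bias-bound} with slowly-growing auxiliary parameters $v = w_1 = w_2 = \sqrt{\log n}$ so that the three exceptional probabilities all vanish; condition~2's lower bounds on $\eta$ ensure $\eta_R \approx \eta$ and $u(v,s) \to \infty$. Since $\Theta$ is the simplex, $\width(\sTheta_s) \le c\sqrt{\log p}$ by Example~\ref{example:l1}, and the proposition then yields
\[
|\hat\tau - \tilde\tau| \;\lesssim\; \frac{s\,\tilde D}{\sigma}\sqrt{\log n} \;+\; \norm{\psi_{col}'\Sigmacol^{1/2}}\sqrt{\log n \cdot \log p}.
\]
The last display of condition~4 handles the second summand (absorbing the $\sqrt{\log n}$ slack into $\ll$). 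For the first, I would plug $\width(\sTheta_s) \le c\sqrt{\log p}$ into \eqref{eq:fixed-pt-simplified} to obtain the four-summand bound of Example~\ref{example:l1}, then verify term-by-term that $s^2 \ll \sigma^2\sigma_e^2/(\tilde D^2 p_{eff})$: the $\sigma^2\log(p)/(\eta^2 n)$ summand yields $p_{eff} \ll (n/\log p)(\eta\sigma_e/\tilde D)^2$; the $\sigma^2(n/p_{eff})^{1/2}\sqrt{\log p}/(\eta^2 n)$ summand yields the $(\eta\sigma_e/\tilde D)^4$ analogue; the $\sigma\norm{A\stheta - b}\sqrt{\log p}/(\eta^2 n)$ summand, combined with condition~1 bounding $\norm{A\stheta-b}^2$, yields the $n^{2-\kappa}$ condition; and the $(\sigma^2\rank(A)/p_{eff})^{1,1/2}/(\eta^2 n)$ summand yields condition~3 in both its linear and square-root pieces. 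The approximate-rank variant follows verbatim from the singular-value substitution in Theorem~\ref{theorem:rate-simplified}.

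The main obstacle is the bookkeeping of matching each fixed-point summand to the appropriate condition in the corollary and checking that the slowly-growing polylog factors arising from $v, w_1, w_2$ can be absorbed into the strict $\ll$ inequalities without tightening any of them. Beyond that, the Lindeberg CLT for (i) and the final Slutsky combination are routine.
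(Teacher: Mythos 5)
Your overall structure is the same as the paper's: reduce to showing $(\hat\tau - \tilde\tau)/\sigma_\tau = o_P(1)$ via Proposition~\ref{proposition:sc-bias-bound} and the fixed-point condition from Example~\ref{example:l1}, with the oracle noise term supplying the Gaussian limit by Slutsky. Your term-by-term matching of the four fixed-point summands to the conditions on $p_{eff}$ and $\rank(A)$, including the step where condition~1 is used to eliminate the $\norm{A\tilde\theta - b}$-dependent constraint in favor of the $n^{2-\kappa}$ bound, is also how the paper proceeds.

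The genuine gap is your specific choice $v=w_1=w_2=\sqrt{\log n}$. With this choice, Proposition~\ref{proposition:sc-bias-bound} gives the bound $(s/\sigma)(\sqrt{2}D+\sqrt{\log n}\,\norm{\varepsilon_{ej}-\psi_{col}'\varepsilon_{\cdot j}}_{L_2}) + (1+\sqrt{\log n})\norm{\psi_{col}'\Sigmacol^{1/2}}\width(\sTheta_s)$, and $s$ itself inherits factors of $v$ from the fixed-point inequality. The stated conditions only assert, e.g., $(s/\sigma)\tilde D \ll \sigma_\tau$ and $\norm{\psi_{col}'\Sigmacol^{1/2}}\sqrt{\log p} \ll \sigma_\tau$, which are strict $\ll$ but \emph{not} $\ll$ up to an extra $\sqrt{\log n}$: $a=o(b)$ does not imply $a\sqrt{\log n}=o(b)$. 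So you cannot in general ``absorb'' these polylog factors — that is not a bookkeeping chore but a real obstruction with your parameterization. The paper sidesteps it by keeping $(v,w_1,w_2)$ \emph{fixed}: it first shows that the conditions in \eqref{eq:near-vacuous-eta-bounds} make $\min(\sigma^2,1)\width^2(\sTheta_s)/s^2$ bounded away from zero, so the exceptional probability can be driven below any $\delta$ by choosing $(v,w_1,w_2)$ sufficiently large \emph{constants}; then, for each fixed $(v,w_1,w_2)$, the deviation bound is $o(\sigma_\tau)$ under the stated $\ll$ conditions without any extra log inflation. To repair your version you'd need $v_n,w_{1,n},w_{2,n}\to\infty$ chosen adaptively slowly relative to the unknown rate of decay of the ratio $(s/\sigma)\tilde D/\sigma_\tau$ — not $\sqrt{\log n}$.

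A second, smaller point: the paper's synthetic-control setting (Section~\ref{sec:sc-setting}) takes the noise columns to be jointly \emph{Gaussian}, and the proof of Proposition~\ref{proposition:sc-bias-bound} relies on exact normality (e.g., ``uncorrelated $\Rightarrow$ independent''). So the oracle term $(\nu_e - \varepsilon_e'\tilde\theta)/\sigma_\tau$ is exactly $N(0,1)$ and no CLT argument is needed. Your fallback Lindeberg--Feller claim is also not ``routine'': bounded simplex weights do not imply the Lindeberg condition (consider $\tilde\theta$ concentrated on one coordinate), so that sketch would require additional assumptions you have not stated.
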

Here and throughout, convergence refers to limits along arbitrary sequences of $X_k$,  $y_k$, $\eta_k$, etc. 
and we write $a \ll b$ and $a \lesssim b$ meaning $a=o(b)$ and $a=O(b)$.

If the assumptions above hold, the oracle's bias $\E \tilde\tau - \tau$ is negligible,
and we have a consistent variance estimator $\hat\sigma_{\tau}^2$ in the sense that
$\hat \sigma_{\tau}^2 / \sigma_{\tau}^2$ converges to one in probability, then we can construct confidence intervals 
$\hat C_{\alpha} = \hat \tau \pm z_{\alpha/2} \ \hat \sigma_\tau$ with asymptotically correct coverage \smash{$\lim P(\tau \in \hat C_{\alpha}) = 1-\alpha$.}
%Proposition~\ref{prop:jackknife} below says that if $y$ is an aggregate in the sense of \eqref{eq:synthdid-disaggregated},
%we can do this with only a slight strengthening of our assumptions, as the jackknife estimate of variance will be consistent 
%in this sense.
%
%These conditions, while relatively opaque, can be summarized in the following folk theorem.
%\addtocounter{theorem}{-1}
%\begin{proposition}
%\label{corollary:sc-normality}
%The error of the synthetic control estimator $\hat \tau$ is approximately normal with negligible bias in large samples f
%if there exists a weighted average of control units with predictive mean squared error $n^{-1}\norm{b - A\stheta}^2 \ll 1/p_{eff}$,
%\end{proposition}

\subsubsection{Understanding the Assumptions}
\label{sec:deviation-conditions}

When autocorrelation is sufficiently small ($\norm{\psi_{col}'\Sigmacol^{1/2}} \approx 0$), 
the summary $D$ of how `typical' the vector of post-treatment observations $a_e$ is of pre-treatment observations  (rows of $A$)
is the key determinant of our bound on $\hat \tau - \tilde \tau$ \eqref{eq:sc-deviation}.
It therefore determines the conditions above on $p_{eff}$ and $\rank(A)$ for asymptotic normality.
Throughout we will assume that the first numbered condition of Corollary~\ref{corollary:sc-normality} holds for $\kappa=1$.
This does not limit us much, as it unlikely that the oracle estimator's bias $\E \tilde\tau - \tau = b_e - a_e'\stheta$ 
will be $o(\sigma_{\tau})$ if the root-mean-squared element of the `training error' $A\stheta - b$ is not.

The best possible case is that the post-treatment observations aligns perfectly with the dominant 
direction in $A$: $a_e = \norm{a_e} v_1$ where $v_1$ is the first right singular vector of $A$. In this case,
$D \approx \sigma \eta \sqrt{n} \norm{a_e} / \sigma_1$. This should be comparable to $\sigma \eta$
because the length $\norm{a_e}$ of the vector $a_e$ will scale like the square root of its dimension $p$ 
and the largest singular values of an $n \times p$ matrix $A$ will tend to be on the order $\sqrt{np}$ --- 
this is the case, at least, when $A$ is a rank-1 matrix $A=xy'$, as then $\sigma_1 = \norm{x}\norm{y}$. 
More generally, it should be comparable to $\sigma \eta \sqrt{R_e}$ if $a_e$ or a good approximation to it 
is spanned by the first $R_e$ singular vectors and its projection on the $k$th
decays like that of a typical row of $A$ in the sense that
$\abs{a_e' v_k}/\abs{a_e' v_1} \lesssim \sigma_k/\sigma_1 = \norm{A v_k}/\norm{A v_1}$.\footnote{An approximation $a_e^R$
is good enough if it satisfies $\norm{a_e-a_e^R} \lesssim \sigma \eta \sqrt{R}$. In particular,
taking $a_e^R$ to be the projection of $a_e$ on the first $R_e$ singular vectors,
it suffices that the squared length of the orthogonal component, $\norm{a_e-a_e^R}^2=\sum_{k > R} (a_e' v_k)^2$,
satisfies $\sum_{k > R} (a_e' v_k)^2 \lesssim \sigma^2 \eta^2 R$.}

When there is enough typicality and little enough autocorrelation, this parameter $R_e$
determines our conditions on $p_{eff}$ and \smash{$\rank(A)$}. In particular, if there is nontrivial noise both pre and post-treatment 
($\sigma_e \gtrsim \sigma \gtrsim 1$), then it suffices that $p_{eff} \ll n/\{R_e^2 \log(p)\}$ and $\rank(A) \ll n/R_e$.
% leave this unsaid: \footnote{We require Assumption~\ref{assu:oracle} holds as in Corollary~\ref{corollary:sc-normality} for $\kappa \ge 0$ as well.} 
In the panel data literature, asymptotics involving a factor model $A=\sum_{k \le R_0} \sigma_k u_k v_k$ 
of bounded rank $R_0$ are common \citep[e.g.,][]{abadie2010synthetic, bai2009panel}.
As $R_e \le \rank(A)$, in this regime it suffices that $n \to \infty$ and $p_{eff} \ll n/\log(p)$.
In particular, this essentially holds when $y$ is an average of $p_e$ treated units like the control units as in \eqref{eq:synthdid-disaggregated}
and the number of treated units is negligibly small relative to the number of pre-treatment periods.

These ideal-case results do not depend on the strength of regularization $\eta$ except in that $\eta$ affects
the oracle estimator's bias. Qualitatively, if $a_e$ is sufficiently typical,
the behavior of $a_e'\hat \theta$ is essentially determined by that of $A \hat \theta$ and the squared loss $\norm{A\theta - b}^2$
determines this well enough that regularization is irrelevant. When it is less typical, our results become sensitive to $\eta$,
as regularization plays a larger role in driving the convergence of $\hat \theta$. 
If $a_e$ is atypical enough, e.g., orthogonal to the rows of $A$, then $D \approx \norm{a_e}$ and will generally be on the order of $\sqrt{p}$,
and Corollary~\ref{corollary:sc-normality} requires that \smash{$\eta \gg \{ p_{eff} p^2 \log(p) / n \}^{1/4}$} when $\sigma_e$ is bounded.
Unless $p$ and $p_{eff}$ are relatively small, this is fairly strong regularization, and in some cases it is enough to essentially rule out 
the possibility that the oracle is negligibly biased. Thus, if we do not expect $a_e$ to be very typical,
there are stricter limits on the study designs ($p_e,p,n$) in which we should expect approximate normality. 

\subsection{The Bias of the Oracle Estimator}
\label{sec:oracle-behavior}

The oracle estimator's bias is the error in predicting the expected post-treatment outcome for the treated unit by that of the synthetic control.
\[ \tilde\tau - \tau = y_e-\tau - x_e' \tilde\theta = (b_e' - a_e \tilde\theta) + (\nu_e - \varepsilon_e'\tilde\theta). \]
Intuitively, this will usually be no smaller than the typical error of analogous pre-treatment predictions, i.e.,
the root-mean-squared error $n^{-1/2}\norm{A\stheta - b}$ that is approximately minimized by the oracle weights. 
If we are lucky, the two will be comparable. More generally, we can hope that post-treatment prediction error 
$b_e - a_e \tilde\theta$ will be comparable to the prediction errors in some fraction of the pre-treatment periods.
If $\xi$ is the typical error in its worst $n^{\kappa}$ pre-treatment periods, then \smash{$\xi \le n^{-\kappa/2}\norm{A\stheta - b}$} \smash{$= n^{(1-\kappa)/2} \cdot n^{-1/2}\norm{A\stheta - b}$},
and it follows that this typical error is negligible if the pre-treatment error 
satisfies the bound \smash{$n^{-1/2}\norm{A\stheta - b} \ll n^{(\kappa-1)/2} \sigma_{\tau}$} 
as in Corollary~\ref{corollary:sc-normality}.

The oracle weights will fit this well if, roughly speaking, there exist weights on `enough of the controls' that do. 
Consider weights $\theta \in \Theta$ with \smash{$p^{-\kappa'/2}\norm{\theta} \lesssim p^{-\kappa'}$},
e.g., those distributing weight roughly evenly over $p^{\kappa'}$ controls units.
As the optimality of $\tilde\theta$ in oracle least squares problem \eqref{eq:ridge-oracle}
implies that oracle mean squared error is bounded by \smash{$n^{-1}\norm{A\theta - b}^2 + (\eta \sigma)^2\norm{\theta}^2$} for any $\theta \in \Theta$,
oracle mean squared error will be smaller than \smash{$n^{\kappa-1} \sigma_{\tau}^2$} (as desired)
if such weights for \smash{$p^{\kappa'} \gg \eta^2 \sigma^2 n^{1-\kappa} / \sigma_{\tau}^2$}
have error like this. Expanding \smash{$\sigma_{\tau}^2 = \sigma_e^2 / p_{eff}$}, this means it suffices to get good fit
from roughly even weights on \smash{$p^{\kappa'} \gg \eta^2  n^{1-\kappa} p_{eff}$} units when 
there is nontrivial post-treatment noise \smash{($\sigma_e \gtrsim \sigma$).} This is equivalent to a restriction
\smash{$\eta \ll \{n^{\kappa-1} p^{\kappa'}/p_{eff}\}^{1/2}$} on the strength of regularization 
where \smash{$n^{\kappa}$} and \smash{$p^{\kappa'}$} are the numbers of `predictive' pre-treatment periods and control units.

In the ideal case discussed in Section~\ref{sec:deviation-conditions}, essentially any level of regularization is
sufficient to ensure the estimator's normality around the oracle's mean. Thus, we can for example estimate weights by least squares
without penality $(\eta = 1)$, which will satisfy this negligible bias condition if 
$p_{eff} \ll n^{\kappa-1}p^{\kappa'}$.  For example, if $\sqrt{n}$ of the pre-treatment observations and $\sqrt{p}$
of the controls are predictive in the senses above, this imposes the condition $p_{eff} \ll \sqrt{np}$.
Considering also the ideal-case condition $p_{eff} \ll n/\log(p)$ for normality,
these assumptions imply that when we have far fewer treated observations than both control observations and pre-treatment periods,
we should expect $\hat\tau \pm 1.96\sigma_{\tau}$ to be an approximately valid $95\%$ confidence interval for $\tau$.

In the far-from-ideal case discussed at the end of Section~\ref{sec:deviation-conditions}, conclusions are different.
To satisfy our sufficient conditions for negligible bias and normality, we must regularize with strength $\eta$ satisfying
$\{p_{eff} p^2 \log(p) / n \}^{1/4}  \ll \eta \ll  \{n^{\kappa-1} p^{\kappa'}/p_{eff}\}^{1/2}.$
This leaves valid options only when \smash{$p_{eff}^3 \log(p) \ll n \cdot n^{2(\kappa-1)} p^{2(\kappa'-1)}$}.
If we consider the case that barely more than $\sqrt{n}$ of the pre-treatment observations are predictive ($\kappa=1/2+\epsilon$),
we need all controls to be predictive ($\kappa'=1$) for our results to apply with even one treated unit. 

Behavior in practice may not reflect what we see in this extremely unfavorable case, but it may not reflect what we see in the ideal case either.
To better understand it, it is important to have a sense of where we tend to fall on the continuum between these extremes
and what additional structure (e.g., approximate randomization) exists in the data.
For that, we must consider the way the onset of treatment and assignment of units to treatment is decided.

\bibliographystyle{plainnat}
\bibliography{eiv-references}

\newpage 

\begin{appendix}

\section{Refined Results on Least Squares}
\subsection{Definitions}
To state our refined results, we will use the following definitions.
\begin{equation}
\label{eq:more-common-notation}
\begin{array}{l||l|l}
                       & \text{independent rows} 
		       & \text{independent columns} \\

 \width_{\Sigma}(S)    &  \ge \width(\Sigmarow^{1/2} S)   
		       &  \ge \norm{\Sigmacol}^{1/2}\width(S)  \\
 p_{eff,\Sigma}^{-1/2} & \norm{\Sigmarow^{1/2}(\stheta - \psi)} + \norm{\varepsilon_{i\cdot} \psi - \nu_i}_{L_2} 
		       &  \norm{\Sigmacol}^{1/2}\norm{\stheta} + \norm{\Sigmanu}^{1/2} \\
 K                     & K_{row}
		       & K_{col} \\
 \phi                  &  1  
		       &  \sqrt{\frac{\norm{\Sigmacol}}{n^{-1}\trace(\Sigmacol)}} 
\end{array}
\end{equation}
Here our notation means that $\width_{\Sigma}(S)$ can be any upper bound on 
$\width(\Sigmarow^{1/2} S)$ or $\norm{\Sigmacol^{1/2}}\width(S)$ and 
$K$ is a bound characterizing the concentration of quantities related to $\varepsilon$ and $\nu$.
\begin{equation}
\label{eq:K-definition}
\begin{aligned}
&K_{row} \ge \max\p*{\norm{\varepsilon_{i\cdot}\Sigmarow^{-1/2}}_{\psi_2}, 
				  \frac{\norm{\varepsilon_{i\cdot}\psi - \nu_i}_{\psi_2}}{\norm{\varepsilon_{i\cdot}\psi - \nu_i}_{L_2}}}, \\ 
&K_{col} \ge \max\p*{\norm{\Sigmacol^{-1/2}\varepsilon_{\cdot j}}_{\psi_2}, \norm{\Sigmanu^{-1/2}\nu}_{\psi_2}},\ \text{ and for all }\ u \ge 0, \\
&P\p{\abs{\norm{\varepsilon_{\cdot j}}^2 - \E \norm{\varepsilon_{\cdot j}}^2 } \ge u} \
    \le c\exp\p*{-c \min\p*{\frac{u^2}{K_{col}^4 \E\norm{\varepsilon_{\cdot j}}^2},\  \frac{u}{K_{col}^2\norm{\Sigmacol}}}}.
\end{aligned}
\end{equation}
In this statement, $\norm{\xi}_{\psi_2} := \sup_{\norm{v} \le 1}\norm{v'\xi}_{\psi_2}$ for a subgaussian vector $\xi$. 
In the gaussian case, we can take $K$ to be universal constant, as the subgaussian norm $\norm{z}_{\psi_2}$
of a gaussian random variable is equivalent to its $L_2$ norm $\norm{z}_{L_2}$ and the 
the Hanson-Wright inequality gives the tail bound on $\abs{\norm{\varepsilon_{\cdot j}}^2 - \E \norm{\varepsilon_{\cdot,j}}^2}$.

\subsection{Results}

\begin{theorem}
\label{theorem:rate-general}
Suppose $v,s \in \mathbb{R}$ with $v \ge 1$ and $R \in \mathbb{N}$ satisfy this fixed point condition:
\begin{align*} 
s^2 \ge c\bigg[&\frac{K^4v^2\width^2_{\Sigma}(\sTheta_s)\set{1 + \phi (R/n)^{1/2}}^2}{\eta_R^4 n} + \frac{K^2v^2 \width^2_{\Sigma}(\sTheta_s)}{\eta_R^2 n} \\
    +   &\frac{Kv \norm{A\stheta - b} \width_{\Sigma}(\sTheta_s) + K^2v (n/p_{eff,\Sigma})^{1/2} \width_{\Sigma}(\sTheta_s) 
	  + (K^2v^2 R/p_{eff,\Sigma})^{1,1/2}}{\eta_R^2 n}\bigg], \\
R &\ge c \sigma_{R+1}(A)\width(\sTheta_s)/(\phi s + v p_{eff,\Sigma}^{-1/2}), \\
\eta_R^2 &= \max(0,\ \eta^2 - cK^2\phi^2R/n), \ x^{1,1/2} = x + x^{1/2}. 
\end{align*}
Then in the setting described in Section~\ref{sec:setting}, with either the rows of $[\varepsilon, \nu]$ independent and identically distributed
or the columns of $[\varepsilon, \nu]$ independent with those of $\varepsilon$ identically distributed,
$\norm{\Sigmarow^{1/2} (\htheta - \stheta)} \le  s$ and 
$\norm{A (\htheta-\stheta) + \htheta_0 - \theta_0} \le \eta n^{1/2}s$ 
with probability $1-c\exp\sqb{-\min\set{v^2 \phi^{-4} \width_{\Sigma}^2(\sTheta_s)/s^2,\ v^2 R,\ n}}$.
\end{theorem}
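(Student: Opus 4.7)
My plan is to build on the sketch in Section~\ref{sec:sketch}, making each bound quantitative and handling the three additional complications present in the refined statement: general subgaussian noise (controlled by $K$), possibly non-spherical covariance (controlled by $\phi$), and approximate low-rank structure (using $\sigma_{R+1}(A)$ in place of $\rank(A)$). The starting point is the deterministic lower bound~\eqref{eq:excess-sketch} on $\excess(\delta)$, which follows from the oracle first-order condition~\eqref{eq:projection-theorem-sketch} and the expansion~\eqref{eq:basic-ineq-sketch}. Since $\excess(\htheta - \stheta) \le 0$ by optimality, it suffices to prove $\excess(\delta) > 0$ for every feasible deviation $\delta = \theta - \stheta$ with $\theta \in \Theta$ that fails the target inequalities, and the fixed-point condition on $s$ is what guarantees this.

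The main probabilistic work is uniform control over $\delta \in \sTheta_s$. First I would show that $\norm{X\delta}^2$ concentrates around $\norm{A\delta}^2 + n\norm{\Sigmarow^{1/2}\delta}^2$ uniformly using Hanson--Wright and matrix deviation inequalities for subgaussian matrices; the constants picked up here are exactly the $K^4$ and $\phi^2$ factors in the first term of the fixed point, and the correction $K^2\phi^2 R/n$ in $\eta_R^2$ is the residual concentration error on a rank-$R$ subspace where $A$ is large and so cannot be absorbed into $\norm{A\delta}^2$. Then I would bound each of the three noise processes in~\eqref{eq:excess-sketch} via a gaussian-width supremum: the first, $2(\varepsilon\stheta - \nu)'A\delta$, after decomposing $A = A_R + A_{R^c}$ with $\norm{A_{R^c}} \le \sigma_{R+1}(A)$, contributes the $(K^2 v^2 R/p_{eff,\Sigma})^{1,1/2}$ term and forces the approximate-rank side condition on $R$; the second, $2(A\stheta - b)'\varepsilon\delta$, contributes $K v \norm{A\stheta - b}\width_\Sigma(\sTheta_s)$; and the bilinear-in-$\varepsilon$ term is handled via decoupling against an independent copy $\tilde\varepsilon$ followed by conditioning on $\varepsilon\stheta - \nu$, contributing $K^2 v (n/p_{eff,\Sigma})^{1/2}\width_\Sigma(\sTheta_s)$. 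Combining these bounds at a boundary $\{\norm{\Sigmarow^{1/2}\delta} = s\}$ and allocating the signal budget $\eta_R^2 n s^2$ across the three noise contributions recovers the fixed-point inequality in the statement.

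To pass from this single-boundary statement to the uniform bound $\norm{\Sigmarow^{1/2}(\htheta - \stheta)} \le s$, I would use the convex-rescaling trick suggested at the end of Section~\ref{sec:sketch}: for any $\delta = \theta - \stheta$ with $\theta \in \Theta$ and $\norm{\Sigmarow^{1/2}\delta} > s$, the rescaled $\alpha\delta$ with $\alpha = s/\norm{\Sigmarow^{1/2}\delta} \in (0,1)$ stays feasible by convexity of $\Theta$, and one transfers $\excess(\alpha\delta) > 0$ to $\excess(\delta) > 0$ by comparing the degrees of homogeneity of the signal (quadratic) and noise (linear) terms; both the condition $v \ge 1$ and the nonnegativity from~\eqref{eq:projection-theorem-sketch} enter here. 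The prediction bound $\norm{A(\htheta - \stheta) + \htheta_0 - \tilde\theta_0} \le \eta n^{1/2} s$ is then obtained by repeating the argument on the two-sided boundary $\{\norm{\Sigmarow^{1/2}\delta} \le s,\ \norm{A\delta} = r\}$: now the quadratic $\norm{A\delta}^2 = r^2$ is available to absorb the $\sqrt{R}\,r$ piece of the first noise process, producing a quadratic in $r$ whose larger root is $\eta n^{1/2} s$ up to constants.

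I expect two places to be the main obstacles. The first is keeping the row-independent and column-independent cases in a single unified bound: the effective variance $1/p_{eff,\Sigma}$, the definition of $\width_\Sigma$, and the concentration of $\norm{\varepsilon_{\cdot j}}^2$ (which in the column case relies on the Hanson--Wright-like hypothesis in the third line of~\eqref{eq:K-definition}) look quite different in the two cases; the abstraction in~\eqref{eq:more-common-notation} lets the final inequality take a common form, but each probabilistic step must be verified on both sides. The second is the convex-rescaling step, because different summands in $\excess$ scale differently in $\delta$ and the uniform control on $\norm{X\delta}^2$ is only quadratic on $\sTheta_s$; a two-parameter rescaling in $(s,r)$ is needed for the prediction bound, and one must verify that $\alpha\delta$ stays inside the region where the uniform noise bounds remain valid.
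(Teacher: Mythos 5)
Your proposal follows the same blueprint as the paper's proof: start from the lower bound on the excess loss from the sketch, control the four stochastic terms uniformly over $\sTheta_s$ via matrix-deviation/Chevet/chaining/decoupling with a rank-$R$ approximation for $A$, then run the two-stage boundary argument (first $\norm{\Sigmarow^{1/2}\delta}=s$, then $\norm{A\delta}=r$ with $r=\eta n^{1/2}s$) followed by convex rescaling. The one mechanical point worth flagging: the paper does not use a second, $(s,r)$-parametrized rescaling for the prediction bound nor verify that rescaled points stay inside the region where the noise bounds apply; instead it builds multiplicative factors $\max(\norm{A\delta}/r,1)$ and $\max(\norm{A\delta}^2/r^2,1)$ directly into \eqref{eq:quadratic-signal-term} and \eqref{eq:low-rank-term} so that the bounds hold on all of $\sTheta_s$, and then resolves the resulting inequalities by a piecewise quadratic minimization over $x=\norm{A\delta}$ with a careful choice of $r=cK(\phi s + vp_{eff,\Sigma}^{-1/2})R^{1/2}$ — which is also where the $cK^2\phi^2 R/n$ correction in $\eta_R$ actually comes from.
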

\noindent Theorem~\ref{theorem:rate-simplified} is a simplified variant that ignores some of the subtleties of the noise
by substituting for $K$ and $\phi$ unspecified universal constant bounds
and taking $\width_{\Sigma}(\sTheta_s) = \norm{\Sigma}^{1/2}\width(\sTheta_s)$.

\section{Proof of Refined Results}
We first address the case with no intercept, then generalize our proof to treat the case with intercept in Section~\ref{sec:with-intercept}. 
We begin at \eqref{eq:excess-sketch} from our proof sketch, which justifies working with the following lower bound $\tilde \excess$ on $\excess$.

\begin{equation}
\label{eq:excess-lb-def}
\begin{aligned}
\tilde\excess(\delta) 
&= \norm{X\delta}^2 + n(\eta^2-1)\norm{\Sigmarow^{1/2}\delta}^2 +\\
&-  2\abs{(\varepsilon\stheta - \nu)' A \delta + 2(A\stheta - b)' \varepsilon \delta 
 + [(\varepsilon\stheta - \nu)' \varepsilon - \E (\varepsilon\stheta - \nu)' \varepsilon] \delta}.
\end{aligned}
\end{equation}

Our proof relies on a few high-probability bounds on the terms in \smash{$\tilde\excess(\delta)$}. Choose $s,r \in \mathbb{R}$ and $R \in \mathbb{N}$
and define $\sTheta := \Theta - \stheta$ and \smash{$\sTheta_s := \set{\delta \in \sTheta  : \norm{\Sigmarow^{1/2} \delta} \le s}$}. 
On an event of probability $1-c\exp\sqb{-c\min\set{v^2 \phi^{-4} \width^2_{\Sigma}(\sTheta)/s^2 ,\ v^2 R ,\ n}}$, all $\delta \in \sTheta_s$ satisfy
\begin{equation}
\label{eq:quadratic-signal-term} 
\begin{aligned}
\norm{X\delta}^2 &\ge \norm{A\delta}^2 + n\norm{\Sigmarow^{1/2} \delta}^2 - cK^2v\width_{\Sigma}(\sTheta_s)  n^{1/2} \norm{\Sigmarow^{1/2} \delta} -  1_{col} cK^2\width^2_{\Sigma}(\sTheta_s) \\
&- \max(\norm{A\delta}^2/r^2, 1) cK\set{ \phi R^{1/2}rs + \phi \sigma_{R+1}(A)\width(\sTheta_s)s + v\width_{\Sigma}(\sTheta_s) r}.
\end{aligned}
\end{equation}
\begin{align}
&\smallabs{(\varepsilon\stheta - \nu)'A \delta} \le \max(\norm{A\delta}/r,1) cKvp_{eff,\Sigma}^{-1/2} \set{\sqrt{R}r + \sigma_{R+1}(A)\width(\sTheta_s)}. 
\label{eq:low-rank-term} \\
&\smallabs{(A\stheta - b)' \varepsilon \delta} \le cKv \norm{A\stheta - b} \width_{\Sigma}(\sTheta_s)
\label{eq:oracle-error-term} \\
&\abs{[(\varepsilon\stheta - \nu)'\varepsilon - \E(\varepsilon\stheta - \nu)'\varepsilon]\delta} 
\le cv K^2 (n/p_{eff,\Sigma})^{1/2} \width_{\Sigma}(\sTheta_s).
\label{eq:quadratic-noise-term} 
\end{align}
We prove these bounds below in Section~\ref{sec:proof-of-bounds}. When they hold, for all $\delta \in \sTheta_s$, 
\begin{equation}
\label{eq:excess-loss-lb}
\begin{aligned}
\tilde\excess(\delta)
&\ge  \norm{A\delta}^2 + \eta^2 n \norm{\Sigmarow^{1/2}\delta}^2 - cK^2v \width_{\Sigma}(\sTheta_s)n^{1/2}\norm{\Sigmarow^{1/2}\delta} - c\alpha \\
&- cK\sqb*{\set{q^2(\delta) \phi s +  q(\delta) vp_{eff,\Sigma}^{-1/2}}\set{R^{1/2}r + \sigma_{R+1}(A)\width(\sTheta_s)} + v\width_{\Sigma}(\sTheta_s)r}, \\
q(\delta) &= \max(\norm{A\delta} / r, \ 1), \\
\alpha &= 1_{col} K^2 \width^2_{\Sigma}(\sTheta_s) + Kv \norm{A\stheta - b} \width_{\Sigma}(\sTheta_s) + v K^2 (n/p_{eff,\Sigma})^{1/2} \width_{\Sigma}(\sTheta_s).
\end{aligned}
\end{equation}
Throughout, we will work on an event on which this bound holds for one choice of $s$, determined in Section~\ref{sec:s-bound},
and two choices of $r$. The first choice of $r$ is used in Section~\ref{sec:s-bound} and the second, which will be larger, is used in Section~\ref{sec:r-bound}.

To simplify our notation, we will restrict our choice of $R$ (as a function of $s,r$) to those satisfying $R^{1/2}r \ge \sigma_{R+1}(A)\width(\sTheta_s)$,
allowing us to ignore factors of $\sigma_{R+1}(A)$. As the value of $r$ is used in Section~\ref{sec:s-bound} is the smaller of the two,
we derive the sufficient condition in our theorem statement there.

\subsection{Bounding $\norm{\Sigma^{1/2}\delta}$} 
\label{sec:s-bound}
In this section, we will show that $\tilde\excess(\delta) > 0$ for $\delta \in \Theta^\circ_s := \set{\delta \in \sTheta : \norm{\Sigmarow^{1/2}\delta} = s}$.
This extends to $\delta \in \sTheta$ with $\norm{\Sigma^{1/2}\delta} \ge s$ by a simple scaling argument:
because $\sTheta$ is convex and contains zero, any such $\delta$ can be written as $\alpha \delta^\circ$ for $\alpha \ge 1$ and $\delta^\circ \in \Theta^\circ$,
and $\tilde\excess(\alpha \delta^\circ) \ge \alpha^2 \tilde\excess(\delta^\circ)$ for $\alpha \ge 1$.
To show that  $\tilde\excess(\delta) > 0$ for $\delta \in \Theta^\circ_s$, consider the implications of our lower bound \eqref{eq:excess-loss-lb} for such $\delta$.
\begin{equation}
\label{eq:excess-s}
\begin{aligned}
\tilde\excess(\delta)
&\ge  \eta^2 n s^2 - cK^2v \width_{\Sigma}(\sTheta_s)n^{1/2}s - c\alpha \\
&+ \norm{A\delta}^2 - cK \set{q^2(\delta) \phi s R^{1/2} + q(\delta) vp_{eff,\Sigma}^{-1/2}R^{1/2} + v\width_{\Sigma}(\sTheta_s)}r.
\end{aligned}
\end{equation}
We minimize the second line over $x=\norm{A\delta}$ to get a lower bound. As a function of $x$, this second line is
\begin{align*}
q_1(x) &= x^2 - cK \set{\phi s R^{1/2} +  vp_{eff,\Sigma}^{-1/2}R^{1/2} + v\width_{\Sigma}(\sTheta_s)}r && \text{ if } x \le r, \\
q_2(x) &= x^2(1- c K \phi s R^{1/2}/r) - x cK v p_{eff,\Sigma}^{-1/2}R^{1/2} - cK v\width_{\Sigma}(\sTheta_s)r && \text{ if } x \ge r.
\end{align*} 
Because $q_1(x)$ is increasing, its minimum on its domain is $q_1(0)$. 
Furthermore, so long as the leading coefficient of $q_2$ is positive, it has a unique global minimum,
and if that minimum occurs at $x \le r$, its minimum on its domain is $q_2(r)$.
And because $q_2(r) = r^2 + q_1(0) > q_1(0)$, when the global minimum of $q_2$ occurs at $x \le r$,
we can lower bound the second line in \eqref{eq:excess-s} by $q_1(0)$. We choose $r$ to make this happen:
as the minimum of the polynomial $a_2 x^2 - a_1 x - a_0$ occurs at $x= a_1 / 2a_2$,
it requires that $cK v p_{eff,\Sigma}^{-1/2}R^{1/2}/ 2(1- c K \phi s R^{1/2}/r) \le r$,
which holds for $r = cK (\phi s + v p_{eff,\Sigma}^{-1/2})R^{1/2}$. 
This choice makes the leading coefficient of $q_2$ positive as required.

Substituting this lower bound $q_1(0)$ for the last line of \eqref{eq:excess-s}
and expanding our simplifying assumption $R^{1/2}r \ge \sigma_{R+1}(A)\width(\sTheta_s)$
using this choice of $r$, we get the following bound. For all $\delta \in \Theta^\circ_s$,
\begin{equation*}
\label{eq:excess-s-lb}
\begin{aligned}
\tilde\excess(\delta)
&\ge  \eta^2 n s^2 - cK^2v \width_{\Sigma}(\sTheta_s)n^{1/2}s - c\alpha \\
&- cK^2 (\phi s +  vp_{eff,\Sigma}^{-1/2})^2 R - cK^2 v\width_{\Sigma}(\sTheta_s) (\phi s +  vp_{eff,\Sigma}^{-1/2}) R^{1/2} \\ 
&\ge  (\eta^2 n - cK^2 \phi^2 R)s^2 - cK^2v\width_{\Sigma}(\sTheta_s)(n^{1/2} + \phi R^{1/2})s - c\alpha', \\
\alpha' &= \alpha + v^2 K^2 \set{ p_{eff,\Sigma}^{-1}R +  \width_{\Sigma}(\sTheta_s) p_{eff,\Sigma}^{-1/2} R^{1/2} }, \\
R &\ge c \sigma_{R+1}(A)\width(\sTheta_s)/(\phi s + v p_{eff,\Sigma}^{-1/2}).
\end{aligned}
\end{equation*}
To derive the latter lower bound on $\tilde\excess$, 
we use the bound $(x+y)^2 \le 2x^2 + 2y^2$ on the square of  $\phi s + vp_{eff,\Sigma}^{-1/2}$ and
then group terms. So long as $\eta^2 n - cK^2 \phi^2 R > 0$, this lower bound exceeds zero
whenever $s$ exceeds its larger root, and therefore when $s^2$ exceeds the square of its larger root.
As the larger root of $a_2x^2 - a_1x - a_0$ is $(a_1 + \sqrt{a_1^2 + 4a_0a_2})/ 2a_2 \le \sqrt{a_1^2/a_2^2 + 4a_0/a_2}$,
this happens when
\begin{equation}
\label{eq:s-condition-penultimate}
s^2 \ge \p*{\frac{cK^2v\width_{\Sigma}(\sTheta_s)(n^{1/2} + \phi R^{1/2})}{\eta^2 n - cK^2 \phi^2 R}}^2 + \frac{c\alpha'}{\eta^2 n - cK^2 \phi^2 R}.
\end{equation}

\subsection{Bounding $\norm{A\delta}$}
\label{sec:r-bound}

In this section, we will show that $\tilde\excess(\delta) > 0$ for $\delta \in \Theta^\star_s$ with $\norm{A\delta} \ge r$.
For such $\delta$,
\begin{equation*}
\label{eq:excess-loss-lb-r}
\begin{aligned}
\tilde\excess(\delta)
&\ge  \eta^2 n \norm{\Sigmarow^{1/2}\delta}^2 - cK^2v \width_{\Sigma}(\sTheta_s)n^{1/2}\norm{\Sigmarow^{1/2}\delta}  \\
&+( 1 - cK \phi s R^{1/2}/r)\norm{A\delta}^2  - cK vp_{eff,\Sigma}^{-1/2} R^{1/2} \norm{A\delta} - \set{cKv\width_{\Sigma}(\sTheta_s)r + c\alpha}.
\end{aligned}
\end{equation*}
This is a quadratic in $x=\norm{\Sigma^{1/2}\delta}$ of the form $\eta^2 n x^2 - 2 a_1 x + a_0$, 
and is minimized at $x=a_1/(\eta^2 n)$, where it takes on the value $a_0-a_1^2/(\eta^2 n)$. Our lower bound will be positive if this quantity
exceeds zero. This quantity is quadratic in $x=\norm{A\delta}$. 
\begin{align*}
a_0 - a_1^2/(\eta^2 n) 
&= ( 1 - cK \phi s R^{1/2}/r) \norm{A\delta}^2  - cK vp_{eff,\Sigma}^{-1/2} R^{1/2} \norm{A\delta} \\
&- \set{cKv\width_{\Sigma}(\sTheta_s)r + c\alpha + cK^4v^2 \width^2_{\Sigma}(\sTheta_s)/\eta^2}.
\end{align*}
So long as $1 - cK \phi s R^{1/2}/r > 0$, it will be positive when $\norm{A\delta}$ exceeds its larger root. 
It follows that it will be positive for all $\delta \in \sTheta_s$ with $\norm{A\delta} \ge r$ if
$r$ exceeds its larger root.
And as the larger root of $a_2 x^2 - a_1 x - a_0$ is $(a_1 + \sqrt{a_1^2 + 4a_0a_2})/ 2a_2 < \sqrt{a_1^2/a_2^2 + 4a_0/a_2}$,
a sufficient condition is that
\[ r^2 \ge c\p*{\frac{K vp_{eff,\Sigma}^{-1/2} R^{1/2}}{1 - cK \phi s R^{1/2}/r}}^2 
      +    c\frac{Kv\width_{\Sigma}(\sTheta_s)r  + \alpha + K^4v^2 \width^2_{\Sigma}(\sTheta_s)/\eta^2}{1 - cK \phi s R^{1/2}/r}. \]
Multiplying both sides by $(1 - cK \phi s R^{1/2}/r)^2$, we get the equivalent condition,
\begin{align*} 
r^2 &- cK\phi R^{1/2}sr + cK^2 \phi^2 s^2 R \ge cK vp_{eff,\Sigma}^{-1/2} R^{1/2} \\
    &- c(1 - cK \phi s R^{1/2}/r)\set{Kv\width_{\Sigma}(\sTheta_s)r  + \alpha + K^4v^2 \width^2_{\Sigma}(\sTheta_s)/\eta^2}.
\end{align*}
Dropping the $s^2$ term and the factor $(1 - cK \phi s R^{1/2}/r)$ in the last term above yields a simplified sufficient condition:
the nonnegativity of a quadratic function of $r$.
\begin{align*} 
0 \le r^2 &- c\set{K\phi R^{1/2}s + Kv\width_{\Sigma}(\sTheta_s)}r 
	  - c\set{K vp_{eff,\Sigma}^{-1/2} R^{1/2} + \alpha +  K^4v^2 \width^2_{\Sigma}(\sTheta_s)/\eta^2}. 
\end{align*}
This is satisfied when $r$ is at least its larger root, and therefore when
\begin{equation} 
\label{eq:r-condition-penultimate}
r^2 \ge c\set{K\phi R^{1/2}s + Kv\width_{\Sigma}(\sTheta_s)}^2 + c\set{K vp_{eff,\Sigma}^{-1/2} R^{1/2} + \alpha + K^4v^2 \width^2_{\Sigma}(\sTheta_s)/\eta^2}. 
\end{equation}

\subsection{Simplifications}
To simplify our result, we derive a sufficient condition for \eqref{eq:s-condition-penultimate} and \eqref{eq:r-condition-penultimate} to hold.

If $s$ satisfies a variant of \eqref{eq:s-condition-penultimate} in which a slightly different parameter $\alpha''$ replaces $\alpha'$,
then $r=\eta n^{1/2} s$ satisfies \eqref{eq:r-condition-penultimate}. In particular, \eqref{eq:r-condition-penultimate} is satisfied for this $r$ if
\begin{align*} 
(\eta^2 n - cK^2\phi^2 R) s^2 &\ge c\set{ v^2 K^2 \width^2_{\Sigma}(\sTheta_s) +  K vp_{eff,\Sigma}^{-1/2} R^{1/2} + \alpha + K^4v^2 \width^2_{\Sigma}(\sTheta_s)/\eta^2 } \\
			      &= cK^4v^2 \width^2_{\Sigma}(\sTheta_s)/\eta^2 + c \alpha'', \\
\alpha'' &=  \alpha + K^2v^2 \width^2_{\Sigma}(\sTheta_s) + K vp_{eff,\Sigma}^{-1/2} R^{1/2}.
\end{align*}
This is implied by our variant of \eqref{eq:s-condition-penultimate} because 
\[ \frac{ K^4v^2 \width^2_{\Sigma}(\sTheta_s)/\eta^2}{ \eta^2 n - cK^2\phi^2 R } \le \p*{\frac{K^2v\width_{\Sigma}(\sTheta_s)(n^{1/2} + \phi R^{1/2})}{\eta^2 n - cK^2 \phi^2 R}}^2. \]
To see that this holds, cancel like factors and rearrange to get the equivalent condition 
$(1/\eta^2)(\eta^2 n - cK^2 \phi^2 R) \le (n^{1/2} + \phi R^{1/2})^2$, which is obviously satisfied.

It follows that if $s$ satisfies a variant of \eqref{eq:s-condition-penultimate} in which an upper bound on $\max(\alpha',\alpha'')$ replaces $\alpha'$,
then $s$ satisfies \eqref{eq:s-condition-penultimate} and $r=\eta n^{1/2}s$ satisfies \eqref{eq:r-condition-penultimate}. This,
as discussed in Sections~\ref{sec:s-bound} and \ref{sec:r-bound}, implies that 
\smash{$\norm{\Sigmarow^{1/2}\delta} \le s$} and \smash{$\norm{A\delta} \le \eta n^{1/2} s$}
so long as $\eta^2 n - cK^2 \phi^2 R > 0$. We conclude by simplifying this variant.

One upper bound on $\max(\alpha', \alpha'')$ incorporates the excess from $\alpha'$ and $\alpha''$.
\begin{align*} 
\max(\alpha', \alpha'') &\le \alpha + (\alpha' - \alpha) + (\alpha''-\alpha) \\
&=  (1_{col} + 1)K^2v^2 \width^2_{\Sigma}(\sTheta_s) + Kv \norm{A\stheta - b} \width_{\Sigma}(\sTheta_s) + K^2v (n/p_{eff,\Sigma})^{1/2} \width_{\Sigma}(\sTheta_s) \\
&+  K^2v^2 \set{ p_{eff,\Sigma}^{-1}R +  \width_{\Sigma}(\sTheta_s) p_{eff,\Sigma}^{-1/2} R^{1/2} } + K vp_{eff,\Sigma}^{-1/2} R^{1/2}.
\end{align*}
Doubling constant factors justifies dropping the term $v^2K^2 p_{eff,\Sigma}^{-1/2} R^{1/2} \cdot  \width_{\Sigma}(\sTheta_s)$,
as it is the geometric mean of $K^2v^2 \width^2_{\Sigma}(\sTheta_s)$ and $ K^2v^2 p_{eff,\Sigma}^{-1} R$ and therefore bounded by their sum.
Thus, this is bounded by $2\alpha'''$ where, for $x^{1,1/2} = x+x^{1/2}$,
\begin{align*}
\alpha''' &= K^2v^2 \width^2_{\Sigma}(\sTheta_s) + Kv \norm{A\stheta - b} \width_{\Sigma}(\sTheta_s) + K^2v (n/p_{eff,\Sigma})^{1/2} \width_{\Sigma}(\sTheta_s) \\
	  &+ (K^2v^2 R/p_{eff,\Sigma})^{1,1/2}.
\end{align*}

In the statement of Theorem~\ref{theorem:rate-general},
we replace $\eta^2 n - cK^2 \phi^2 R$ with its positive part $\max(0, \eta^2 n - cK^2 \phi^2 R)$
in the denominators in our variant of \eqref{eq:s-condition-penultimate}. 
This allows us to drop our that assumption $\eta^2 n - cK^2 \phi^2 R > 0$, as no $s$ will satisfy the stated bound unless it holds.
For clarity about scaling with $n$, we write this as $\eta_R^2 n$ where $\eta_R = \max(0, \eta^2 - cK^2\phi^2R/n)$.
In this notation, our sufficient condition on $s$ is

\begin{equation}
\label{eq:s-condition-penultimate-2}
s^2 \ge \p*{\frac{cK^4v^2\width^2_{\Sigma}(\sTheta_s)\set{1 + \phi (R/n)^{1/2}}^2}{\eta_R^4 n}} + \frac{c\alpha'''}{\eta_R^2 n}.
\end{equation}

To characterize $R$, recall from our discussion following Equation~\ref{eq:excess-loss-lb} that in this expression,
$R$ can be any value satisfying \smash{$R^{1/2}r \ge \sigma_{R+1}(A)\width(\sTheta_s)$} for both
the value \smash{$r= cK (\phi s + v p_{eff,\Sigma}^{-1/2})R^{1/2}$} used in Section~\ref{sec:s-bound}
and the value \smash{$r=\eta n^{1/2} s$} used here and in Section~\ref{sec:r-bound}. Up to constant factors, the latter is larger:
we know that it exceeds \smash{$cK\phi R^{1/2}s$} because \eqref{eq:r-condition-penultimate} is satisfied and
that it exceeds \smash{$cKv p_{eff,\Sigma}^{-1/2})R^{1/2}$} because \eqref{eq:s-condition-penultimate} implies 
that \smash{$\eta n^{1/2} s \ge c (\eta/\eta_R) (\alpha''')^{1/2} \ge cK v R^{1/2}/p_{eff,\Sigma}^{1/2}.$}
Thus, adjusting constant factors as necessary, the constraint arising from the former value is sufficient.
It is \smash{$R \ge c \sigma_{R+1}(A)\width(\sTheta_s)/(\phi s + v p_{eff,\Sigma}^{-1/2})$.}

We conclude by proving our bounds \eqref{eq:quadratic-signal-term}-\eqref{eq:quadratic-noise-term}.
\subsection{Notation for Proof of Bounds}
\label{sec:proof-of-bounds}

We will show that our bounds \eqref{eq:quadratic-signal-term}, \eqref{eq:low-rank-term}, \eqref{eq:oracle-error-term}, and \eqref{eq:quadratic-noise-term}
hold for all $\delta \in \sTheta_s$ with $\norm{\Sigmarow^{1/2} \delta} \ge v^{-1} n^{-1/2} \width_{\Sigma}(\sTheta_s)$
on events of probability $1-c\exp(-u)$ where, respectively, $u$ is equal to 
\[\min\set{v^2 \phi^{-2} d_{\Sigma}(\sTheta_s), n},\ v^2 d_{\Sigma}(\sTheta_s), \ v^2 R,\ c\min\set{v^2 d_{\Sigma}(\sTheta_s), n}
\ \text{ for }\ v \ge 1.\] 
Thus, by the union bound, all hold simultaneously on an event of probability at least $1-c\exp(-u)$ for $u$ equal to the minimum of the above values.
This event has probability at least $1-c\exp\sqb{-c\set{(v/\phi)^2 d_{\Sigma}(\sTheta_s),\ v^2R,\ n}}$ as claimed,
as $\phi \ge 1$.

To state our bounds in common notation throughout, we recall a few definitions from \eqref{eq:common-notation} and \eqref{eq:more-common-notation} 
that depend on whether the rows or columns of $[\varepsilon, \nu]$ are independent.
\begin{equation}
\label{eq:recall-common-notation}
\begin{array}{l||l|l}
                       & \text{ind. rows} 
		       & \text{ind. columns} \\
\hline
 \width_{\Sigma}(S)    &  \ge \width(\Sigmarow^{1/2} S)   
		       &  \ge \norm{\Sigmacol^{1/2}}\width(S)  \\
 p_{eff,\Sigma}^{-1/2} &  \norm{\Sigmarow^{1/2}(\stheta - \psi)} +  \norm{\varepsilon_{i\cdot} \psi - \nu_i}_{L_2}  
		       &  \norm{\Sigmacol}^{1/2}\norm{\stheta} + \norm{\Sigmanu}^{1/2}  \\
 \phi                  &  1  
		       &  \sqrt{\frac{\norm{\Sigmacol}}{n^{-1}\trace(\Sigmacol)}} = \sqrt{\frac{\norm{\Sigmacol}}{\norm{\Sigmarow}}} \\
 1_{col}               &  0  
		       &  1.
\end{array}
\end{equation}
Here we state a second equivalent definition of $\phi$.
Equivalence for $\phi$ holds because when columns of $\varepsilon$ are iid, $\Sigmarow = n^{-1}\trace(\Sigmacol)I$. We show this via a straightforward calculation.
\begin{align*}
&\Sigmacol^{ij} = p^{-1}\sum_{k \le p}\E \varepsilon_{ik} \varepsilon_{jk} = \E \varepsilon_{i1}\varepsilon_{j1}
\quad \text{ has } \quad  n^{-1}\trace(\Sigmacol)=n^{-1}\sum_{i \le n} \E\varepsilon_{i1}^2 \quad \text{ and therefore } \\
&\Sigmarow^{ij} = n^{-1} \sum_{k \le n} \E \varepsilon_{ki} \varepsilon_{kj}= n^{-1}\sum_{k \le n} \E \varepsilon_{k1}^2 \cdot I^{ij} = 
    n^{-1}\trace(\Sigmacol) \cdot I^{ij}.  
\end{align*}
In terms of these quantities, we will also define a variant of stable dimension,
\begin{equation}
 d_{\Sigma}(S) = \frac{\width^2_{\Sigma}(S)}{\phi^2 \rad^2(\Sigmarow^{1/2}S)}
	       = \begin{cases}
			\width^2_{\Sigma}(S)/\rad^2(\Sigmarow^{1/2}S)    & \text{ ind. rows} \\
			\width^2_{\Sigma}(S)/(\norm{\Sigmacol}\rad^2(S)) & \text{ ind. cols} .
		 \end{cases}
\end{equation}
The second definition is equivalent because when when the columns of $\varepsilon$ are iid, 
$\rad(\Sigmarow^{1/2}S)=\norm{\Sigmarow}^{1/2}\rad(S) = \phi^{-1}\norm{\Sigmacol}^{1/2}\rad(S)$.

\subsection{Proving \eqref{eq:quadratic-signal-term}}

To lower bound $\norm{X\delta}^2$, we work with the following three term decomposition.
\[ \norm{X\delta}^2 = \norm{A\delta}^2 + \norm{\varepsilon\delta}^2 + 2\delta' A' \varepsilon \delta \]
The first term is deterministic, and we will lower bound the remaining two.

\subsubsection{Lower bounding the second term.}
\label{sec:second-term}
\paragraph{With independent rows.} We use a bound from \citet[Theorem 1.4]{liaw2017simple}.
For a random matrix $\xi$ with iid rows with unit covariance, 
\[ \sup_{x \in T}\abs{\norm{\xi x} - \sqrt{n}\norm{x}} \le c \norm{\xi_{i\cdot}}_{\psi_2}^2 (\width(T) + u \rad(T))\ \text{ with probability } 1-\exp(-u^2). \]
Letting $\xi = \varepsilon \Sigmarow^{-1/2}$ and $x=\Sigmarow^{1/2}\delta$, this gives
\begin{align*}
\sup_{\delta \in \sTheta_s} \abs{\norm{\varepsilon\delta} - n^{1/2}\norm{\Sigmarow^{1/2}\delta}} 
&\le c K^2 [\width(\Sigma^{1/2}\sTheta_s) + u\rad(\Sigma^{1/2}\sTheta_s)] \\
&\le c K^2 \width_{\Sigma}(\sTheta_s)[1 + ud^{-1/2}_{\Sigma}(\sTheta_s)].
\end{align*}

Observing that for any nonnegative $x,y,z$ satisfying $\abs{x-y} \le z$,
\[ x^2 - y^2 = (x-y)(x+y) = (x-y)(x-y+2y) = (x-y)^2 + 2y(x-y)  \ge -2yz, \]
it follows that with probability $1-\exp(-u^2)$, for all $\delta \in \sTheta_s$, 
\[ \norm{\varepsilon\delta}^2 \ge  n\norm{\Sigmarow^{1/2}\delta}^2 - c K^2 \width_{\Sigma}(\sTheta_s)[1+ ud^{-1/2}_{\Sigma}(\sTheta_s)] n^{1/2}\norm{\Sigmarow^{1/2}\delta}. \]
Taking $v=ud^{-1/2}_{\Sigma}(\sTheta_s)$, it follows that with probability $1-\exp(-v^2d_{\Sigma}(\sTheta_s))$,
\[ \norm{\varepsilon\delta}^2 \ge  n\norm{\Sigmarow^{1/2}\delta}^2 - c K^2 (1+v)\width_{\Sigma}(\sTheta_s) n^{1/2}\norm{\Sigmarow^{1/2}\delta}. \]

\paragraph{With independent columns.} We use Corollary~\ref{cor:norm-of-sum-bound},
taking $\xi=\varepsilon'$ and $\mathcal A = \sTheta_s$. It implies that with probability $1-2\exp(-u^2)$, 
for all $\delta \in \sTheta_s$,
\begin{align*} 
&\norm{\varepsilon\delta}^2 \ge \E \norm{\varepsilon \delta}^2 \\
&- cK^2\set*{ \norm{\Sigmacol}^{1/2}\trace^{1/2}(\Sigmacol)\width(\sTheta_s)\norm{\delta} +
	\norm{\Sigmacol}\width^2(\sTheta_s) + \norm{\Sigmacol}\norm{\delta} \rad(\sTheta_s)(n^{1/2}u + u^2)}. 
\end{align*}						    
Recall that $\E \norm{\varepsilon \delta}^2 = n\norm{\Sigmarow^{1/2} \delta}^2$ and $\Sigmarow=n^{-1}\trace(\Sigmacol)I$, so
\begin{align*} 
\trace^{1/2}(\Sigmacol)\norm{\delta} &= n^{1/2}\norm{\Sigmarow^{1/2}\delta}, \\
\norm{\Sigmacol}\norm{\delta} &= \norm{\Sigmacol}^{1/2} \phi\norm{\Sigmarow}^{1/2} \norm{\delta} = \phi \norm{\Sigmacol}^{1/2} \norm{\Sigmarow^{1/2}\delta}
\end{align*}
and equivalently,
\begin{align*}
\norm{\varepsilon\delta}^2 
&\ge n\norm{\Sigmarow^{1/2} \delta}^2 - cK^2 \norm{\Sigmacol}^{1/2}\width(\sTheta_s)  n^{1/2} \norm{\Sigmarow^{1/2} \delta}  \\
&- cK^2\set{ \norm{\Sigmacol}\width^2(\sTheta_s) + \phi\norm{\Sigmacol}^{1/2}\norm{\Sigmarow^{1/2}\delta}\rad(\sTheta_s)(n^{1/2}u + u^2)}.
\end{align*}
The substitution of $\width_{\Sigma}(\sTheta_s) \ge \norm{\Sigmacol}^{1/2}\width(\sTheta_s)$ yields a looser bound,
\[ n\norm{\Sigmarow^{1/2} \delta}^2 - cK^2 n^{1/2} \norm{\Sigmarow^{1/2} \delta} \set*{\width_{\Sigma}(\sTheta_s) 
+ \phi\norm{\Sigmacol}^{1/2}\rad(\sTheta_s)(u + n^{-1/2}u^2)}  - cK^2 \width_{\Sigma}^2(\sTheta_s). \]
Take $u = \min\set{n^{1/2},\ v\width_{\Sigma}(\sTheta_s)/\phi\norm{\Sigmacol}^{1/2}\rad(\sTheta_s)}$
and recall that $d_{\Sigma}^{1/2}(\sTheta) = \width_{\Sigma}(\sTheta_s) / $ $ \norm{\Sigmacol}^{1/2}\rad(\sTheta_s)$.
It follows that with probability $1-2\exp\sqb{-\min\set{v^2 \phi^{-2} d_{\Sigma}(\sTheta_s), n}}$, for all $\delta \in \sTheta_s$,
\[ \norm{\varepsilon\delta}^2 \ge n\norm{\Sigmarow^{1/2} \delta}^2 - cK^2(1+v) \width_{\Sigma}(\sTheta_s)  n^{1/2} \norm{\Sigmarow^{1/2} \delta} - 
    cK^2 \width_{\Sigma}^2(\sTheta_s).  \]

\subsubsection{Bounding the third term.}
\label{sec:third-term}
\paragraph{With independent rows.}
Chevet's inequality (Lemma~\ref{lemm:chevet}) implies that when $\varepsilon$ has iid rows,
with probability $1-c\exp(-u^2)$, all $\delta \in \sTheta_{s,r} := \set{\delta \in \sTheta_s : \norm{A\delta} \le r}$
satisify
\begin{align*} 
\abs{ \delta' A' \varepsilon \delta} 
&\le cK\set{ \width(A\sTheta_{s,r}) \rad(\Sigmarow^{1/2}\sTheta_s) + \width(\Sigmarow^{1/2}\sTheta_{s})r
+ u\rad(\Sigmarow^{1/2}\sTheta_s)r }.
\end{align*}
In the notation of Lemma~\ref{lemm:chevet}, this follows by taking 
$V=A\sTheta_{s,r}$, $\xi = \varepsilon \Sigmarow^{-1/2}$, $W=\Sigmarow^{1/2}\sTheta_{s}$.
And taking $v=u\rad(\Sigmarow^{1/2}\sTheta_s)/\width_{\Sigma}(\sTheta_{s})=u d^{-1/2}_{\Sigma}(\sTheta_s)$
and recalling that $\phi=1$ in this case, this implies that with probability $1-c\exp(-v^2 d_{\Sigma}(\sTheta_s))$,
all $\delta \in \sTheta_{s,r}$ satisfy
\begin{align*} 
\abs{ \delta' A' \varepsilon \delta} &\le cK\set{\width(A\sTheta_{s, r}) \phi s + (1+v)\width_{\Sigma}(\sTheta_{s}) r }. 
\end{align*}

\paragraph{With independent columns.}
Similarly, when $\varepsilon$ has iid columns, with probability $1-c\exp(-u^2)$, all $\delta \in \sTheta_{s,r}$ satisfy
\begin{align*}
\abs{ \delta' A' \varepsilon \delta} 
&\le cK\set{ \width(\Sigmacol^{1/2}A\sTheta_{s,r}) \rad(\sTheta_{s}) 
	    + \width(\sTheta_{s})\rad(\Sigmacol^{1/2}A\sTheta_{s,r})  + u \rad(\sTheta_{s}) \rad(\Sigmacol^{1/2}A\sTheta_{s,r})} \\
&\le cK \norm{\Sigmacol}^{1/2} \set{ \width(A\sTheta_{s,r}) \rad(\sTheta_{s}) + \width(\sTheta_{s})r 
	+ u \rad(\sTheta_{s}) r } \\
&\le cK [ 
\width(A\sTheta_{s,r})\phi \rad(\Sigmarow^{1/2}\sTheta_{s}) + \width_{\Sigma}(\sTheta_{s})r 
	+ u \norm{\Sigmacol}^{1/2}\rad(\sTheta_{s}) r.  ] 
\end{align*}
In the notation of Lemma~\ref{lemm:chevet}, this follows by taking 
$V=\sTheta_{s}$,  $\xi = \varepsilon' \Sigmacol^{-1/2}$,  $W=\Sigmacol^{1/2} A\sTheta_{s,r}$.
And taking $v=u \norm{\Sigmacol}^{1/2}\rad(\sTheta_{s}) / \width_{\Sigma}(\sTheta_{s}) = ud^{-1/2}_{\Sigma}(\sTheta_s)$, this implies that with probability $1-c\exp\set{v^2 d_{\Sigma}(\sTheta_s)}$, all $\delta \in \sTheta_{s,r}$ satisfy
\begin{align*} 
\abs{ \delta' A' \varepsilon \delta} 
&\le  cK \set{ \width(A\sTheta_{s,r}) \phi s  + (1+v)\width_{\Sigma}(\sTheta_{s})r}.
\end{align*}
This matches our bound from the case with independent rows. 

\paragraph{A more concrete bound.}
We make our bound above more concrete by substituting a bound on $\width(A\sTheta_{s,r})$.
To bound $\width(A\sTheta_{s,r})$, decompose $A$ as $A_R + (A-A_R)$ where $A_R$ is a rank-$R$ approximation
to $A$. Because $A\sTheta_{s,r} \subseteq A_R\sTheta_{s,r} + (A-A_R)\sTheta_{s,r}$
and $\width(S+T)=\width(S) + \width(T)$ for any $S,T$ \citep[e.g.,][Proposition 7.5.2]{vershynin2018high}, 
$\width(A\sTheta_{s,r}) \le \width(A_R \sTheta_{s,r}) +  \width\set{(A-A_R)\sTheta_{s,r}}$.
Taking $A_R$ to be the best rank-$R$ approximation in terms of operator norm, the first $R$ 
terms of the singular value decomposition $A=\sum_{k} \sigma_k u_k v_k'$, 
$\norm{A_R x} \le \norm{A x}$ for all vectors $x$. 
Thus, $A_R \sTheta_{s,r}$ is contained in the ball of radius $r$ in the $R$-dimensional image of $A_R$,
which has gaussian width bounded by \smash{$c \sqrt{R}r$} \citep[e.g.,][Example 7.5.7]{vershynin2018high}.
And the gaussian width of $(A-A_R)\sTheta_{s,r}$ is bounded by $\norm{A-A_R}\width(\sTheta_{s,r})$ where $\norm{A-A_R} = \sigma_{R+1}(A)$,
the $(R+1)$st singular value of $A$. Thus, on an event of probability $1-c\exp\set{v^2 d_{\Sigma}(\sTheta_s)}$,
all $\delta \in \sTheta_{s,r}$ satisfy
\[ \abs{\delta' A' \varepsilon \delta} 
\le cK\set{ \phi \sqrt{R}rs + \phi \sigma_{R+1}(A)\width(\sTheta_s)s + (v+1)\width_{\Sigma}(\sTheta_s) r}. \]
Now consider $\delta \in \sTheta_s$ with $\norm{A\delta} > r$ and define $\tilde\delta = (r/\norm{A\delta})\delta \in \sTheta_{s,r}$.
As $\abs{\delta' A' \varepsilon \delta} = (\norm{A\delta}^2/r^2) \abs{\tilde\delta' A' \varepsilon \tilde\delta}$,
the bound above implies that all $\delta \in \sTheta_s$ satisfy
\[ \abs{\delta' A' \varepsilon \delta} \le \max(\norm{A\delta}^2/r^2,1) cK\set{ \phi \sqrt{R}rs + \phi \sigma_{R+1}(A)\width(\sTheta_s)s + (v+1)\width_{\Sigma}(\sTheta_s) r}. \]

\subsubsection{Combining bounds}
By the union bound, the bounds derived in Sections~\ref{sec:second-term} and \ref{sec:third-term} hold with probability at least
$1- c \exp\sqb{-\min\set{v^2\phi^{-2} d_{\Sigma}(\sTheta_s), n, v^2 d_{\Sigma}(\sTheta_s)}}$, and together they imply that all $\delta \in \sTheta_s$ satisfy
\begin{align*} 
\norm{X\delta}^2 &\ge \norm{A\delta}^2 + n\norm{\Sigmarow^{1/2} \delta}^2 - cK^2(v+1) \width_{\Sigma}(\sTheta_s)  n^{1/2} \norm{\Sigmarow^{1/2} \delta} - 1_{col} cK^2\width^2_{\Sigma}(\sTheta_s) \\ 
&- \max(\norm{A\delta}^2/r^2,1) cK\set{ \phi \sqrt{R}r s + \phi \sigma_{R+1}(A)\width(\sTheta_s)s + (v+1)\width_{\Sigma}(\sTheta_s) r}.
\end{align*} 
Imposing the constraint $v \ge 1$ justifies replacing $v+1$ with $v$; this yields \eqref{eq:quadratic-signal-term}.

\subsection{Proving \eqref{eq:low-rank-term}}
\label{sec:low-rank-term}
$(\varepsilon\stheta - \nu)'A\delta = z' S^{1/2} A\delta$ where 
$z=S^{-1/2}(\varepsilon\stheta - \nu)$ is a subgaussian vector.
By Talagrand's majorizing measures theorem \citep[Corollary 8.6.3]{vershynin2018high},
this is bounded by $c \norm{z}_{\psi_2} \set{\width(S^{1/2}A\sTheta_{s,r}) + u\rad(S^{1/2}A\sTheta_{s,r})}$
for all $\delta \in \sTheta_{s,r}$ on an event of probability $1-2\exp(-u^2)$.

To bound $\norm{z}_{\psi_2}$, we will use the following consequence of Hoeffding's inequality \citep[e.g.,][Theorem 2.6.3]{vershynin2018high}.
For any vector $x$ and random matrix $\xi$, if $\xi$ has independent rows or columns respectively,
\begin{equation}
\label{eq:subgaussian-norm-mv}
\begin{aligned}
\norm{\xi x}_{\psi_2}^2 := \sup_{\norm{y}\le 1}\norm{y'\xi x}_{\psi_2}^2 
&\le \sup_{\norm{y} \le 1} c\sum_i y_i^2\norm{\xi_{i \cdot} x}_{\psi_2}^2 
\le c\norm{x}^2 \max_i \norm{\xi_{i \cdot}}_{\psi_2}^2, \\
&\le \sup_{\norm{y} \le 1} c\sum_j \norm{y'\xi_{\cdot j}}_{\psi_2} x_j^2  
\le  c\norm{x}^2 \max_j \norm{\xi_{\cdot j}}_{\psi_2}^2.
\end{aligned}
\end{equation}

\paragraph{With independent rows.}
When the rows of $[\varepsilon, \nu]$ are independent, we take $S=I$. 
By the triangle inequality, \eqref{eq:subgaussian-norm-mv} and Hoeffding's inequality, and the definition of $K$, 
\begin{align*}
\norm{z}_{\psi_2} 
&\le \norm{\varepsilon (\stheta - \psi)}_{\psi_2} + \norm{\varepsilon \psi - \nu}_{\psi_2} \\
&\le c\max_i \norm{\varepsilon_{i \cdot}\Sigmarow^{-1/2}}_{\psi_2}\norm{\Sigmarow^{1/2}(\stheta - \psi)} +  c\max_i \norm{\varepsilon_{i\cdot} \psi - \nu_i}_{\psi_2} \\
&\le c K \set{\norm{\Sigmarow^{1/2}(\stheta - \psi)} +  \norm{\varepsilon_{i\cdot} \psi - \nu_i}_{L_2}}.
\end{align*}
Thus,  on an event of probability $1-2\exp(-u^2)$, for all $\delta \in \sTheta_{s,r}$,
\begin{align*}
\abs{(\varepsilon \stheta-\nu)' A \delta} &\le cK 
   \set{\norm{\Sigmarow^{1/2}(\stheta - \psi)} +  \norm{\varepsilon_{i\cdot} \psi - \nu_i}_{L_2}}
   \set{\width(A\sTheta_{s,r}) + u\rad(A\sTheta_{s,r})} \\
&\le cKp_{eff,\Sigma}^{-1/2} \set{\sqrt{R}r + \sigma_{R+1}(A)\width(\sTheta_s)+ ur}.
\end{align*}
To derive the latter bound, we recall that the first parenthesized factor in the former is $p_{eff,\Sigma}$ \eqref{eq:recall-common-notation} 
and substitute the bound on $\width(A\sTheta_{s,r})$ derived in Section~\ref{sec:third-term} above.

\paragraph{With independent columns.}
When the columns of $[\varepsilon, \nu]$ are independent, we use a variation of this argument,
beginning with the triangle inequality bound $\abs{(\varepsilon\stheta - \nu)'A\delta} \le \abs{z_1' \Sigmacol^{1/2}A\delta} + \abs{z_2' \Sigmanu^{1/2}A\delta}$
for $z_1 = \Sigmacol^{-1/2}\varepsilon\stheta$ and $z_2 = \Sigmanu^{-1/2}\nu$. 
Here $\norm{z_1}_{\psi_2} \le cK\norm{\stheta}$ and $\norm{z_2}_{\psi_2} \le K$ by \eqref{eq:subgaussian-norm-mv} and the definition of $K$.
By Talagrand's majorizing measures theorem and the union bound, on an event of probabiltity $1-4\exp(-u^2)$, for all $\delta \in \sTheta_{s,r}$, 
\begin{align*} 
\abs{(\varepsilon\stheta - \nu)'A\delta} &\le  
  c \norm{z_1}_{\psi_2} \set{\width(\Sigmacol^{1/2}A\sTheta_{s,r}) + u\rad(\Sigmacol^{1/2}A\sTheta_{s,r})} \\ 
  &+ c \norm{z_2}_{\psi_2} \set{\width(\Sigmanu^{1/2}A\sTheta_{s,r}) + u\rad(\Sigmanu^{1/2}A\sTheta_{s,r})} \\
&\le cK (\norm{\Sigmacol}^{1/2}\norm{\stheta} +  \norm{\Sigmanu}^{1/2})\set{\width(A\sTheta_{s,r}) + u\rad(A\sTheta_{s,r})} \\
&\le cKp_{eff,\Sigma}^{-1/2} \set{\sqrt{R}r + \sigma_{R+1}(A)\width(\sTheta_s)+ ur}.
\end{align*}
The derive the second bound, we use the inequalities $\width(\Sigma^{1/2}T) \le \norm{\Sigma}^{1/2}\width(T)$ 
and $\rad(\Sigma^{1/2}T) \le \norm{\Sigma}^{1/2}\rad(T)$; to derive the third, we recall that the first parenthesized
factor in the second is $p_{eff,\Sigma}$ \eqref{eq:recall-common-notation}.
Modulo differing definitions of $p_{eff,\Sigma}$, this is the same bound as in the case of independent rows.

\paragraph{Generalization outside $\sTheta_{s,r}$.}
Taking $u=v\sqrt{R}$, this common bound implies that on an event of probability $1-4\exp(-v^2 R)$,
for all $\delta \in \sTheta_{s,r}$, 
\[  \abs{(\varepsilon \stheta-\nu)' A \delta} \le c(1+v)Kp_{eff,\Sigma}^{-1/2} \set{v\sqrt{R} r + \sigma_{R+1}(A)\width(\sTheta_s)}. \]
Now consider $\delta \in \sTheta_s$ with $\norm{A\delta} > r$ and define $\tilde\delta = (r/\norm{A\delta})\delta \in \sTheta_{s,r}$.
As  $\abs{(\varepsilon \stheta-\nu)' A \delta}= (\norm{A\delta}/r) \abs{(\varepsilon \stheta-\nu)' A \tilde\delta}$,
on the same event all $\delta \in \sTheta_s$ satisfy
\[ \abs{(\varepsilon \stheta-\nu)' A \delta} \le \max(\norm{A\delta}/r,1) c(1+v)Kp_{eff,\Sigma}^{-1/2} \set{v\sqrt{R} r + \sigma_{R+1}(A)\width(\sTheta_s)}. \]

\subsection{Proving \eqref{eq:oracle-error-term}}
\label{sec:oracle-error-term}
The quantity bounded is $\smallabs{z' S^{1/2} \delta}$ where $S^{1/2} \delta \in S^{1/2}\sTheta_s$ 
and $z=(A\stheta - b)' \varepsilon S^{-1/2}$ is a subgaussian vector. 
By Talagrand's majorizing measures theorem \citep[Corollary 8.6.3]{vershynin2018high}, on an event of probability $1-2\exp(-u^2)$, 
this is bounded for all $\delta \in \sTheta_s$ by $c \norm{z}_{\psi_2} \set{\width(S^{1/2} \sTheta_s) + u \rad(S^{1/2} \sTheta_s)}$.

When $[\varepsilon, \nu]$ has independent rows, we take $S=\Sigmarow$, 
and $\norm{z'}_{\psi_2}=\norm{(\varepsilon \Sigmarow^{-1/2})' (A\stheta - b)}_{\psi_2} \le cK\norm{A\stheta - b}$ by \eqref{eq:subgaussian-norm-mv}.
When it has independent columns, we take $S=I$,
and $\norm{z'}_{\psi_2}=\norm{(\Sigmacol^{-1/2}\varepsilon)'\set{\Sigmacol^{1/2}(A\stheta - b)}}_{\psi_2} \le cK\norm{\Sigmacol^{1/2}(A\stheta - b)}$.
Thus, the following bounds hold for all $\delta \in \sTheta_s$ on an event of probability $1-2\exp(-u^2)$.
\begin{align*}
\smallabs{z' S^{1/2} \delta} 
&\le cK \norm{A\stheta - b} (\width(\Sigmarow^{1/2} \sTheta_s) + u \rad(\Sigmarow^{1/2} \sTheta_s))  && \\
&\le cK \norm{A\stheta - b} (\width_{\Sigma}(\sTheta_s) + u \rad(\Sigmarow^{1/2} \sTheta_s)) && \text{ind. rows} \\
\smallabs{z' S^{1/2} \delta} 
&\le cK \norm{\Sigmacol^{1/2}(A\stheta - b)} (\width(\sTheta_s) + u \rad(\sTheta_s)) && \\
&\le cK \norm{A\stheta - b} \norm{\Sigmacol}^{1/2} (\width(\sTheta_s) + u \rad(\sTheta_s)) && \\
&\le cK \norm{A\stheta - b} (\width_{\Sigma}(\sTheta_s) + u \norm{\Sigmacol}^{1/2}\rad(\sTheta_s)) && \text{ind. columns}.
\end{align*}
Taking $v = u\rad(\Sigmarow^{1/2} \sTheta_s) / \width_{\Sigma}(\sTheta_s)$ in the first case
and    $v = u\norm{\Sigmacol}^{1/2}\rad(\sTheta_s) / \width_{\Sigma}(\sTheta_s)$ in the second, for which we have the common notation
$v= ud_{\Sigma}^{-1/2}(\sTheta_s)$, this implies that on an event of probability $1-2\exp\set{-v^2 d_{\Sigma}(\sTheta_s)}$,
\[ \smallabs{z' S^{1/2} \delta} \le c(1+v)K \norm{A\stheta - b} \width_{\Sigma}(\sTheta_s) \quad \text{ for all } \delta \in \sTheta_s. \]

\subsubsection{Proving \eqref{eq:quadratic-noise-term}}
We will use different arguments in the cases of independent rows and independent columns. 
However, our conclusions are the same.
With probability $1-6\exp[-c \min\set{v^2 d_{\Sigma}(\sTheta_s), n}]$,
\[ \sup_{\delta \in \sTheta_s} \abs*{(\varepsilon\stheta - \nu)' \varepsilon\delta - \E (\varepsilon\stheta - \nu)' \varepsilon\delta} \le
 cv K^2 (n/p_{eff,\Sigma})^{1/2} \width_{\Sigma}(\sTheta_s). \]

\paragraph{With independent rows.}
This is a multiplier process in the sense of \citet[Theorem 4.4]{mendelson2016upper}. Using their notation,
for $\xi_i = \varepsilon_i\stheta - \nu_i$ and $\F_s = \set{ x' \delta : \delta \in \sTheta_s}$,
\[ \sup_{\delta \in \sTheta_s} \abs*{(\varepsilon\stheta - \nu)' \varepsilon\delta - \E (\varepsilon\stheta - \nu)' \varepsilon\delta} =
    \sup_{f \in \F_s} \abs*{\sum_{i=1}^n \xi_i f(\varepsilon_i) - \E \xi_i f(\varepsilon_i)} \]
and it is bounded by $cv n^{1/2}\norm{\xi_i}_{\psi_2}\tilde\Lambda_{s_0,2}(\F_s)$
with probability $1-4\exp\set{-c \min(v^2 2^{s_0}, n)}$ for any nonnegative integer $s_0$ and $v \ge 1$. Here, 
\[ \tilde\Lambda_{s_0,2}(\F_s) 
\le c\set{ \gamma_2(\F_s, \norm{\cdot}_{\psi_2}) +  2^{s_0/2}\sup_{f \in \F_s} \norm{f}_{\psi_2}}
\le cK\set{\width_{\Sigma}(\sTheta_s) + 2^{s_0/2} \rad(\Sigmarow^{1/2} \sTheta_s)}. \]
The first bound follows directly from the definition of $\tilde \Lambda$ in \citet[][see Definition 1.7 and subsequent discussion]{mendelson2016upper}.
The second follows via Talagrand's majorizing measures theorem \citep[e.g.,][Theorem 8.6.1]{vershynin2018high}
as a consequence of the norm bound
\[ \norm{f}_{\psi_2} = \norm{\varepsilon_i\Sigmarow^{-1/2} \Sigmarow^{1/2}\delta}_{\psi_2} \le 
\norm{\varepsilon_i \Sigmarow^{-1/2}}_{\psi_2}\norm{\Sigmarow^{1/2}\delta}_2 \le K \norm{\Sigmarow^{1/2}\delta}_2
 \quad \text{ for } \quad f(\varepsilon_i) = \varepsilon_i \delta.
\]
Choosing the largest $s_0$ for which $2^{s_0/2} \le \width_{\Sigma}(\sTheta_s)/\rad(\Sigmarow^{1/2}\sTheta_s) = d^{1/2}_{\Sigma}(\sTheta_s)$,
our bound simplifies to $cv n^{1/2}\norm{\xi_i}_{\psi_2} K \width_{\Sigma}(\sTheta_s)$
and holds with probability $1-4\exp[-c \min\set{v^2d_{\Sigma}(\sTheta_s), n}]$.
We bound $\norm{\xi_i}_{\psi_2}$ via triangle inequality as follows.
\begin{align*}
\norm{\xi_i}_{\psi_2} 
&= \norm{\varepsilon_{i\cdot}(\stheta  - \psi)}_{\psi_2} + \norm{\varepsilon_{i\cdot}\psi - \nu_i}_{\psi_2} \\
&= \norm{\varepsilon_{i\cdot}\Sigmarow^{-1/2}}_{\psi_2} \norm{\Sigmarow^{1/2}(\stheta  - \psi)}_{\psi_2} + \norm{\varepsilon_{i\cdot}\psi - \nu_i}_{\psi_2} \\
&\le K\norm{\Sigmarow^{1/2}(\stheta  - \psi)} + K\norm{\varepsilon_{i\cdot}\psi - \nu_i}_{L_2} = Kp_{eff,\Sigma}^{-1/2}. 
\end{align*}
Substituting this bound, with probability $1-4\exp[-c  \min\set{v^2d_{\Sigma}(\sTheta_s), n}]$,
\begin{align*}
&\sup_{\delta \in \sTheta_s} \abs*{(\varepsilon\stheta - \nu)' \varepsilon\delta - \E (\varepsilon\stheta - \nu)' \varepsilon\delta} 
\le cv K^2 (n/p_{eff,\Sigma})^{1/2} \width_{\Sigma}(\sTheta_s).
\end{align*}

\paragraph{With independent columns.}
By the triangle inequality, because $\nu$ and $\varepsilon$ are independent and therefore $\E \nu' \varepsilon \delta = 0$,
\begin{align*} 
\sup_{\delta \in \sTheta_s} \abs{(\varepsilon\stheta - \nu)' \varepsilon\delta - \E (\varepsilon\stheta - \nu)' \varepsilon\delta} 
&\le \sup_{\delta \in \sTheta_s} \abs{\stheta' \varepsilon' \varepsilon \delta - \E \stheta ' \varepsilon' \varepsilon\delta} 
+   \sup_{\delta \in \sTheta_s} \abs{\nu' \varepsilon\delta}.
\end{align*}

The first term here is the supremum of a quadratic subgaussian chaos process in the sense of Lemma~\ref{lemm:weighted-inner-product-bound}. 
Taking $\xi = \varepsilon'$, $v=\stheta$, $W=\Theta_s$ and $\Sigma=\Sigmacol$ in that lemma, with probability $1-2\exp(-u^2)$ for $u \ge 1$, 
\[ \sup_{\delta \in \sTheta_s}\abs{\stheta'(\xi \xi' - \E \xi \xi')\delta }
\le cK^2\norm{\Sigmacol} \norm{\stheta}(n^{1/2} + u)\set{\width(\Theta_s) + u\rad(\sTheta_s)}. \]

The second term here is, conditional on $\nu$, a subgaussian process with increments satisfying
$\norm{\nu' \varepsilon (y-x)}_{\psi_2} \le c\max_j \norm{\nu' \varepsilon_{\cdot j}}_{\psi_2} \norm{y-x}$
by Hoeffding's inequality \citep[e.g.,][Theorem 2.6.3]{vershynin2018high}. Thus, by generic chaining \eqref{eq:generic-chaining},
\[  \sup_{\delta \in \sTheta_s} \abs{\nu' \varepsilon \delta} \le c \max_j \norm{\nu' \varepsilon_{\cdot j}}_{\psi_2}\set{\width(\sTheta_s) + u\rad(\sTheta_s)} \]
with conditional probability $1-2\exp(-u^2)$. Furthermore, as 
$\norm{\nu' \varepsilon_{\cdot j}}_{\psi_2} \le \norm{\Sigmacol^{1/2} \nu} 
						\norm{\Sigmacol^{-1/2}\varepsilon_{\cdot j}}_{\psi_2}$,
it follows that with probability $1-2\exp(-u^2)$,
\[ \sup_{\delta \in \sTheta_s} \abs{\nu' \varepsilon \delta} \le c K \norm{\Sigmacol^{1/2} \nu}\set{\width(\sTheta_s) + u\rad(\sTheta_s)}. \]
To bound $\norm{\Sigmacol^{1/2} \nu}$, we use generic chaining again. 
Letting $B:=\set{y \in \R^n : \norm{y} \le 1}$, 
\[ \norm{\Sigmacol^{1/2} \nu} = \sup_{y \in B} y'\Sigmacol^{1/2}\Sigmanu^{1/2} \Sigmanu^{-1/2}\nu
\le c\norm{\Sigmanu^{-1/2} \nu}_{\psi_2} \set{\width(\Sigmanu^{1/2}\Sigmacol^{1/2} B) + u \rad(\Sigmanu^{1/2}\Sigmacol^{1/2}  B)} \]
with probability $1-2\exp(-u^2)$. 
Furthermore, as $K \ge \norm{\Sigmanu^{-1/2} \nu}_{\psi_2}$,
$\width(B) \le cn^{1/2}$, and $\rad(B)=1$, this implies the simplified bound 
$\norm{\Sigmacol^{1/2}\nu} \le cK\norm{\Sigmacol}^{1/2}\norm{\Sigmanu}^{1/2}(n^{1/2} + u)$.
Via the union bound, it follows that with probability $1-4\exp(-u^2)$, 
\begin{align*}
\sup_{\delta \in \sTheta_s} \abs{\nu' \varepsilon \delta} 
&\le c K^2 \norm{\Sigmacol}^{1/2}\norm{\Sigmanu}^{1/2}(n^{1/2} + u)\set{\width(\sTheta_s) + u\rad(\sTheta_s)}.
\end{align*}
Combining this with our bound on the first term, it follows that with probability $1-6\exp(-u^2)$ via union bound,
\begin{align*} 
&\sup_{\delta \in \sTheta_s} \abs*{(\varepsilon\stheta - \nu)' \varepsilon\delta - \E (\varepsilon\stheta - \nu)' \varepsilon\delta} \\ 
&\le cK^2\norm{\Sigmacol}^{1/2}(\norm{\Sigmacol}^{1/2}\norm{\stheta} + \norm{\Sigmanu}^{1/2})(n^{1/2} + u)(\width(\Theta_s) + u\rad(\sTheta_s)) \\ 
&\le cK^2(\norm{\Sigmacol}^{1/2}\norm{\stheta} + \norm{\Sigmanu}^{1/2})(n^{1/2} + u)\set{\width_{\Sigma}(\Theta_s) + u \norm{\Sigmacol}^{1/2}\rad(\sTheta_s)}. 
\end{align*}

Taking $u= \min\set{v\width_{\Sigma}(\sTheta_s)/\norm{\Sigmacol}^{1/2}\rad(\sTheta_s), n^{1/2}}$, it follows that 
\begin{align*}
&\sup_{\delta \in \sTheta_s} \abs*{(\varepsilon\stheta - \nu)' \varepsilon\delta - \E (\varepsilon\stheta - \nu)' \varepsilon\delta} \le
 cv K^2 (n/p_{eff,\Sigma})^{1/2} \width_{\Sigma}(\sTheta_s)
\end{align*}
with probability $1-6\exp[\min\set{v^2 d_{\Sigma}(\sTheta_s), n}]$. 

\subsection{The case with intercept.}
\label{sec:with-intercept}
We consider a variant of this regression problem for $\bar X = \bar A + \bar \varepsilon$, where $\bar A$ and $\bar \varepsilon$ 
are augmented with columns of ones and zeros respectively. As our result follows from the bounds \eqref{eq:quadratic-signal-term}-\eqref{eq:quadratic-noise-term},
it is sufficient to show that the same bounds hold in the augmented problem. We will
write $[v; w] \in \R^{m+n}$ denoting the concatenation of vectors $v \in \R^m$ and $w \in \R^n$
and $\bar \delta$ referring to $[\theta_0 - \tilde \theta_0;\ \theta - \tilde\theta] \in \R^{1+p}$.

As $\bar \varepsilon [x_0; x] = \varepsilon x$ for any $x_0 \in \R$ and $x \in \R^p$, 
the bounds \eqref{eq:oracle-error-term} and \eqref{eq:quadratic-noise-term} are implied by the
corresponding bounds in the case without intercept if we substitute $\bar A [\tilde \theta_0; \tilde\theta] - b$
for $A\tilde\theta - b$ in \eqref{eq:oracle-error-term}.

We can show that \eqref{eq:quadratic-signal-term} need not be changed
for the augmented problem by working with the decomposition used in its proof:
\[ \norm{\bar X \bar\delta}^2 = \norm{\bar A \bar \delta}^2 + \norm{\bar\varepsilon\bar\delta}^2 + 2 (\bar A \bar \delta)' \bar \varepsilon \bar\delta. \]
The second term is equal to $\norm{\varepsilon\delta}^2$, so it satisfies the same lower bound as in the case without intercept.
And we can bound the second via Chevet's inequality as in the case without intercept; in the resulting bound  
$\width(A \sTheta_{s,r})$ is replaced by an analog $\width(\bar A \bsTheta_{s,r})$ 
with $\bsTheta_{s,r} = \set{[\delta_0; \delta] \in \R \times \sTheta : \norm{\Sigmarow^{1/2} \delta} \le s, \norm{\bar A [\delta_0; \delta]} \le r}$. 
We will show below that $\width(\bar  A \bsTheta_{s,r})$ satisfies the same bound that $\width(A \sTheta_{s,r})$ does in the case without intercept,
so we need not change \eqref{eq:quadratic-signal-term}.
 
Finally, the argument used to derive the bound \eqref{eq:low-rank-term} yields an analog in which again $\width(A \sTheta_{s,r})$ is replaced by 
 $\width(\bar A \bsTheta_{s,r})$, so again we not change \eqref{eq:low-rank-term} for the case with intercept.

We conclude by showing that $\width(\bar A \bsTheta_{s,r}) \le c\sqrt{R}r + \sigma_{R+1}(A) \width(\sTheta_{s})$,
matching our bound on $\width(A\sTheta_{s,r})$ from the case without intercept. 
For every $(\delta_0, \delta) \in \bsTheta_{s,r}$, 
$\delta_0 + A \delta = \delta_0' + A\delta  + \delta_0 - \delta_0'$ for $\delta_0' = -n^{-1}1_n' A\delta$.
Here $\norm{\delta_0' + A\delta} \le \norm{\delta_0 + A\delta} \le r$ because $\delta_0'=\argmin_{\delta_0''}\norm{\delta_0'' + A\delta}$;
furthermore, $\abs{\delta_0 - \delta_0'} \le 2n^{-1/2}r$, as $(\delta_0 - \delta_0')1_n = (\delta_0 + A \bar\delta) - (\delta_0' + A\delta)$
and therefore by the triangle inequality  $\abs{\delta_0 - \delta_0'}\norm{1_n} \le \norm{\delta_0 + A \bar\delta} + \norm{\delta_0' + A\delta} \le 2r$.
As we can write $\delta_0' + A\delta$ equivalently as $\tilde A\delta$ for $\tilde A = (I-n^{-1}1_n 1_n')A$, it follows 
that $\bar  A \bsTheta_{s,r} \subseteq n^{-1/2}1_n [-2r, 2r] + \tilde A \tilde\Theta^{\star}_{s,r}$  
where $\tilde \Theta^{\star}_{s,r} = \set{ \delta \in \sTheta : \norm{\tilde A \delta} \le r, \norm{\Sigmarow \delta} \le s}$
and therefore that $\width(\bar  A \bsTheta_{s,r}) \le \width( n^{-1/2}1_n [-2r, 2r] ) + \width(\tilde A \tilde\Theta^{\star}_{s,r})$.
The second term here has the same form as the quantity $\width(A \sTheta_{s,r})$ 
we considered in the case without intercept, with $\tilde A$ replacing $A$,
and via our arguments from Section~\ref{sec:third-term},
it is bounded by  $c \sqrt{R}r + \sigma_{R+1}(\tilde A) \width(\sTheta_{s})$ for any nonnegative integer $R$. 
And the first term here, $\width( n^{-1/2}1_n [-2r, 2r] )$, is bounded by $cr$ and therefore dominated by
our bound on the second term: because the distribution of a standard gaussian vector is rotation invariant 
and the unit vector $n^{-1/2}1_n$ can be rotated into the first standard basis vector $e_1$, 
$\width( n^{-1/2}1_n [-2r, 2r] )=\width(e_1 [-2r,2r]) = 2r \E \abs{g_1}$ for a standard normal random variable $g_1$. 
Observing that $\sigma_{R+1}(\tilde A) \le \sigma_{R+1}(A)$ because $\tilde A$ is a projection matrix times $A$, 
we get our claimed bound.

\section{Lemmas}

In this section, we collect lemmas used in the proof above.
Throughout, we will take $\xi$ to be a random matrix with independent rows $\xi_1 \ldots \xi_m$.
The first, a version of Chevet's inequality, bounds inner products through a random matrix.
\begin{lemma}
\label{lemm:chevet}
Let $\xi$ be a real $m \times n$ matrix with independent mean zero rows satisfying $\norm{\xi_i}_{\psi_2} \le K$,
and $V$ and $W$ be subsets of $\R^m$ and $\R^n$ containing zero. With probability $1-c\exp(-u^2)$,
\[ \sup_{v \in V, w \in W} v' \xi w \le cK[\width(V)\rad(W) + \rad(V)\width(W) + u \rad(V)\rad(W)]. \]
\end{lemma}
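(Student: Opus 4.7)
The plan is to treat $Z_{v,w} := v'\xi w$ as a subgaussian process on the product index set $V \times W$ and apply generic chaining. First, since $v'\xi w = \sum_{i=1}^m v_i \,\xi_i' w$ is a sum of independent mean-zero subgaussian random variables, the Hoeffding-type bound \eqref{eq:subgaussian-norm-mv} gives $\norm{v'\xi w}_{\psi_2} \le cK\norm{v}\norm{w}$. Splitting
\[ Z_{v,w} - Z_{v',w'} = (v-v')'\xi w + v'\xi(w-w') \]
and applying this bound to each term yields the increment estimate
\[ \norm{Z_{v,w} - Z_{v',w'}}_{\psi_2} \le cK\bigl[\norm{v-v'}\,\rad(W) + \rad(V)\,\norm{w-w'}\bigr], \]
so $Z$ has subgaussian increments with respect to the metric $d((v,w),(v',w')) := cK[\rad(W)\norm{v-v'} + \rad(V)\norm{w-w'}]$ on $V\times W$.

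Next, I would invoke the standard generic chaining tail bound (Dirksen's or Talagrand's version): for a process with subgaussian increments relative to $d$, choosing the basepoint $(0,0)\in V\times W$ and using $Z_{0,0} = 0$,
\[ \sup_{(v,w) \in V \times W} Z_{v,w} \le c\bigl[\gamma_2(V\times W, d) + u\cdot \diam_d(V\times W)\bigr] \]
with probability at least $1-c\exp(-u^2)$. Because $d$ is an additive sum of two rescaled Euclidean metrics, $\gamma_2$ is subadditive in the natural way,
\[ \gamma_2(V\times W, d) \le cK\bigl[\rad(W)\,\gamma_2(V,\norm{\cdot}) + \rad(V)\,\gamma_2(W,\norm{\cdot})\bigr], \]
and Talagrand's majorizing measures theorem identifies $\gamma_2(\cdot,\norm{\cdot})$ with gaussian width up to a universal constant, producing the two mixed ``$\width\cdot\rad$'' terms of the claimed bound. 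The diameter is clearly at most $2cK\rad(V)\rad(W)$, which supplies the $u\,\rad(V)\rad(W)$ tail term. Absorbing universal constants into $c$ gives the statement.

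The main technical step, and the only place care is needed, is the passage through the product metric: the specific form of the bound (mixed $\width(V)\rad(W) + \rad(V)\width(W)$ rather than $\width(V)\width(W)$) relies on the sum-of-metrics structure of $d$ and the subadditivity of $\gamma_2$ under such sums. An alternative route --- conditioning on $\xi$ to sup over $W$ first, then netting over $V$ --- typically introduces spurious logarithmic factors, so the direct generic chaining argument on $V\times W$ is the clean one.
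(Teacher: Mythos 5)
Your proof is correct and reaches the claimed bound, but it routes through a different technical lemma at the crux. Both arguments set up generic chaining on the product $V\times W$ from the increment estimate $\norm{Z_{v,w}-Z_{v',w'}}_{\psi_2}\le cK\bigl[\rad(W)\norm{v-v'}+\rad(V)\norm{w-w'}\bigr]$; the difference is how $\gamma_2(V\times W, d)$ is bounded. You keep the additive metric and invoke the subadditivity of $\gamma_2$ under sums of pseudometrics --- true, but not entirely elementary (it follows from interleaving admissible sequences with a one-level index shift, and is worth stating as its own lemma rather than asserting in passing) --- then apply Talagrand's majorizing measures theorem separately to $\gamma_2(V,\norm{\cdot})$ and $\gamma_2(W,\norm{\cdot})$. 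The paper instead replaces the additive increment bound by the comparable quadratic metric $d^2=\rad(W)^2\norm{v-v'}^2+\rad(V)^2\norm{w-w'}^2$ and applies the majorizing measures theorem once, directly to the gaussian process $Y_{v,w}=\rad(W)\,g'v+\rad(V)\,h'w$ (with $g,h$ independent standard gaussians), whose canonical $L_2$ metric matches $d$ and whose expected supremum is exactly $\rad(W)\width(V)+\rad(V)\width(W)$. The two routes are close cousins; the paper's avoids the $\gamma_2$-subadditivity lemma by building the comparison gaussian process explicitly, which keeps the argument self-contained given only the standard MM theorem, while your version is cleaner if you are comfortable citing the subadditivity fact.
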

\noindent Because $w' \xi' v = v' \xi w$ and this bound is symmetric in $V$ and $W$, 
it also applies in the case that $\xi$ has independent columns.

The second, which we will use to bound quadratic forms $v'\xi \xi'v$,
is a variant of \citet[Theorem 6.5]{dirksen2015tail}, which is itself a refinement of a bound of 
\citet*{krahmer2014suprema}. For this bound and a subsequent one, we require that the squared norm of each row of $\xi$ 
concentrates around its mean essentially like (in the case $K=1$) that of a gaussian vector with the same covariance would, satisfying a Bernstein-type inequality
\begin{equation}
\label{eq:norm-bernstein}
P\p{\abs{\norm{\xi_i}^2 - \E \norm{\xi_i}^2 } \ge u} \le c\exp\p{-c \min\p{\frac{u^2}{K^4 \E\norm{\xi_i}^2},\  \frac{u}{K^2 \norm{\Sigma_i}}}}
 \ \text{ where } \ \Sigma_i = \E \xi_i \xi_i'.
\end{equation}
This condition is not implied by subgaussianty for any reasonable constant $K$. 
For example, the squared norm of the subgaussian vector $xg$, where $g$ is a gaussian vector and $x$ is Bernoulli($1/2$), does not concentrate around its mean at all. On the other hand, the Hanson-Wright inequality implies that it will be satisfied if the components of $\xi_i$ are mixtures of independent subgaussian random variables \citep[e.g.,][Theorem 6.2.1]{vershynin2018high}.

\begin{lemma}
\label{lemm:norm-of-sum-bound}
Let $\calA$ be a subset of $\R^m$ and $\xi \in R^{m \times n}$ be a matrix with independent mean-zero rows 
with common covariance matrix $\Sigma$, subgaussian norm $\norm{\xi_i\Sigma^{-1/2}}_{\psi_2} \le K$ for $K \ge 1$, and norms satisfying \eqref{eq:norm-bernstein}.
With probability $1-2\exp(-u^2)$ for $u \ge 1$,
\begin{align*}
&\sup_{a \in \calA} \abs*{\norm*{a'\xi}^2 - \E \norm*{a' \xi}^2 } \\ 
&\le cK^2 \sqb{ \norm{\Sigma}^{1/2} \trace^{1/2}(\Sigma) \width(\calA) \rad(\calA) + \norm{\Sigma} \width^2(\calA) + \norm{\Sigma}\rad^2(\calA)(n^{1/2}u + u^2)}. 
\end{align*}
\end{lemma}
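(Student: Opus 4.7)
My plan is to follow the generic-chaining strategy for suprema of quadratic chaos processes developed by \citet{krahmer2014suprema} and refined by \citet{dirksen2015tail}, adapted to allow rows $\xi_i$ that are subgaussian random vectors with possibly correlated coordinates rather than vectors with independent subgaussian entries. Exploiting independence of the rows, I decompose the centered quadratic form
\[
X_a := a'\xi\xi' a - \trace(\Sigma)\norm{a}^2 \;=\; \sum_{i=1}^m a_i^2\p*{\norm{\xi_i}^2 - \trace(\Sigma)} \;+\; \sum_{i \ne i'} a_i a_{i'}\, \xi_i \cdot \xi_{i'}
\]
into a diagonal and an off-diagonal piece. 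The diagonal piece is a sum of independent centered sub-exponentials, each obeying the Bernstein-type bound \eqref{eq:norm-bernstein} by hypothesis. The off-diagonal piece, after decoupling in the sense of \citet[Thm.~4.2.7]{de2012decoupling}, becomes a chaos in two independent copies of $\xi$, to which a Hanson--Wright-type inequality for sums of independent subgaussian vectors applies.

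Combining these two inputs, I expect the increment $X_a - X_b$ to satisfy a mixed sub-Gaussian/sub-exponential tail
\[
P\p*{\abs{X_a - X_b} \ge t} \le 2\exp\!\set*{-c\min\p*{\tfrac{t^2}{K^4\norm{\Sigma}\trace(\Sigma)\,\rho^2(a,b)},\ \tfrac{t}{K^2\norm{\Sigma}\,\rho^2(a,b)}}},
\]
with $\rho(a,b) \lesssim \norm{a-b}\,(\norm{a}\vee \norm{b})$. Feeding this into the Talagrand--Dirksen chaining bound for mixed-tail processes controls $\sup_{a\in\calA}\abs{X_a}$ by the $\gamma_2$ and $\gamma_1$ functionals of $\calA$ under the two induced metrics. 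The product structure of $\rho$ lets me factor out $\rad(\calA)$; the residual $\gamma_2$ reduces to $\width(\calA)$ by Talagrand's majorizing-measures theorem, producing the first two stated terms $\norm{\Sigma}^{1/2}\trace^{1/2}(\Sigma)\width(\calA)\rad(\calA)$ and $\norm{\Sigma}\width^2(\calA)$, while the $(n^{1/2}u + u^2)$ deviation pieces arise from the sub-Gaussian and sub-exponential diameters of $\calA$ multiplied by $\norm{\Sigma}\rad^2(\calA)$, using $\trace^{1/2}(\Sigma) \le n^{1/2}\norm{\Sigma}^{1/2}$ to convert the natural Frobenius prefactor into the stated $n^{1/2}$.

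The principal obstacle is the off-diagonal Hanson--Wright step: the classical inequality requires independent subgaussian \emph{coordinates}, whereas here only the rows are independent and the coordinates within a row may be arbitrarily correlated. I would sidestep this by invoking a vector-valued Hanson--Wright inequality for independent subgaussian vectors (e.g., Adamczak's extension, which depends only on the whitened subgaussian norm bound $\norm{\xi_i \Sigma^{-1/2}}_{\psi_2} \le K$), or, failing that, by deriving what is needed from scratch via decoupling together with our Chevet-type Lemma~\ref{lemm:chevet}. A secondary bookkeeping difficulty is verifying that the Bernstein contribution from the diagonal part and the Hanson--Wright contribution from the off-diagonal part combine cleanly, without introducing extra dependence on $n$, $K$, or the ratio $\trace(\Sigma)/\norm{\Sigma}$ beyond what appears in the final bound.
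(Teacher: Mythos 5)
Your route diverges from the paper's at the critical step, and the divergence hides the main technical difficulty rather than resolving it. The obstruction is that the increment distance you write down, $\rho(a,b)\lesssim\norm{a-b}\,(\norm{a}\vee\norm{b})$, is not a pseudometric (and neither is $\rho^2$), so ``the Talagrand--Dirksen chaining bound for mixed-tail processes'' cannot be fed this $\rho$ directly. More to the point, even if you crudely replace $\rho$ by $\rad(\calA)\norm{a-b}$ to restore the triangle inequality, the $\gamma_1$ functional under the resulting sub-exponential metric does not reduce to $\width^2(\calA)$ --- it gives $\gamma_1(\calA,\norm{\cdot})$, which for general $\calA$ is neither $\gamma_2^2$ nor controlled by it. The $\norm{\Sigma}\width^2(\calA)$ term in the statement is precisely the $\gamma_2^2$ contribution that distinguishes the \citet{krahmer2014suprema}/\citet{dirksen2015tail} result from what naive chaining yields, and obtaining it is the heart of their (and the paper's) argument.

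The paper does not chain on the quadratic form with a product-form distance at all. It splits the supremum into a coarse term $B$ over a finite admissible net $\calA_\ell$ (bounded by Hanson--Wright and a union bound over $2^p$ points, which produces the $n^{1/2}u + u^2$ deviation piece) and a chaining-tail term $C$. For $C$, it chains conditionally on the decoupled copy $\tilde\xi$: the increment is a linear process in $\xi$ whose $\psi_2$-norm is proportional to $\norm{\sum_j a_j\tilde\xi_j}$, which is then bounded by $\sqrt{S + \trace(\Sigma)\rad^2(\calA)}$ via H\"older. This makes $\norm{C}_{L_p}$ \emph{self-referential}: $\norm{S}_{L_p} \le 2\alpha\norm{S}_{L_p}^{1/2} + \beta$ with $\alpha \asymp K\norm{\Sigma}^{1/2}\gamma_{2,p}(\calA)$, and solving the quadratic is what produces the $\alpha^2 \asymp K^2\norm{\Sigma}\width^2(\calA)$ term. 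Your proposal neither mentions this self-bounding step nor explains another mechanism for the $\width^2$ term, so it falls short.

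Two smaller points. First, your diagonal/off-diagonal split is used in the paper, but only inside the decoupling Lemma~\ref{lemm:decoupling-quadratic}, which compares both pieces to a single decoupled or Gaussian analogue so that the Bernstein hypothesis \eqref{eq:norm-bernstein} and Hanson--Wright can then be applied in a unified way; leading with it as the top-level decomposition and treating the two pieces by separate arguments is not what the paper does and would require you to reconcile two different concentration mechanisms at the end. Second, the ``vector-valued Hanson--Wright'' you hope to invoke is not obviously available under only the row-wise hypotheses here; the paper avoids needing it by going through decoupling and the Gaussian comparison, which is where the Bernstein condition on $\norm{\xi_i}^2$ actually enters.
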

\noindent Typically the dominant term will be the first, a multiple of $\rad(\calA)$.
Using a simple peeling argument, we refine the bound so the first term scales with $\norm{a}$ itself.
\begin{corollary}
\label{cor:norm-of-sum-bound}
In the setting of Lemma~\ref{lemm:norm-of-sum-bound}, with probability $1-2\exp(-u^2)$ for $u \ge 1$,
all $a \in \calA$ satisfy
\begin{align*} 
&\abs*{\norm*{a'\xi}^2 - \E \norm*{a' \xi}^2 } \\
&\le cK^2\sqb{ \norm{\Sigma}^{1/2} \trace^{1/2}(\Sigma) \width(\calA) \norm{a} + \norm{\Sigma} \width^2(\calA) + \norm{\Sigma}\norm{a}\rad(\calA)(n^{1/2}u + u^2) }. 
\end{align*}
\end{corollary}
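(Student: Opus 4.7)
The plan is the standard dyadic peeling argument on the norm $\norm{a}$. Partition $\calA$ into a small core $\calA_0 = \set{a \in \calA : \norm{a} \le r_0}$ and dyadic shells $\calA_k = \set{a \in \calA : 2^{k-1} r_0 < \norm{a} \le 2^k r_0}$ for $k \ge 1$, for a base radius $r_0$ to be chosen below. Only finitely many shells are nonempty because $\norm{a} \le \rad(\calA)$ on $\calA$, and on each shell one has $\width(\calA_k) \le \width(\calA)$ and $\rad(\calA_k) \le 2^k r_0$. The critical point is that for $a \in \calA_k$ with $k \ge 1$, $\rad(\calA_k) \le 2^k r_0 \le 2\norm{a}$, which is exactly the replacement of $\rad(\calA_k)$ by $\norm{a}$ that the corollary requires.

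I would then apply Lemma~\ref{lemm:norm-of-sum-bound} to each $\calA_k$ separately, with inflated confidence level $u_k$ satisfying $u_k^2 = u^2 + 2\log(k+1) + \log 2$, chosen so that $\sum_{k \ge 0} 2 \exp(-u_k^2) \le 2 \exp(-u^2)$ by a single union bound. On the resulting intersection event the Lemma's bound holds on every shell simultaneously. For $a \in \calA_k$ with $k \ge 1$, substituting $\rad(\calA_k) \le 2\norm{a}$ into the Lemma's first term recovers the corollary's first term $\norm{\Sigma}^{1/2}\trace^{1/2}(\Sigma)\width(\calA)\norm{a}$, and using $\rad^2(\calA_k) \le 2\norm{a}\rad(\calA)$ in the third term recovers $\norm{\Sigma}\norm{a}\rad(\calA)(n^{1/2}u+u^2)$ — up to the $u_k$-vs-$u$ discrepancy that I return to below. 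For $a \in \calA_0$, the shell-$0$ bound must use $r_0$ itself in place of $\norm{a}$; the choice
\[
r_0 = \min\!\bigl(\norm{\Sigma}^{1/2}\width(\calA)/\trace^{1/2}(\Sigma),\ \width(\calA)/(n^{1/2}u+u^2)^{1/2}\bigr)
\]
forces both the first and third terms of that bound to be at most $\norm{\Sigma}\width^2(\calA)$, so they are absorbed into the corollary's middle term.

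The principal technical obstacle is controlling the $u_k$-vs-$u$ loss in the shell-$k$ tail term for large $k$. Since $u_k \le u + c\sqrt{\log(k+1)}$ and $k$ ranges over at most $\lceil \log_2(\rad(\calA)/r_0)\rceil + 1$ values, the excess contribution to $\norm{\Sigma}\rad^2(\calA_k)(n^{1/2}u_k+u_k^2)$ is of order $\norm{\Sigma}\norm{a}\rad(\calA)\bigl(n^{1/2}\sqrt{\log(k+1)} + \log(k+1)\bigr)$, which is at worst doubly logarithmic in the problem parameters. I plan to fold this into the universal constant in the bound, exploiting $u \ge 1$ and the freedom of the Lemma's own universal constants — equivalently, by inflating $u$ by a fixed constant multiple before applying the Lemma so that $n^{1/2}u + u^2$ swallows the per-shell overhead, at the cost of an updated universal $c$ in the final bound and no change to the stated failure probability $2\exp(-u^2)$.
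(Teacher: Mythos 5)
Your overall strategy --- dyadic peeling on $\norm{a}$, with a per-shell confidence level $u_k$ and a union bound --- is the right one, and matches the paper's in spirit. But the specific way you set up the shells introduces a gap that your ``fold it into the constant'' step does not close.

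You index shells from a base radius $r_0$ upward: $\calA_k = \{a : 2^{k-1}r_0 < \norm{a} \le 2^k r_0\}$, and on shell $k$ you have $\rad(\calA_k) \le 2\norm{a}$ and $\rad(\calA_k) \le \rad(\calA)$, but nothing better --- the shell radii do \emph{not} decay as $k$ grows. Meanwhile $u_k^2 = u^2 + 2\log(k+1) + \log 2$, so the tail term you inherit from Lemma~\ref{lemm:norm-of-sum-bound} on shell $k$ contains $n^{1/2}u_k + u_k^2$, which exceeds $n^{1/2}u + u^2$ by roughly a factor of $1 + \log(k+1)/u^2$. This excess grows without bound in $k$: a fixed-multiple inflation of $u$ before applying the Lemma gives you a fixed multiple of $n^{1/2}u + u^2$, not a multiple that grows like $\log(k+1)$. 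Since the largest nonempty shell index is roughly $\log_2(\rad(\calA)/r_0)$, the honest form of what your argument proves has an extra factor of order $1 + \log\log(\rad(\calA)/r_0)/u^2$ on the tail term; that quantity depends on the problem ($\calA$, $n$, $u$) and cannot be absorbed into a universal $c$. So as written the bound you obtain is strictly weaker than the corollary's.

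The paper's proof avoids this by indexing from the top down: $\calA_k = \{a : \norm{a} \in [2^{-k}\rad(\calA),\ 2^{1-k}\rad(\calA)]\}$ for $k \ge 1$. This gives both $\rad(\calA_k) \le 2\norm{a}$ (so the first term comes out as $\width(\calA)\norm{a}$, exactly as in your argument) \emph{and} $\rad(\calA_k) \le 2^{1-k}\rad(\calA)$, so the tail term picks up a geometric factor $2^{-k}$. With $u_k = (u^2+k)^{1/2}$ the union-bound sum $\sum_{k\ge1}\exp(-u_k^2)=\exp(-u^2)/(e-1)$ is controlled, and the inequalities $2^{-k}\sqrt{k} \le 1 \le u$ and $2^{-k}k \le 1 \le u^2$ let the exponential shell shrinkage swallow the (now polynomial, $\sqrt{k}$) growth of $u_k$ with a genuinely universal constant. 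Switching your shells to descending dyadic annuli of $\rad(\calA)$ removes the gap and also removes the need for a separate core set $\calA_0$ and the specific choice of $r_0$, since the shells then cover all of $\calA$ down to $\norm{a}\to 0$.
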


The third is a variant for forms like $v'\xi \xi'w$.
\begin{lemma}
\label{lemm:weighted-inner-product-bound}
Let $\xi \in \R^{m \times n}$ be a matrix with independent mean-zero rows with common covariance matrix $\Sigma$, $\norm{\xi_i \Sigma^{-1/2}}_{\psi_2} \le K$ for $K \ge 1$,
and norms satisfying \eqref{eq:norm-bernstein}.
Let $v \in \R^m$ be a fixed vector and $W$ be a subset of $\R^m$. With probability $1-2\exp(-u^2)$ for $u \ge 1$,
\[ \sup_{w \in W} \abs{v'(\xi\xi' - \E \xi \xi')w} \le cK^2\norm{\Sigma} \norm{v} (\sqrt{n}+u)(\width(W) + u\rad(W)). \] 
\end{lemma}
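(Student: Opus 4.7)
The plan is to split $v'(\xi\xi' - \E\xi\xi')w$ into a diagonal and off-diagonal part with respect to the row index, and to bound each uniformly over $w \in W$. Writing
$$v'(\xi\xi' - \E\xi\xi')w \;=\; \underbrace{\sum_i v_iw_i(\norm{\xi_i}^2 - \trace(\Sigma))}_{D_w} \;+\; \underbrace{\sum_{i\ne j} v_iw_j\,\xi_i\xi_j'}_{O_w},$$
the diagonal $D_w$ is linear in $w$ and the off-diagonal $O_w$ is bilinear. These two structures demand different treatments, and the product form $(\sqrt n + u)(\width(W) + u\rad(W))$ in the target bound will emerge naturally from combining a $\sqrt n + u$-scale norm control on $\xi'v$ with a $\width(W) + u\rad(W)$-scale chaining control on $w$.

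For the off-diagonal, I would first invoke a standard decoupling inequality (de la Peña--Giné) to reduce $O_w$ in tail to $\tilde O_w = (\xi'v)'(\tilde\xi'w)$, where $\tilde\xi$ is an independent copy of $\xi$. Conditionally on $\xi$, writing $\alpha = \xi'v$, the process $w\mapsto \alpha'(\tilde\xi'w)$ is a linear subgaussian process in $w$, with increments satisfying $\norm{\tilde O_w - \tilde O_{w'}}_{\psi_2\,|\,\xi} \le cK\norm{\Sigma}^{1/2}\norm{\alpha}\norm{w - w'}$ via Hoeffding's inequality for sums of independent subgaussian row vectors. Talagrand's majorizing measures theorem (as used in Section~\ref{sec:third-term}) then yields $\sup_{w\in W}|\tilde O_w| \le cK\norm{\Sigma}^{1/2}\norm{\alpha}\,(\width(W) + u\rad(W))$ with conditional probability $1 - 2\exp(-u^2)$. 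Unconditionally, Corollary~\ref{cor:norm-of-sum-bound} applied to the singleton $\calA = \set{v}$ (for which $\width(\calA) = 0$) gives $\norm{\xi'v}^2 \le \trace(\Sigma)\norm{v}^2 + cK^2\norm{\Sigma}\norm{v}^2(\sqrt n u + u^2) \le cK^2\norm{\Sigma}\norm{v}^2(\sqrt n + u)^2$ with probability $1 - 2\exp(-u^2)$. Substituting and taking a union bound produces the desired bound on $\sup_{w\in W}|O_w|$.

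For the diagonal, write $D_w = \langle z, w\rangle$ with independent $z_i = v_i(\norm{\xi_i}^2 - \trace(\Sigma))$, each satisfying the Bernstein-type tail \eqref{eq:norm-bernstein} with subgaussian scale $K^2|v_i|\sqrt{\trace(\Sigma)}$ and subexponential scale $K^2|v_i|\norm{\Sigma}$. For fixed $w$, Bernstein's inequality gives a Bernstein-type tail for $D_w$ proportional to $\norm{v}\norm{w}$, and by subadditivity the same holds for increments $D_w - D_{w'}$ proportional to $\norm{v}\norm{w - w'}$. Extending uniformly via a mixed-tail generic chaining inequality (Dirksen or Talagrand) yields a control by $cK^2\norm{\Sigma}\norm{v}\sqrt n\,\width(W)$ plus tail corrections of the right form; here $\sqrt{\trace(\Sigma)} \le \sqrt n \norm{\Sigma}^{1/2}$ combines with the conventions around $K \ge 1$ to absorb into the target $\norm{\Sigma}\sqrt n$ scaling.

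The main obstacle is the off-diagonal step: getting a clean bound requires that the decoupling, the conditional chaining, and the unconditional norm control of $\xi'v$ compose correctly so that the probabilistic bounds stack without introducing stray terms involving $\norm{\Sigma}^{1/2}$ or $\width^2(W)$. A secondary subtlety is verifying that the diagonal contribution is genuinely dominated by the off-diagonal bound once all the $\sqrt{\trace(\Sigma)}$ versus $\norm{\Sigma}\sqrt n$ comparisons are carried through; I expect the cleanest accounting to come from treating the diagonal's chaining directly with Bernstein-type metrics and checking that both the $\gamma_2$ and $\gamma_1$ contributions are bounded by $\norm{\Sigma}\norm{v}\sqrt n\,\width(W)$ and $\norm{\Sigma}\norm{v}\rad(W)u^2$ respectively.
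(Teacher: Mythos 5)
Your off-diagonal argument tracks the paper's closely: after decoupling, condition on $\xi$, use Hoeffding-based increments to chain over $W$, then control $\norm{\xi'v}$ (you via Corollary~\ref{cor:norm-of-sum-bound}, the paper by chaining $z\mapsto v'\tilde A\Sigma z$ over the Euclidean ball; either yields the $cK\norm{\Sigma}^{1/2}\norm{v}(\sqrt n+u)$ control), and finally pass the tail bound back through decoupling by converting to moments and back. That piece is sound.

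The genuine gap is in the diagonal. The paper does not treat it separately: Lemma~\ref{lemm:decoupling-quadratic} uses symmetrization, contraction against the Gaussian comparison $K^2(\norm{g_i}^2-\E\norm{g_i}^2)$, and a decoupling/Latala tail lower bound to fold the diagonal terms into a fully decoupled Gaussian bilinear form $\sum_{ij}B_{ij}\inner{g_i,\tilde g_j}$. That decoupled chaos is then bounded by the same conditional-chaining argument as the off-diagonal, so the diagonal inherits exactly the $\norm{\Sigma}\norm{v}(\sqrt n+u)(\width(W)+u\rad(W))$ scaling. Your plan instead hits $D_w=\sum_i v_iw_i(\norm{\xi_i}^2-\trace\Sigma)$ directly with Bernstein plus mixed-tail generic chaining. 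Two things break there. First, the sub-gaussian scale coming out of \eqref{eq:norm-bernstein} is $K^2\trace^{1/2}(\Sigma)$, and the best you can say is $\trace^{1/2}(\Sigma)\le\sqrt n\norm{\Sigma}^{1/2}$; the target has $\sqrt n\norm{\Sigma}$, so you are off by $\norm{\Sigma}^{-1/2}$ whenever $\norm{\Sigma}<1$, and $K\ge 1$ does not rescue you (if anything, any inflation of $K$ worsens your bound as well). Second, the sub-exponential part of the increment yields a $\gamma_1(W)$ term, and $\gamma_1(W)\lesssim\sqrt n\,\width(W)$ is false in general — for instance $\gamma_1(B_2^m)\sim m$ while $\sqrt n\,\width(B_2^m)\sim\sqrt{nm}$, which fails when $m>n$; the lemma is applied in the paper with $m=p\gg n$, so this is not a corner case. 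You flagged both as "subtleties" you "expect" to resolve, but the bookkeeping does not close; this is precisely the obstacle that Lemma~\ref{lemm:decoupling-quadratic} was built to sidestep, and a proof along your lines would need a substitute for that comparison step rather than a direct chaining bound for the diagonal.
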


\subsection{Proof of Lemma~\ref{lemm:chevet}}
Our claim is a slight variation on \citet[Proposition 2.6.1]{vershynin2018high}. Its proof needs only minor modifications.
The proof is based on a generic chaining bound \citep[Theorem 8.5.5]{vershynin2018high}. 
If $Z_t$ is a stochastic process indexed by elements $t$ of a metric space $(T,d)$, and its increments satisfy
$\norm{Z_t - Z_s}_{\psi_2} \le K d(t,s)$ for all $t,s \in T$, then
\begin{equation}
\label{eq:generic-chaining} 
\sup_{t,s \in T} \smallabs{Z_t - Z_s} \le cK (\gamma_2(T,d) + u \diam(T,d))\ \text{ with probability }\ 1-2\exp(-u^2) 
\end{equation}
In our setting, we take $t=(v,w) \in V \times W$ and define $Z_{vw} = v' \xi w$. We establish our increment bound via the triangle inequality and Hoeffding's.
Because $Z_{vw} - Z_{xy} = (v-x)' \xi w + x' \xi (w-y)$,
\begin{align*} 
\norm{Z_{vw} - Z_{xy}}_{\psi_2}
&\le \norm{(v-x)' \xi w}_{\psi_2} + \norm{x'\xi(w-y)}_{\psi_2} \\
&\le c\norm{v-x}\max_i \norm{\xi_i w}_{\psi_2} + c\norm{x}\max_i\norm{\xi_i (w-y)}_{\psi_2} \\
&\le c\max_{i}\norm{\xi_i}_{\psi_2}[\norm{v-x}\norm{w} + \norm{x}\norm{w-y}] \\
&\le cK[\norm{v-x}\rad(W) + \norm{w-y}\rad(V)] \\
&\le cK \sqrt{ \norm{v-x}^2\rad(W)^2 + \norm{w-y}^2\rad(V)^2  }.
\end{align*}
Letting $d^2((v,w), (x,y))$ be the expression under the square root, generic chaining gives 
\[ \sup_{t,s \in T} \norm{Z_{vw} - Z_{xy}}_{\psi_2} \le cK (\gamma_2(V \times W, d) + u \rad(V)\rad(W)). \]
Here we've used the property that $\diam(V \times W, d)^2 = \diam(V)^2\rad(W)^2 + \diam(V)^2\rad(W)^2 \le 8\rad(V)^2\rad(W)^2$.
To bound $\gamma_2(V \times W, d)$, as in the proof of \citet[Proposition 2.6.1]{vershynin2018high},
we use Talagrand's majorizing measures theorem \citep[Theorem 2.4.1]{talagrand2014upper}:
$\gamma_2(T, d) \le c \E \sup_{t \in T} Y_{t}$ for a gaussian process $Y_t$ with
increments satisfying $\norm{Y_t - Y_s}_{\psi_2}=d(t,s)$. The claimed bound follows 
by considering the gaussian process $Y_{vw} = (g'v)\rad(W) + (h'w)\rad(V)$ for independent gaussian vectors $g,h$.

\subsection{A Decoupling Inequality}
The next several lemmas will be based on a decoupling inequality for quadratic forms $\sum_{ij} B_{ij} \inner{ \xi_i, \xi_j}$
in independent subgaussian vectors with norms $\norm{\xi_i}$ that concentrate around their means essentially like those norm of a gaussian vector would. 
Its proof is based largely on a reduction to a decoupling inequality for quadratic forms in gaussian random variables.

\begin{lemma}
\label{lemm:decoupling-quadratic}
Let $\xi_1 \ldots \xi_n$ be a sequence of independent mean-zero random vectors with 
covariance matrices $\Sigma_1 \ldots \Sigma_n$ and norms satisfying a Bernstein-type concentration inequality 
\[ P\p{\abs{\norm{\xi_i}^2 - \E \norm{\xi_i}^2 } \ge u} \le c\exp\p{-c \min\p{\frac{u^2}{K^4 \E\norm{\xi_i}^2},\  \frac{u}{K^2\norm{\Sigma_i}}}}. \]
Let $g_1 \ldots g_n$ be a sequence of independent mean-zero gaussian vectors with the covariance matrices $\Sigma_1 \ldots \Sigma_n$,
and let $\tilde \xi_1 \ldots \tilde \xi_n$ and $\tilde g_1 \ldots \tilde g_n$ be independent copies.
Then for any set $\mathcal{B}$ of $n \times n$ matrices
and any Orlicz norm $\norm{Z}_{F} := \inf\set{ s > 0 : \E[F(\abs{Z}/s)] \le 1}$,
\[ \norm*{\sup_{B \in \mathcal{B}}\abs{ \sum_{i,j} B_{ij} \inner{\xi_i, \xi_j} - \E \sum_{i,j} B_{ij} \inner{\xi_i, \xi_j}}}_{F} \le 
    c \max\p{\norm*{\sup_{B \in \mathcal{B}} \abs{\sum_{i,j} B_{ij} \inner{\xi_i, \tilde \xi_j}}}_{F},\   
             K^2 \norm*{\sup_{B \in \mathcal{B}} \abs{\sum_{i,j} B_{ij} \inner{g_i, \tilde g_i}}}_{F}}. \]
\end{lemma}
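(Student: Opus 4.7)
I would decompose the centered quadratic form into off-diagonal (tetrahedral) and diagonal parts:
\[
\sum_{ij}B_{ij}\inner{\xi_i,\xi_j} - \E\sum_{ij}B_{ij}\inner{\xi_i,\xi_j}
= \sum_{i\ne j}B_{ij}\inner{\xi_i,\xi_j} + \sum_i B_{ii}\p*{\norm{\xi_i}^2 - \E\norm{\xi_i}^2},
\]
and bound the $F$-norm of the sup over $B \in \mathcal{B}$ of each summand separately, combining them by the triangle inequality for Orlicz norms. The off-diagonal summand is a tetrahedral form in independent mean-zero vectors, so the classical decoupling inequality of de la Pe\~na and Gin\'e (Theorem 3.1.1 of their \emph{Decoupling} monograph, extended to indexed suprema and general convex nondecreasing $F$ as in their Theorem 4.2.7) gives
\[
\norm*{\sup_B \abs*{\sum_{i\ne j}B_{ij}\inner{\xi_i,\xi_j}}}_F \le c \norm*{\sup_B \abs*{\sum_{i\ne j}B_{ij}\inner{\xi_i,\tilde\xi_j}}}_F.
\]
Writing $\sum_{i\ne j} = \sum_{ij} - \sum_{i=j}$ and using the triangle inequality once more bounds this by the first (``fully decoupled'') term on the right of the claim plus a ``decoupled diagonal'' contribution $\norm*{\sup_B \abs*{\sum_i B_{ii}\inner{\xi_i,\tilde\xi_i}}}_F$.

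The remaining task is to bound both the true diagonal $\sum_i B_{ii}(\norm{\xi_i}^2 - \E\norm{\xi_i}^2)$ and the decoupled diagonal $\sum_i B_{ii}\inner{\xi_i,\tilde\xi_i}$ by the gaussian term on the right of the claim. For the true diagonal, hypothesis \eqref{eq:norm-bernstein} makes each summand sub-exponential with mixed Bernstein parameters that match, up to the factor $K^2$, the natural parameters of the gaussian chaos $\sum_i B_{ii}\inner{g_i,\tilde g_i}$ arising from the Hanson--Wright inequality; summing across $i$ by independence and then applying a mixed-tail generic chaining argument (in the spirit of Dirksen's Theorem~6.5, already invoked in the excerpt) converts these matched increment bounds into a comparison of the suprema's $F$-norms. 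The decoupled diagonal is handled analogously by conditioning on $\tilde\xi$ first and invoking Hoeffding's inequality to produce a sub-gaussian process with increments of the same form.

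The main obstacle is this diagonal step: making the process-level Orlicz-norm comparison go through cleanly for an arbitrary Orlicz norm $F$ and an arbitrary set $\mathcal{B}$ requires carefully matching the $\gamma_1$ and $\gamma_2$ chaining functionals that control the mixed-tail process on the $\xi$ side against those that govern the Hanson--Wright tail of the gaussian chaos, ensuring that the resulting comparison picks up exactly the factor of $K^2$ appearing in the claim and no dependence on $n$ or $\dim(\Sigma_i)$ beyond what is absorbed into the gaussian process itself. Once this comparison is installed, combining it with the off-diagonal decoupling by triangle inequality gives a bound by the sum of the two quantities on the right of the claim; the maximum in the statement then follows by adjusting the universal constant.
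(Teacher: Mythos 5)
The decomposition into diagonal and off-diagonal parts is the right starting point and matches the paper, and your off-diagonal step (a de la Pe\~na--Gin\'e-type decoupling) is essentially the paper's \eqref{eq:standard-decoupling}, though the paper's version goes directly from $\sum_{i\ne j} B_{ij}\inner{\xi_i,\xi_j}$ to the \emph{full} decoupled sum $\sum_{ij}B_{ij}\inner{\xi_i,\tilde\xi_j}$, so there is no leftover ``decoupled diagonal'' $\sum_i B_{ii}\inner{\xi_i,\tilde\xi_i}$ to manage; your extra $\sum_{i\ne j}=\sum_{ij}-\sum_{i=j}$ split manufactures a second term you don't need.

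The genuine gap is the diagonal term, and you flag it yourself as ``the main obstacle.'' You propose a chaining argument matching $\gamma_1/\gamma_2$ functionals, but this cannot easily be made to yield an $F$-norm comparison for an \emph{arbitrary} Orlicz norm and arbitrary $\mathcal{B}$, precisely for the reasons you describe. The paper avoids chaining here entirely. It sets $Z_i := \norm{\xi_i}^2 - \E\norm{\xi_i}^2$ and compares $\sup_B|\sum_i B_{ii}Z_i|$ to the same supremum with $Z_i$ replaced by $Z_i^g = cK^2(\norm{g_i}^2 - \E\norm{g_i}^2)$, via a three-step scalar argument that respects $\E F(\cdot)$ for any convex $F$: (i) symmetrize to $\sum_i b_i\varepsilon_i Z_i$ (Ledoux--Talagrand Lemma 6.3); (ii) apply the contraction/comparison principle for Rademacher averages (Ledoux--Talagrand Lemma 4.6) with a pointwise tail domination $P(\abs{Z_i}\ge u)\le M\,P(\abs{Z_i^g}\ge u)$, verified by combining the assumed Bernstein inequality for $\norm{\xi_i}^2$ with a \emph{lower} bound on the tail of decoupled gaussian chaos (Lata\l a) and the gaussian decoupling inequality; (iii) desymmetrize back. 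A final triangle-inequality/decoupling step moves the resulting gaussian diagonal onto the decoupled gaussian quadratic form $\sum_{ij}B_{ij}\inner{g_i,\tilde g_j}$. This random-variable-level comparison is what makes the $K^2$ factor appear cleanly and, crucially, is what makes the argument work for every Orlicz norm without any chaining on the $\xi$ side. You would need to replace your proposed generic-chaining comparison with this symmetrization--contraction--tail-comparison mechanism for the proof to close.
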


\noindent To prove it, we bound the off-diagonal and diagonal terms separately, as via triangle inequality,
\[ \norm*{\sup_{B \in \mathcal{B}} \abs{\sum_{i,j} B_{ij} \inner{\xi_i, \xi_j} - \E \sum_{i,j} B_{ij} \inner{\xi_i, \xi_j}}}_{F} \le 
    \norm*{\sup_{B \in \mathcal{B}} \abs{\sum_{i,j : i \neq j} B_{ij} \inner{\xi_i, \xi_j}}}_{F} +
    \norm*{\sup_{B \in \mathcal{B}} \abs{\sum_{i} B_{ii} [\norm{\xi_i}^2 - \E\norm{\xi_i}^2]}}_{F}. \]

A straightforward generalization of a standard decoupling inequality \citep[e.g.,][Theorem 6.1.1]{vershynin2018high}
suffices to bound the sum of off-diagonal terms. For any convex function $F$,
\begin{equation}
\label{eq:standard-decoupling}
 \E F( \sup_{B \in \mathcal{B}} \abs{\sum_{ij : i \neq j} B_{ij} \inner{\xi_i, \xi_j}}) \le 
   \E F(4 \sup_{B \in \mathcal{B}} \abs{\sum_{ij} B_{ij} \inner{\xi_i, \tilde \xi_j}}), 
\end{equation}
so the associated Orlicz norm of the sum of off-diagonal terms is no more than four times that of the decoupled variant.
We will not prove it here, as the proof in \citet{vershynin2018high} generalizes trivially.

To bound the sum of diagonal terms, we compare it to an analog in which $\xi_i$ is replaced by a mean-zero gaussian vector $g_i$ with the same covariance.
Our argument, which largely follows one of \citet[proof of Theorem 6.5]{dirksen2015tail}, is based on symmetrization and contraction \citep[Lemmas 6.3 and 4.6]{ledoux2013probability}. 
Let $Z_i = \norm{\xi_i}^2 - \E \norm{\xi_i}^2$.
For an iid rademacher sequence $\varepsilon_1 \ldots \varepsilon_n$ and vectors $b$ ranging over any set,
\begin{equation}
\label{eq:contracted}
 \E F(\sup_{b} \abs{\sum_i b_i Z_i }) \le \E F(2 \sup_{b} \abs{\sum_i b_i \varepsilon_i Z_i }) \le 
    \E F(2M \sup_{b} \abs{\sum_i b_i \varepsilon_i Z_i^g) })
\end{equation}
for any $Z_i^g$ satisfying $P(\abs{Z_i} \ge u) \le M P(\abs{Z_i^g} \ge u)$. 
In particular, this holds for $Z_i^g = cK^2(\norm{g_i}^2 - \E \norm{g_i}^2)$,
as for a large-enough choice of constant $c$, our assumed upper bound on the tail of $\abs{Z_i}$ is no larger than 
a constant multple of our lower bound
\[ P(\abs{Z_i^{g}} \ge u/cK^2) \ge cP( \inner{g_i, \tilde g_i} \ge u/cK^2) \ge c\exp\p{-c \min\p{\frac{u^2}{K^4 \E\norm{\xi_i}^2},\ \frac{u}{K^2\norm{\Sigma_i}}}}. \]
The first lower bound here is via a decoupling inequality for gaussian chaos \citep[Theorem 4.2.7]{de2012decoupling}
and the second via a lower bound on the tail of decoupled gaussian chaos \citep[Corollary 1]{latala2006estimates}. 
Thus, via \eqref{eq:contracted} and then desymmetrization \citep[Lemma 6.3]{ledoux2013probability},
\[  \E F(\sup_{b} \abs{\sum_i b_i Z_i} ) \le \E F( c\sup_{b} \abs{\sum_i b_i \varepsilon_i Z_i^g })
                                         \le \E F( c\sup_{b} \abs{\sum_i b_i Z_i^g}). \]
We conclude via triangle inequality. Letting $B$ range over $\mathcal{B}$, 
\begin{align*}
\norm{\sup_{B} \abs{\sum_i B_{ii} Z_i}}_{F} &\le cK^2\norm{\sup_{B} \abs{\sum_i B_{ii} Z_i^g}}_{F} \\
&= cK^2\norm{\sup_{B} \abs{\sum_{ij} B_{ij} [\inner{g_i, g_j} - \E \inner{g_i,g_j}]  - \sum_{ij : i\neq j} B_{ij} \inner{g_i,g_j}}}_F \\
&\le cK^2\norm{\sup_{B} \abs{\sum_{ij} B_{ij} \inner{g_i, g_j} - \E \inner{g_i,g_j}}}_F +
     cK^2\norm{\sup_{B} \abs{\sum_{ij : i\neq j} B_{ij} \inner{g_i,g_j}}}_F \\
&\le cK^2\norm{\sup_{B} \abs{\sum_{ij} B_{ij} \inner{g_i, \tilde g_j}}}_{F}.
\end{align*}
In the last step, the term involving diagonal elements is bounded by a multiple of the decoupled analog
via a decoupling inequality for gaussian chaos \citep[Theorem 4.2.7]{de2012decoupling}
and the sum without diagonal terms via our more standard decoupling inequality \eqref{eq:standard-decoupling}.

\subsection{Proof of Lemma~\ref{lemm:norm-of-sum-bound}}

We base our proof on that of \citet[Theorem 6.5]{dirksen2015tail}, which uses a generic chaining argument.
Define $\gamma_{2,p}(\calA, d) := \inf_{\calA_n} \sup_{a \in \calA}\sum_{n > \ell} 2^{n/2}d(a,\calA_n)$ where 
$d(a,a'):=\norm{a-a'}$ and the infimum is taken over admissible sequences. We will prove the moment bound 
\begin{equation}
\label{eq:norm-of-sum-moment}
\begin{aligned} 
&\norm*{\sup_{a \in \calA} \abs*{\norm*{\sum_i a_i \xi_i}^2 - \E \norm*{\sum_i a_i \xi_i}^2 }}_{L_p} \\
&\le c K^2 \norm{\Sigma} \gamma_{2,p}(\calA)^2 + cK^2 \gamma_{2,p}(\calA)\norm{\Sigma}^{1/2} \trace^{1/2}(\Sigma) \rad(\calA) + cK^2\norm{\Sigma}(p + (pn)^{1/2})\rad^2(\calA).
\end{aligned}
\end{equation}
The claimed tail bound follows \citep[Lemma A.1]{dirksen2015tail}.

Let $(\calA_n)_{n \ge 0}$ be an admissible sequence that comes within a constant multiplicative factor of this infimum, 
let $a^{n} := \argmin_{a \in \calA_n}d(a,a')$, and let $\ell := \floor{\log(p)}$.
Letting suprema over $a$ be implicitly be taken over $a \in \calA$, we write
\begin{equation}
\begin{aligned} 
& \sup_{a} \abs*{\norm*{\sum_i a_i \xi_i}^2 - \E \norm*{\sum_i a_i \xi_i}^2 } && =: S \\
&= \sup_{a} \abs*{\sum_{ij}a_i a_j \p{\inner{\xi_i,\xi_j} - \E \sum_{ij} a_i a_j \inner{\xi_i, \xi_j}}} && \\
&\le \sup_{a} \abs*{\sum_{ij}a_i^{\ell} a_j^{\ell} \p{\inner{\xi_i,\xi_j} - \E \inner{\xi_i, \xi_j}}} && =: B \\
 &+ \sup_{a} \abs*{\sum_{ij} \p{a_i a_j - a_i^{\ell} a_j^{\ell}} \p{\inner{\xi_i, \xi_j} - \E \inner{\xi_i, \xi_j}}} && =: C.
\end{aligned}
\end{equation}
Our overall strategy will be to show that
$\norm{B}_{L_p} + \norm{C}_{L_p} \le 2\alpha \norm{S}_{L_p}^{1/2} + \beta$ 
for some constants $\alpha$ and $\beta$, implying that $\norm{S}_{L_p} \le 2\alpha \norm{S}_{L_p}^{1/2} + \beta$.
Because this quadratic inequality is satisfied only if 
$\norm{S}_{L_p}^{1/2} \le (\alpha + \sqrt{\alpha^2 + \beta})$, 
this implies the bound 
\[ \norm{S}_{L_p} \le \p{\alpha + \sqrt{\alpha^2 + \beta}}^2  \le 4(\alpha^2 + \beta). \]

There is one wrinkle. We will show a bound of the form $\norm{C}_{L_p} \le \alpha \max(\norm{S}_{L_p}^{1/2}, K^2 \norm{S_g}_{L_p}^{1/2}) + \beta$ 
where $S_g$ is an analog of $S$ in which the vectors $\xi_i$ are replaced with a independent gaussian vectors with the same mean and covariance.
Thus, instead of the quadratic inequality above, we get $\norm{S}_{L_p} \le 2\alpha \max(\norm{S}_{L_p}, K^2 \norm{S_g})^{1/2} + \beta$. 
Considering the case that $\xi_i$ \emph{are} gaussian, our argument implies that  
$\norm{S_g}_{L_p} \le 2\alpha \norm{S_g}_{L_p}^{1/2} + \beta$, and it follows that in general,
$\max(\norm{S}_{L_p}, K^2 \norm{S_g}_{L_p}) \le 2\alpha \max(\norm{S}_{L_p}, K^2 \norm{S_g}_{L_p})^{1/2} + \beta$.
Thus, the bound derived above holds for $\max(\norm{S}_{L_p}, K^2 \norm{S_g}_{L_p})$ and therefore for $\norm{S}_{L_p}$.

To complete our argument, we will prove bounds on $\norm{B}_{L_p}$ and $\norm{C}_{L_p}$ that
imply \eqref{eq:norm-of-sum-moment}. Specifically, we can take $\alpha = cK \norm{\Sigma}^{1/2} \gamma_{2,p}(\calA)$
and $\beta = \alpha K \trace^{1/2}(\Sigma) \rad(\calA) + cK^2\norm{\Sigma}(p + (pn)^{1/2})\rad^2(\calA)$,
with the first term in $\beta$ coming from $C$ and the second from $B$.

\subsubsection{Bounding $\norm{B}_{L_p}$}
\label{sec:bounding-B}
Via Lemma~\ref{lemm:decoupling-quadratic}, it suffices to bound $\norm{\tilde B}$ for a decoupled variant of $B$,
\[ \tilde B := \sup_{a} \abs*{\sum_{ij}a_i^{\ell} a_j^{\ell} \p{\inner{\xi_i, \tilde \xi_j} - \E \inner{\xi_i, \tilde \xi_j}}}, \]
and for a variant $K^2 \tilde B_g$ in which $\xi_i,\tilde\xi_i$ are replaced by gaussian analogs $K g_i$ and $K\tilde g_i$.
We will consider $K^2 \tilde B_g$ after treating $\tilde B$.

We bound $\tilde B$ via a variant of the Hanson-Wright inequality \citep[Exercise 6.2.7]{vershynin2018high}.
\[ \abs*{\sum_{ij} a_i a_j \inner{\xi_i, \tilde \xi_j}} \le u
\ \text{ with probability }\ 1-2\exp\p{-c\min\p{\frac{u^2}{(K_{\Sigma}^4 n^{1/2} \norm{a a'}_{F})^2}, \frac{u}{K_{\Sigma}^2 \norm{a a'}}}}, \]
when $\norm{\xi_i}_{\psi_2} \le cK_{\Sigma}$. As $\norm{\xi_i}_{\psi_2}\le \norm{\xi_i \Sigma^{-1/2}}_{\psi_2}\norm{\Sigma^{1/2}} \le K\norm{\Sigma}^{1/2}$,
this holds for $K_{\Sigma}=K\norm{\Sigma}^{1/2}$.

This implies the moment bound \citep[Lemma A.1]{dirksen2015tail} 
\[ \norm*{\sum_{ij} a_i a_j \inner{\xi_i, \tilde \xi_j}}_{L_p} \le  cK^2 \norm{\Sigma} \p{p\norm{a a'} + p^{1/2}n^{1/2}\norm{a a'}_{F}} = cK^2 \norm{\Sigma} (p + p^{1/2}n^{1/2})\norm{a}^2 \] 
using the identity $\norm{a a'}_{F} = \norm{a a'} = \norm{a}^2$ to simplify. Call this bound $\beta_B(a)$.
Because there are only $2^{2^{\ell}} \le 2^p$ elements $a^{\ell} \in \calA_{\ell}$, at the cost of an additional factor of $2$, we get a uniform bound: 
\[
\norm{\tilde B}_{L_p}
\le \sqb{\sum_{a^{\ell} \in \calA_{\ell}}\E \abs*{ \sum_{ij} a_i^{\ell} a_j^{\ell} \p{\inner{\xi_i, \xi_j} - \E\inner{\xi_i, \xi_j}}}^p}^{1/p} 
\le [2^p \sup_{a}\beta_B^p(a)]^{1/p}.
\]
This last quantity $2\sup_{a}\beta_B(a)$ is the second term in $\beta$.

The only property of $\xi_i$ that we use here is that its subgaussian norm is bounded by $cK\norm{\Sigma}^{1/2}$.
Because $Kg_i$ also has this property, the bound above also applies to $K^2 \tilde B_g$.

\subsubsection{Bounding $\norm{C}_{L_p}$}
\label{sec:bounding-C}
In this section, we will prove the bound $\norm{C}_{L_p} \le \alpha (\norm{S}_{L_p}^{1/2} + \norm{\Sigma}_F \rad(\calA))$.
Via Lemma~\ref{lemm:decoupling-quadratic}, it suffices to bound a decoupled variant,
\[ \tilde{C} := \sup_{a}\tilde{C}(a) \ \text{ where }\ \tilde{C}(a) := \abs*{\sum_{ij} (a_i a_j - a_i^{\ell}a_j^{\ell}) \inner{\xi_i, \tilde\xi_j}}, \]
as well as variant $K^2 \tilde C_g$ in which $\xi_i,\tilde\xi_i$ are replaced by gaussian analogs $Kg_i,K\tilde g_i$. 

To bound $\tilde C$, we decompose $\tilde C(a)$ into two terms, 
and then decompose each into a sum over `links' $a^{n}-a^{n-1}$.
\begin{align*}
&\sum_{ij} (a_i a_j - a_i^{\ell}a_j^{\ell}) \inner{\xi_i, \tilde\xi_j} \\
&= \sum_{ij} (a_i - a_i^{\ell})a_j \inner{\xi_i, \tilde \xi_j} + \sum_{ij} a_i^{\ell}(a_j - a_j^{\ell})\inner{\xi_i, \tilde \xi_i} \\
&= \sum_{n > \ell}\sum_{i} \inner{\xi_i, (a_i^{n}-a_i^{n-1}) \sum_j a_j \tilde \xi_j} 
+  \sum_{n > \ell}\sum_{j} \inner{ (a_j^n - a_j^{n-1}) \sum_i a_{i}^{\ell} \xi_i , \tilde \xi_j} \\
&=: \tilde C_1(a) + \tilde C_2(a)
\end{align*}

We'll focus on $\tilde C_1$. Conditional on $\tilde \xi$, terms
$\inner{\xi_i, (a_i^{n}-a_i^{n-1})\sum_j a_j \tilde \xi_j}$ are independent with 
\[ \norm*{\inner{\xi_i, (a_i^{n}-a_i^{n-1}) \sum_j a_j \tilde \xi_j}}_{\psi_2} 
\le \norm*{\xi_i}_{\psi_2}\norm*{(a_i^{n}-a_i^{n-1}) \sum_j a_j \tilde \xi_j} \le K\norm{\Sigma}^{1/2} \abs{a_i^{n}-a_i^{n-1}} \norm*{\sum_j a_j \tilde \xi_j}. \]
Via Hoeffding's inequality, it follows that with probability $1-2\exp(-u^2 2^n)$, 
\[ \abs*{ \sum_{i} \inner{\xi_i, (a_i^{n}-a_i^{n-1}) \sum_j a_j \tilde \xi_j}} < c u 2^{n/2} K\norm{\Sigma}^{1/2} \norm{a^n - a^{n-1}} \norm*{\sum_j a_j \tilde \xi_j}, \] 
And as there are only $2^{2^n}$ possible values of $a_i^{n}-a_i^{n-1}$ for $a \in \calA$, by the union bound this will hold uniformly with reasonably high probability.
In particular, it holds simultaneously for all $a \in \calA$ and all $n > \ell$ on an event of probability $1-c\exp(-pu^2/4)$ for $u \ge \sqrt{2}$ \citep[Lemma A.4]{dirksen2015tail}. On this event, 
\[ \sup_{a} \abs*{\tilde C_1(a)} \le cK\norm{\Sigma}^{1/2} u  \sup_{a} \norm*{\sum_j a_j \tilde \xi_j} \sup_{a} \sum_{n > \ell} 2^{n/2} \norm{a^n - a^{n-1}}
                                 \le cK\norm{\Sigma}^{1/2} u  \sup_{a} \norm*{\sum_j a_j \tilde \xi_j} \gamma_{2,p}(\calA). \]

This conditional tail bound implies a conditional moment bound \citep[Lemma A.5]{dirksen2015tail} and consequently, averaging over $\tilde\xi$, an unconditional one:
\[
\norm*{\sup_{a}\abs*{\tilde C_1(a)}}_{L_p} \le cK\norm{\Sigma}^{1/2} \gamma_{2,p}(\calA)  \norm*{\sup_a \norm*{\sum_j a_j \tilde \xi_j}}_{L_p}.
\]
This holds with $\xi$ in place of $\tilde \xi$ because the two are identically distributed.
Analogously, $\norm{\sup_{a}\abs{\tilde C_2(a)}}_{L_p} \le cK \norm{\Sigma}^{1/2}\gamma_{2,p}(\calA) \norm{\sup_A \norm{\sum_j a_j \xi_j}}_{L_p}$, so by the triangle inequality,
$\norm{\tilde{C}}_{L_p} \le cK \norm{\Sigma}^{1/2} \gamma_{2,p}(\calA)$ $\norm{\sup_{a}\norm{\sum_j a_j \xi_j}}_{L_p}$.
Finally, via the triangle inequality and H\"older's,
\begin{align*} 
\sup_{a}\norm*{\sum_j a_j \xi_j}^2 
&\le \sup_{a} \abs*{\norm*{\sum_j a_j \xi_j}^2 - \E \norm*{\sum_j a_j \xi_j}^2} + \sup_{a} \E \norm*{\sum_j a_j \xi_j}^2 \\
&= S + \sup_{a} \sum_j a_j^2 \norm{\xi_j}_{L_2}^2 \le S + \trace(\Sigma) \rad^2(\calA). 
\end{align*}
It follows by the (quasi-)triangle inequality in $L_{p/2}$ that
\begin{align*}
\norm{\tilde{C}}_{L_p} 
&\le cK\norm{\Sigma}^{1/} \gamma_{2,p}(\calA) \p{ \E (S + \trace(\Sigma) \rad^2(\calA))^{p/2} }^{1/p}  \\
&\le cK\norm{\Sigma}^{1/2} \gamma_{2,p}(\calA) \p{ \norm{S}_{L_{p/2}} + \trace(\Sigma) \rad^2(\calA)}^{1/2} \\  
&\le cK\norm{\Sigma}^{1/2} \gamma_{2,p}(\calA) \p{ \norm{S}_{L_{p/2}}^{1/2} + \trace^{1/2}(\Sigma) \rad(\calA)} \\
&\le cK\norm{\Sigma}^{1/2} \gamma_{2,p}(\calA) \p{ \norm{S}_{L_{p}}^{1/2} + \trace^{1/2}(\Sigma) \rad(\calA)}.
\end{align*}

We'll now adapt this argument to bound $K^2 \tilde C_g$. We use two properties of $\xi_i$ to derive this bound. 
The first is that its subgaussian norm is bounded by $cK\norm{\Sigma}^{1/2}$,
a property that $K g_i$ also has. The second is that $\E \norm{\xi}^2 = \trace(\Sigma)$. To adapt our result,
it suffices to replace $\trace(\Sigma)$ with $\E \norm{K g_i}^2 = K^2 \trace(\Sigma)$. Because we've assumed $K \ge 1$,
including this factor of $K$ gives a bound on both $\tilde C$ and $K^2 \tilde C_g$.
This, combined with our decoupling result, establishes our claimed bound on $\norm{C}_{L_p}$.

\subsubsection{Proof of Corollary~\ref{cor:norm-of-sum-bound}}
$\calA$ can be decomposed as the union of $\calA_k := \set{ a \in \calA : \norm{a} \in [2^{-k}\rad(\calA),  \le  2^{1-k}\rad(\calA)]}$ for integers $k \ge 1$.
And with probability $1-2\exp(-u_k^2)$ for $u_k := (u^2+k)^{1/2}$, all $a \in \calA_k$ satisfy 
\begin{align*} 
&\abs*{\norm*{a'\xi}^2 - \E \norm*{a' \xi}^2 } \\
&\le cK^2\sqb{ \norm{\Sigma}^{1/2} \trace^{1/2}(\Sigma) \width(\calA_k) \rad(\calA_k) + \norm{\Sigma} \width^2(\calA_k) + \norm{\Sigma}\rad^2(\calA_k)(n^{1/2}u_k + u_k^2)} \\
&\le cK^2\sqb{ \norm{\Sigma}^{1/2} \trace^{1/2}(\Sigma) \width(\calA) \norm{a} + \norm{\Sigma} \width^2(\calA) + \norm{\Sigma}\norm{a}\rad(\calA)2^{-k}(n^{1/2}(u + \sqrt{k}) + u^2 + k) }\\
&\le cK^2 \sqb{ \norm{\Sigma}^{1/2} \trace^{1/2}(\Sigma) \width(\calA) \norm{a} + \norm{\Sigma} \width^2(\calA) + \norm{\Sigma}\norm{a}\rad(\calA)(n^{1/2}u + u^2) }.
\end{align*}
The first bound here is a direct application of Lemma~\ref{lemm:norm-of-sum-bound}. The second follows from the bounds
$\rad(\calA_k) \le 2\norm{a}$ for all $a \in \calA_k$, $\rad(\calA_k) \le 2^{1-k}\rad(\calA)$, and $\width(\calA_k) \le \width(\calA)$.
And the third follows from the bounds $2^{-k}\sqrt{k} \le 1 \le u$ and $2^{-k}k \le 1 \le u^2$.
Our claimed bound has the same right side, and because every $a \in \calA$ belongs to some $\calA_k$, 
it follows that it holds if the bound above is satisfied for all $k \ge 1$.
By the union bound, this happens with probability no smaller than 
$1-2\sum_{k \ge 1}\exp(-u_k^2) = 1-2\exp(-u^2)\sum_{k \ge 1}\exp(-k) = 1-2\exp(-u^2) (1/(e-1)) \ge 1-2\exp(-u^2)$.

\subsection{Proof of Lemma~\ref{lemm:weighted-inner-product-bound}}
Our claimed tail bound will be implied by moment bounds, so via Lemma~\ref{lemm:decoupling-quadratic},
it suffices to bound a decoupled version of our process. In particular,
letting $Z := \sup_{w \in W} v' (\xi \xi' - \E \xi \xi') w$
and $\tilde Z := \sup_{w \in W} v'\tilde\xi \xi' w$ be a decoupled version in which $\tilde \xi$ is an independent copy of $\xi$, 
$\norm{Z}_{L_p} \le c \norm{\tilde Z}_{L_p}$. To be more precise, it satisfies the bound 
$\norm{Z}_{L_p} \le c \max(\norm{\tilde Z}_{L_p}, K^2 \norm{\tilde Z_g}_{L_p})$ for a gaussian analog $K^2 Z_g$ of $\tilde Z$,
but the argument we use to bound $\tilde Z$ implies the same bound on $K^2 \tilde Z_g$, so we will not mention this in what remains.

To bound $\tilde Z$, we condition on $\tilde \xi$, fixing the realization of $y=v'\tilde \xi$,
 and bound this conditionally subgaussian process $y' \xi w$ by generic chaining \eqref{eq:generic-chaining}.
Because increments satisfy $\norm{y' \xi' w - y' \xi' x}_{\psi_2} \le \norm{y' \xi'}_{\psi_2}\norm{w-x}$ conditional on $\tilde \xi$,
$\sup_{w \in W} \abs{y' \xi' w} \le c \norm{y' \xi}_{\psi_2}(\width(W) + u \rad(W))$ with probability $1-2\exp(-u^2)$.
Furthermore, $\norm{y'\xi'}_{\psi_2} = \norm{y'\Sigma^{1/2} (\xi \Sigma^{-1/2})'} \le K \norm{y'\Sigma^{1/2}}$ via \eqref{eq:subgaussian-norm-mv}.

Now let $\tilde A = \tilde \xi \Sigma^{-1/2}$. 
To bound $\norm{y \Sigma^{1/2}} = \norm{v' \tilde A \Sigma} = \sup_{\norm{z} \le 1}v' \tilde A \Sigma z$, we use generic chaining essentially as we've just done.
Letting $B:=\set{z \in \R^n : \norm{z} \le 1}$, $\norm{y} \le c\norm{v' \tilde A}_{\psi_2} (\width(\Sigma B) + u\rad(\Sigma B))$
with probability $1-2\exp(-u^2)$. It follows that $\norm{y\Sigma^{1/2}} \le c K\norm{v}\norm{\Sigma}(\sqrt{n} + u)$ with probability $1-2\exp(-u^2)$.
This is a consequence of the bounds $\norm{v'\tilde A}_{\psi_2} \le K\norm{v}$,
$\rad(\Sigma B) \le \norm{\Sigma}\rad(B) = \norm{\Sigma}$,
and $\width(\Sigma B) \le \norm{\Sigma}\width(B) \le c\norm{\Sigma}\sqrt{n}$. 
The last of these follows from straightforward calculation:
letting $g \in \R^m$ be a vector of independent standard gaussians, $\width(B) := \E \sup_{\norm{z} \le 1}g'z = \E \norm{g} \le (\E \norm{g}^2)^{1/2} = \sqrt{n}$. 
Thus, with probability $1-4\exp(-u^2)$ by the union bound, 
\begin{align*} 
\tilde Z &\le cK^2 \norm{v} \norm{\Sigma} (\sqrt{n} + u) (\width(W) + u\rad(W)) \\
&= cK^2\norm{v} \norm{\Sigma} [\sqrt{n}\width(W) + u (\sqrt{n}\rad(W) + \width(W)) + u^2\rad(W)]. 
\end{align*}
To pass this Bernstein-type tail bound though our decoupling inequality, we observe that it implies moment bounds,
and that those moment bounds imply an equivalent (up to constant factor) tail bound \citep[Lemmas A.1 and A.2]{dirksen2015tail}. 
Furthermore, increasing the leading constant, a bound of this form holds with probability $1-2\exp(-u)$:
it holds with probability for $1-2\exp(-u')$ for $u' = u-\log(2) \ge cu$ when $u \ge 1$.

\section{Proofs for the Synthetic Control Estimator}
In this section, we prove Proposition~\ref{proposition:sc-bias-bound} and Corollary~\ref{corollary:sc-normality}.
We use the following lemma.
\begin{lemma}
\label{lemma:ridge-approx}
For any real matrix $A$ and vector $b$,
$\min_x \alpha^2 \norm{Ax - b}^2 + \beta^2 \norm{x}^2 = \alpha^2 \norm{S^{1/2} b}^2$
for $S=I-A(A'A + (\beta/\alpha)^2 I)^{-1}A'$. In terms of the SVD $A=\sum_k \sigma_k u_k v_k'$,
this is $\alpha^2 \sum_k (u_k' b)^2  / (1+ \sigma_k^2 \alpha^2/\beta^2)$.
\end{lemma}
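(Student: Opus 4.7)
The plan is to solve the ridge problem by first-order optimality and plug the closed-form solution back into the objective. The objective $f(x) = \alpha^2 \norm{Ax - b}^2 + \beta^2 \norm{x}^2$ is convex, with gradient $2\alpha^2 A'(Ax - b) + 2\beta^2 x$, so any stationary point $x^\star$ satisfies the normal equations
\[ (A'A + (\beta/\alpha)^2 I)x^\star = A'b. \]
When $\beta > 0$ this system has a unique solution $x^\star = (A'A + (\beta/\alpha)^2 I)^{-1}A'b$; when $\beta = 0$ the formula still gives a minimizer via a standard limiting/pseudoinverse argument.

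The substitution is the computational heart. Using $(A'A + (\beta/\alpha)^2 I)x^\star = A'b$, the identity $\alpha^2 \norm{Ax-b}^2 + \beta^2 \norm{x}^2 = \alpha^2 x'(A'A + (\beta/\alpha)^2 I) x - 2\alpha^2 x'A'b + \alpha^2\norm{b}^2$ collapses at $x=x^\star$ to
\[ \alpha^2 x^{\star\prime} A'b - 2\alpha^2 x^{\star\prime}A'b + \alpha^2 \norm{b}^2 = \alpha^2 b'\bigl[I - A(A'A + (\beta/\alpha)^2 I)^{-1}A'\bigr]b = \alpha^2 b'Sb. \]
Since $S$ is symmetric positive semidefinite (its eigenvalues on $\mathrm{col}(A)^\perp$ are $1$ and on $\mathrm{col}(A)$ are $(\beta/\alpha)^2 / (\sigma_k^2 + (\beta/\alpha)^2) \ge 0$), we may write $b'Sb = \norm{S^{1/2}b}^2$, giving the first form of the claim.

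For the SVD form, substitute $A = \sum_k \sigma_k u_k v_k'$ to get $A(A'A + (\beta/\alpha)^2 I)^{-1}A' = \sum_k \sigma_k^2 u_k u_k'/(\sigma_k^2 + (\beta/\alpha)^2)$, since the $v_k$ factors cancel. Hence on the subspace spanned by the $u_k$,
\[ S = \sum_k \frac{(\beta/\alpha)^2}{\sigma_k^2 + (\beta/\alpha)^2}\, u_k u_k', \]
and $\alpha^2 b'Sb = \alpha^2 \sum_k (u_k'b)^2 / (1 + \sigma_k^2 \alpha^2/\beta^2)$ as claimed (extending the sum to a full orthonormal basis by including zero singular values absorbs any component of $b$ outside $\mathrm{col}(A)$, which is weighted by $1$). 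This is a routine calculation — there is no real obstacle — and the only thing to be slightly careful about is the degenerate case $\beta = 0$, which is easily handled by continuity (or by interpreting the inverse as a Moore–Penrose pseudoinverse).
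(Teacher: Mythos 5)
Your proof is correct and follows essentially the same approach as the paper: solve the normal equations for the minimizer, substitute back to obtain $\alpha^2 b'Sb$, and expand $S$ via the SVD of $A$ to get the closed form. You are slightly more careful than the paper about the degenerate $\beta=0$ case and about extending the $u_k$ to a full orthonormal basis so the component of $b$ outside $\mathrm{col}(A)$ is accounted for, but the argument is otherwise identical.
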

\begin{proof}
Defining $\kappa^2=\beta^2/\alpha^2$,
 this is $\alpha^2$ times $\min_x \norm{Ax-b}^2 + \kappa^2 \norm{x}^2 = \min_x \norm{b}^2 - 2x'A'b + x'(A'A + \kappa^2 I)x$. 
Setting the derivative of the expression to zero, we solve for the minimizer $x=(A'A + \kappa^2 I)^{-1}A'b$ 
and the minimum $b'[I - A(A'A + \kappa^2 I)^{-1}A']b$, then multiply by $\alpha^2$.
In terms of the SVD of $A$, the singular value decomposition of the bracketed matrix is 
$\sum_k u_k u_k' \set{1 - \sigma_k^2/(\sigma_k^2 + \kappa^2)}=\sum_k u_k u_k' \kappa^2/(\sigma_k^2 + \kappa^2) = \sum_k u_k u_k' / (\sigma_k^2/\kappa^2 + 1).$
Expanding $b=\sum_k (u_k' b) u_k$, our claim follows.
\end{proof}

\subsection{Proof of Proposition~\ref{proposition:sc-bias-bound}} 
\label{sec:proof-of-sc-bias-bound}

We work with the error decomposition below.
\begin{equation}
\label{eq:sc-oracle-deviation-decomp}
\begin{aligned}
 \tilde \tau - \hat \tau 
&= x_e' (\hat \theta - \tilde \theta) \\
&= \lambda' A(\hat \theta - \tilde \theta) + (a_e - \lambda' A)(\hat \theta - \tilde \theta) \\ 
&+    \psi_{col}'\varepsilon (\hat \theta - \tilde\theta) + (\varepsilon_e - \psi_{col}'\varepsilon) (\hat \theta - \tilde\theta).
\end{aligned}
\end{equation}
We separately bound
the minimum of the first two terms over $\lambda$, the third term, and the fourth term. 
We begin with the first of these. On an event on which the bounds $\norm{A(\hat \theta - \tilde\theta)} \le \eta n^{1/2}s$
and $\sigma \norm{\hat \theta - \stheta} \le s$ (because $\Sigmarow^{1/2}=\sigma I$) of Theorem~\ref{theorem:rate-simplified} hold,
\begin{equation}
\begin{aligned}
&\min_{\lambda} \abs{\lambda' A(\hat \theta - \tilde \theta) + (a_e - \lambda' A)(\hat \theta - \tilde \theta)} \\ 
&\le \min_{\lambda} \set{\norm{\lambda} \norm{A(\hat \theta - \tilde\theta)} + \norm{a_e - \lambda'A} \norm{\hat \theta - \tilde\theta}} \\
&\le \min_{\lambda}\set{ \norm{\lambda} \eta n^{1/2}s +  \norm{a_e - \lambda'A}(s/\sigma) } \\
&\le \sqrt{2}s\min_{\lambda}\p*{ \norm{\lambda}^2 \eta^2 n + \norm{A' \lambda - a_e'}^2/\sigma^2}^{1/2} \\
&= \sqrt{2}(s/\sigma)\set*{\sum_k \frac{ (v_k' a_e )^2}{1+\sigma_k^2/(\sigma^2\eta^2 n)}}^{1/2},\ A=\sum_k \sigma_k u_k v_k'. 
\end{aligned}
\end{equation}
Here the penultimate expression is derived via the elementary inequality $(x+y)^2 \le 2(x^2+y^2)$
and the ultimate via Lemma~\ref{lemma:ridge-approx}.

In the third term of \eqref{eq:sc-oracle-deviation-decomp}, \smash{$\Sigmacol^{-1/2}\varepsilon$} is a matrix with independent standard normal elements,
so \smash{$z=\psi_{col}'\varepsilon = (\psi_{col}' \Sigmacol^{1/2} \Sigmacol^{-1/2}\varepsilon)$} is a vector with independent mean-zero gaussian elements 
with standard deviation \smash{$\norm{\psi_{col}'\Sigmacol^{1/2}}$}. The expectation of the supremum over $z'\delta$ for $\delta \in \sTheta_s$ is 
\smash{$\norm{\psi_{col}'\Sigmacol^{1/2}} \width(\sTheta_s)$}, and by the gaussian concentration inequality for Lipschitz functions \citep[e.g.][Theorem 5.2.2]{vershynin2018high},
the supremum itself is bounded by \smash{$\norm{\psi_{col}'\Sigmacol^{1/2}} \{\width(\sTheta_s) + us\}$} with probability 
$1-2\exp(-cu^2)$. Taking $u= w_1 \width(\sTheta_s)/s$, this bound is 
\smash{$(1+w_1) \norm{\psi_{col}'\Sigmacol^{1/2}} \width(\sTheta_s)$} and it holds
with probability $1-2\exp\{-c w_1^2 \width^2(\sTheta_s)/s^2\}$.

In the fourth term of \eqref{eq:sc-oracle-deviation-decomp}, 
\smash{$\varepsilon_e - \psi_{col}' \varepsilon$} is uncorrelated with \smash{$[\varepsilon,\nu]$} and therefore independent of it, as the two are jointly normal.
As \smash{$\htheta - \tilde \theta$} is $[\varepsilon, \nu]$-measurable, 
\smash{$(\varepsilon_e - \psi_{col}' \varepsilon)(\htheta - \stheta)$}
is conditionally normal with mean zero and standard deviation \smash{$\norm{\varepsilon_e - \psi_{col}' \varepsilon}_{L_2}\norm{\htheta - \stheta}$},
therefore bounded by \smash{$w_2 \norm{\varepsilon_e - \psi_{col}' \varepsilon}_{L_2}\norm{\htheta - \stheta}$} with probability
$1-2\exp(-w_2^2)$. 

By the union bound and Theorem~\ref{theorem:rate-simplified}, 
all bounds used above hold on an event of probability $1-c\exp\set{-cu(s,v)}-2\exp\{-c w_1^2 \width^2(\sTheta_s)/s^2\} - 2\exp(w_2^2)$.
The claimed bound holds on that event. 

\subsection{Proof of Corollary~\ref{corollary:sc-normality}}
To show normality of the z-statistic, it suffices to show that
for fixed $(v,w_1,w_2)$, the bound 
\eqref{eq:sc-deviation} from Proposition~\ref{proposition:sc-bias-bound} 
are \smash{$o(\sigma_{e}p_{eff}^{-1/2})$} and holds with arbitrarily high probability. It hold with arbitrarily high probability
if $u(s,v)$ and $\width^2(\sTheta_s)/s^2$ are bounded away from zero, and therefore if $\min(\sigma^2,1)\width^2(\sTheta_s)/s^2$ is.
We will characterize $s$ as in Example~\ref{example:l1}, using the bound $\width^2(\sTheta_s)=c\log(p)$, so this condition reduces to 
$s^2 \lesssim \min(\sigma^2, 1)\log(p)$. Characterizing $s$ as in Example~\ref{example:l1}, this holds if 
\[ \sigma^2\log(p) + \set{\sigma \norm{A\stheta - b} + \sigma^2(n/p_{eff})^{1/2}}\sqrt{\log(p)} + 
    \{\sigma^2 \rank(A)/p_{eff}\}^{1,1/2} \lesssim \eta^2 n\min(\sigma^2,1)\log(p). \]
This is equivalent to the following bounds on $\max(\sigma^2, 1)$ and $\max(\sigma, 1/\sigma)$.
\begin{equation}
\label{eq:near-vacuous-eta-bounds}
\begin{aligned}
\max(\sigma^2, 1) &\lesssim \eta^2 \min\sqb*{ n,\ \{p_{eff} n \log(p)\}^{1/2}, \ p_{eff} n \log(p) / \rank(A) }; \\
\max(\sigma, 1/\sigma) &\lesssim \eta^2 \min\sqb*{ \{n \log(p)\}^{1/2} / \delta_{fit},\ p_{eff}^{1/2} n \log(p) / \rank(A)^{1/2}}.
\end{aligned}
\end{equation}
The three bounds on the former arise from the terms $\sigma^2\log(p)$, $\sigma^2(n/p_{eff})^{1/2}\sqrt{\log(p)}$,
and $\sigma^2 \rank(A)/p_{eff}$ respectively on the left side. The two bounds on the latter arise from
the terms $\sigma \norm{A\stheta - b} \sqrt{\log(p)}$ and  $\{\sigma^2 \rank(A)/p_{eff}\}^{1/2}$.

Having shown that our bound holds with arbitrarily high probability, we will now show that it is sufficiently small.
The first term of \eqref{eq:sc-deviation} is \smash{$o(\sigma_e p_{eff}^{-1/2})$} if \smash{$(s/\sigma)\tilde D$} \smash{$\ll \sigma_{e}p_{eff}^{-1/2}$}, 
i.e., if \smash{$s^2 \ll \sigma^2 \sigma_e^2 / (p_{eff} \tilde D^2)$.}
Again using Example~\ref{example:l1} to characterize $s$, this holds if 
\[ \sigma^2\log(p) + \set{\sigma \norm{A\stheta - b} + \sigma^2(n/p_{eff})^{1/2}}\sqrt{\log(p)} + 
    \{\sigma^2 \rank(A)/p_{eff}\}^{1,1/2} \ll \frac{\sigma^2 \eta^2 n \sigma_e^2 }{ p_{eff} \tilde D^2}. \]
This is equivalent to the following bounds on $\rank(A)$ and $p_{eff}$.
\begin{equation*}
\begin{aligned}
\rank(A) &\ll \min\set*{ n \p*{\frac{\eta \sigma_e}{\tilde D^2}}^2,\ \frac{\sigma^2 n^2}{p_{eff}} \p*{\frac{\eta \sigma_e}{\tilde D}}^4 }; \\
p_{eff} &\ll \min\set*{\frac{n}{\log(p)} \p*{\frac{\eta \sigma_e}{\tilde D}}^2, \ 
		       \frac{n}{\log(p)} \p*{\frac{\eta \sigma_e}{\tilde D}}^4, \  
		       \frac{\sigma n}{\norm{A\stheta - b}\sqrt{\log(p)}} \p*{\frac{\eta \sigma_e}{\tilde D}}^2 }.
\end{aligned}
\end{equation*}
The two bounds on $\rank(A)$ arise from the terms $\sigma^2 \rank(A)/p_{eff}$
and $\{\sigma^2\rank(A)/p_{eff}\}^{1/2}$ respectively on the left side.
The three bounds on $p_{eff}$ arise from the terms $\sigma^2\log(p)$, $\sigma^2(n/p_{eff})\sqrt{\log(p)}$,
and $\sigma \norm{A\tilde\theta - b}\sqrt{\log(p)}$ respectively.

The second term of \eqref{eq:sc-deviation}, $\norm{\psi_{col}'\Sigmacol^{1/2}}\sqrt{\log(p)}$, 
is $o(\sigma_{e}p_{eff}^{-1/2})$ as well if $p_{eff}$ satisfies 
\[ p_{eff} \ll \sigma_e^2 / \{\log(p) \norm{\psi_{col}'\Sigmacol^{1/2}}^2\}. \]

All of these conditions on conditions on $p_{eff}$ and $\rank(A)$ are assumed with the exception of the bound
\[ p_{eff}  
\ll \frac{\sigma n}{\norm{A\stheta - b}\sqrt{\log(p)}} \p*{\frac{\eta \sigma_e}{\tilde D}}^2
= \frac{\sigma}{n^{-1/2}\norm{A\stheta - b}} \p*{\frac{n}{\log(p)}}^{1/2} \p*{\frac{\eta \sigma_e}{\tilde D}}^2.
  \]
We conclude by showing this is implied by our other conditions. As we have assumed that 
$\sigma_e p_{eff}^{-1/2} n^{(\kappa-1)/2} \gg n^{-1/2}\norm{A\stheta - b}$, substituting this upper bound in the denominator implies that this holds if
\[ p_{eff}^{1/2} \lesssim \frac{\sigma n^{(1-\kappa)/2}}{\sigma_e}\p*{\frac{n}{\log(p)}}^{1/2} \p*{\frac{\eta \sigma_e}{\tilde D}}^2,
\ \text{ i.e., }\ 
   p_{eff} \lesssim n^{1-\kappa}\p*{\frac{\sigma}{\sigma_e}}^2 \p*{\frac{n}{\log(p)}} \p*{\frac{\eta \sigma_e}{\tilde D}}^4. \]

The substitution of the stated singular value bound for the rank bound is justified by 
Theorem~\ref{theorem:rate-simplified}, which allows the substitution for $\rank(A)$ of any $R$ satisfying 
\[ \sigma_{R+1}(A) \le c(s+v \sigma p_{eff}^{-1/2}) R  / \width(\sTheta_s) \] 
As $s \ge 0$ and we take $\width(\sTheta_s)=c\sqrt{\log(p)}$, it follows that for large enough (constant) $v$,
this holds under the stated condition $\sigma_{R+1}(A) \le \sigma p_{eff}^{-1/2} R  / \sqrt{\log(p)}$.

\end{appendix}
\end{document}